\numberwithin{equation}{section}
\theoremstyle{plain}
\newtheorem{theorem}{Theorem}[section]
\newtheorem{lemma}[theorem]{Lemma}
\newtheorem{proposition}[theorem]{Proposition}
\newtheorem{corollary}[theorem]{Corollary}
\theoremstyle{definition}
\newtheorem{definition}[theorem]{Definition}
\newcommand{\N}{\mathbb{N}}
\newcommand{\R}{\mathbb{R}}
\newcommand{\e}{\mathrm{e}}
\newcommand{\eps}{\varepsilon}
\newcommand{\de}{\partial}
\newcommand{\weakto}{\rightharpoonup}
\newcommand{\weakstarto}{\stackrel{*}{\rightharpoonup}}
\newcommand{\Haus}[1]{\mathscr{H}^{#1}}
\newcommand{\Leb}[1]{\mathscr{L}^{#1}}
\newcommand{\redb}{\mathscr{F}} 
\newcommand{\loc}{{\mathrm{loc}}}
\newcommand{\NL}{{\mathrm{NL}}}
\newcommand{\M}{\mathscr{M}}
\newcommand{\di}{{\,\mathrm{d}}}
\newcommand{\sing}{{\mathrm{s}}}
\newcommand{\ass}{\mathrm{ac}}
\newcommand{\eff}{{\mathrm{e}}}
\newcommand{\rap}{\mathscr{R}}
\renewcommand{\phi}{\varphi}
\renewcommand{\rho}{\varrho}
\renewcommand{\theta}{\vartheta}
\renewcommand{\div}{\mathrm{div}} 
\DeclareMathOperator{\supp}{supp}
\DeclareMathOperator{\tang}{Tan}
\DeclareMathOperator{\Lip}{Lip}
\DeclareMathOperator{\esssup}{ess\,sup}
\DeclareMathOperator{\dimension}{dim}
\DeclarePairedDelimiter{\set}{\{}{\}}
\newcommand{\mres}{\mathbin{\vrule height 1.6ex depth 0pt width
0.13ex\vrule height 0.13ex depth 0pt width 1.3ex}}
\newcommand*{\mint}[1]{%
  % #1: overlay symbol
  \mint@l{#1}{}%
}
\newcommand*{\mint@l}[2]{%
  % #1: overlay symbol
  % #2: limits
  \@ifnextchar\limits{%
    \mint@l{#1}%
  }{%
    \@ifnextchar\nolimits{%
      \mint@l{#1}%
    }{%
      \@ifnextchar\displaylimits{%
        \mint@l{#1}%
      }{%
        \mint@s{#2}{#1}%
      }%
    }%
  }%
}
\newcommand*{\mint@s}[2]{%
  % #1: limits
  % #2: overlay symbol
  \@ifnextchar_{%
    \mint@sub{#1}{#2}%
  }{%
    \@ifnextchar^{%
      \mint@sup{#1}{#2}%
    }{%
      \mint@{#1}{#2}{}{}%
    }%
  }%
}
\def\mint@sub#1#2_#3{%
  \@ifnextchar^{%
    \mint@sub@sup{#1}{#2}{#3}%
  }{%
    \mint@{#1}{#2}{#3}{}%
  }%
}
\def\mint@sup#1#2^#3{%
  \@ifnextchar_{%
    \mint@sup@sub{#1}{#2}{#3}%
  }{%
    \mint@{#1}{#2}{}{#3}%
  }%
}
\def\mint@sub@sup#1#2#3^#4{%
  \mint@{#1}{#2}{#3}{#4}%
}
\def\mint@sup@sub#1#2#3_#4{%
  \mint@{#1}{#2}{#4}{#3}%
}
\newcommand*{\mint@}[4]{%
  % #1: \limits, \nolimits, \displaylimits
  % #2: overlay symbol: -, =, ...
  % #3: subscript
  % #4: superscript
  \mathop{}%
  \mkern-\thinmuskip
  \mathchoice{%
    \mint@@{#1}{#2}{#3}{#4}%
        \displaystyle\textstyle\scriptstyle
  }{%
    \mint@@{#1}{#2}{#3}{#4}%
        \textstyle\scriptstyle\scriptstyle
  }{%
    \mint@@{#1}{#2}{#3}{#4}%
        \scriptstyle\scriptscriptstyle\scriptscriptstyle
  }{%
    \mint@@{#1}{#2}{#3}{#4}%
        \scriptscriptstyle\scriptscriptstyle\scriptscriptstyle
  }%
  \mkern-\thinmuskip
  \int#1%
  \ifx\\#3\\\else_{#3}\fi
  \ifx\\#4\\\else^{#4}\fi  
}
\newcommand*{\mint@@}[7]{%
  % #1: limits
  % #2: overlay symbol
  % #3: subscript
  % #4: superscript
  % #5: math style
  % #6: math style for overlay symbol
  % #7: math style for subscript/superscript
  \begingroup
    \sbox0{$#5\int\m@th$}%
    \sbox2{$#5\int_{}\m@th$}%
    \dimen2=\wd0 %
    % => \dimen2 = width of \int
    \let\mint@limits=#1\relax
    \ifx\mint@limits\relax
      \sbox4{$#5\int_{\kern1sp}^{\kern1sp}\m@th$}%
      \ifdim\wd4>\wd2 %
        \let\mint@limits=\nolimits
      \else
        \let\mint@limits=\limits
      \fi
    \fi
    \ifx\mint@limits\displaylimits
      \ifx#5\displaystyle
        \let\mint@limits=\limits
      \fi
    \fi
    \ifx\mint@limits\limits
      \sbox0{$#7#3\m@th$}%
      \sbox2{$#7#4\m@th$}%
      \ifdim\wd0>\dimen2 %
        \dimen2=\wd0 %
      \fi
      \ifdim\wd2>\dimen2 %
        \dimen2=\wd2 %
      \fi
    \fi
    \rlap{%
      $#5%
        \vcenter{%
          \hbox to\dimen2{%
            \hss
            $#6{#2}\m@th$%
            \hss
          }%
        }%
      $%
    }%
  \endgroup
}
\begin{document}

\title[On sets with finite distributional fractional perimeter]{On sets with finite distributional fractional perimeter}

\author[G.~E.~Comi]{Giovanni E. Comi}
\address[G.~E.~Comi]{Dipartimento di Matematica, Università di Bologna, Piazza di Porta San Donato 5, 40126 Bologna (BO), Italy}
\email{giovannieugenio.comi@unibo.it}

\author[G.~Stefani]{Giorgio Stefani}
\address[G.~Stefani]{Scuola Internazionale Superiore di Studi Avanzati (SISSA), via Bonomea~265, 34136 Trieste (TS), Italy}
\email{gstefani@sissa.it {\normalfont or} giorgio.stefani.math@gmail.com}

\date{\today}

\begin{abstract}
We continue the study of the fine properties of sets having locally finite distributional fractional perimeter. 
We refine the characterization of their blow-ups and prove a Leibniz rule for the intersection of sets with locally finite distributional fractional perimeter with sets with finite fractional perimeter.
As a byproduct, we provide a description of non-local boundaries associated with the distributional fractional perimeter.
\end{abstract}

\keywords{Fractional gradient, fractional perimeter, Blow-up Theorem, Leibniz rule, non-local boundary}

\subjclass[2020]{Primary 26B30. Secondary 28A75}

\thanks{\textit{Acknowledgments}.
The authors thank the anonymous referees for several precious comments.
The authors are members of the Istituto Nazionale di Alta Matematica (INdAM), Gruppo Nazionale per l'Analisi Matematica, la Probabilità e le loro Applicazioni (GNAMPA).
The authors are partially supported by the INdAM--GNAMPA 2023 Project \textit{Problemi variazionali per funzionali e operatori non-locali}, codice CUP\_E53\-C22\-001\-930\-001.
The first-named author was partially supported by the INdAM--GNAMPA 2022 Project \textit{Alcuni problemi associati a funzionali integrali: riscoperta strumenti classici e nuovi sviluppi}, codice CUP\_E55\-F22\-000\-270\-001, and has received funding from the MIUR PRIN 2017 Project \textit{Gradient Flows, Optimal Transport and Metric Measure Structures}.
The second-named author was partially supported by the INdAM--GNAMPA 2022 Project \textit{Analisi geometrica in strutture subriemanniane}, codice CUP\_E55\-F22\-000\-270\-001, and has received funding from the European Research Council (ERC) under the European Union’s Horizon 2020 research and innovation program (grant agreement No.~945655).
}

\maketitle

\section{Introduction}

\subsection{The distributional fractional perimeter}

Given $\alpha\in(0,1)$, we let 
\begin{equation}
\label{eq:def_nabla_alpha}
\nabla^\alpha f(x)
=
\mu_{n,\alpha}
\int_{\R^n}\frac{(f(y)-f(x))(y-x)}{|y-x|^{n+\alpha+1}}\di y,
\quad
x\in\R^n,
\end{equation}
be the \emph{fractional $\alpha$-gradient} of $f\in\Lip_c(\R^n)$ and, analogously, 
\begin{equation}
\label{eq:def_div_alpha}
\div^\alpha\phi(x)
=
\mu_{n,\alpha}
\int_{\R^n}\frac{(\phi(y)-\phi(x))\cdot(y-x)}{|y-x|^{n+\alpha+1}}\di y,
\quad
x\in\R^n,
\end{equation}  
be the \emph{fractional $\alpha$-divergence} of $\phi\in\Lip_c(\R^n;\R^n)$, where 
\begin{equation*}
\mu_{n,\alpha}
=
2^\alpha\pi^{-\frac n2}\,\frac{\Gamma\left(\frac{n+\alpha+1}2\right)}{\Gamma\left(\frac{1-\alpha}2\right)}
>0
\end{equation*}
is a renormalization constant.
The operators~\eqref{eq:def_nabla_alpha} and~\eqref{eq:def_div_alpha} are \emph{dual}, in the sense that they satisfy the  fractional integration-by-parts formula
\begin{equation}
\label{eq:frac_ibp}
\int_{\R^n}f\,\div^\alpha\phi\di x
=
-\int_{\R^n}\phi\cdot\nabla^\alpha f\di x.
\end{equation}
For a more detailed account on the operators in~\eqref{eq:def_nabla_alpha} and~\eqref{eq:def_div_alpha} and on the formula~\eqref{eq:frac_ibp}, we refer the reader to~\cite{Silhavy20}.
In our previous papers~\cites{Comi-Stefani19,Comi-Stefani22-A,Comi-Stefani22-L,Comi-et-al21,Brue-et-al20,Comi-Stefani23-Fail,Comi-Stefani23-Frac}, starting from formula~\eqref{eq:frac_ibp}, we developed a new theory of distributional fractional Sobolev and $BV$ spaces. 

In the present note, we continue the study of fractional $BV$ functions.
Let us introduce the following definition (see~\cite{Comi-et-al21}*{Sec.~3.1} for example). 
Given $p\in[1,+\infty]$ and an open set $\Omega\subset\R^n$, we say that $f\in BV^{\alpha,p}_{\loc}(\Omega)$ if $f\in L^p(\R^n)$ and 
\begin{equation*}
\sup\set*{\int_{\R^n}f\,\div^\alpha\phi\di x : \phi\in C^\infty_c(\R^n;\R^n),\ \|\phi\|_{L^\infty(\R^n;\,\R^n)}\le 1,\ \supp\phi\subset A}
<+\infty
\end{equation*}
for any open set  $A\Subset\Omega$.
A simple application of Riesz's Representation Theorem (see~\cite{Comi-et-al21}*{Th.~3}) yields that $f\in BV^{\alpha,p}_\loc(\Omega)$ if and only if $f \in L^p(\R^n)$ and there is a vector-valued Radon measure $D^\alpha f\in\M_{\loc}(\Omega;\R^n)$, called \textit{fractional $\alpha$-variation measure} of $f$, such that 
\begin{equation}
\label{eq:ibp_BV_frac}
\int_{\R^n} f\div^\alpha\phi\di x
=
-\int_{\Omega}\phi \cdot \di D^\alpha f
\end{equation}
for all $\phi\in C^\infty_c(\R^n;\R^n)$ with $\supp\phi\subset\Omega$, with 
\begin{equation*}
|D^\alpha f|(A)
=
\sup\set*{\int_{\R^n}f\,\div^\alpha\phi\di x : \phi\in C^\infty_c(\R^n;\R^n),\ \|\phi\|_{L^\infty(\R^n;\,\R^n)}\le 1,\ \supp\phi\subset A}
\end{equation*}
for any open set $A\Subset\Omega$. 
If $|D^\alpha f|(\Omega)<+\infty$, then we write $f\in BV^{\alpha,p}(\Omega)$. 
We warn the reader that the subscript `$\loc$' in $BV^{\alpha,p}_\loc$ always refers to the local finiteness of the fractional variation measure only, as $BV^{\alpha,p}_\loc$ functions are in $L^p(\R^n)$ by default.

If $\chi_E\in BV^{\alpha,\infty}_\loc(\R^n)$, then the measure $|D^\alpha\chi_E|\in\M_\loc(\R^n)$ is called the \emph{distributional fractional} (\emph{Caccioppoli}) \emph{$\alpha$-perimeter} of $E\subset\R^n$ (see~\cite{Comi-Stefani19}*{Def.~4.1}). 
We recall that $W^{\alpha,1}(\R^n)\subset BV^{\alpha,1}(\R^n)$ with strict inclusion, see~\cite{Comi-Stefani19}*{Ths.~3.18 and~3.31}, so $|D^\alpha\chi_E|$ must not be confused with the \emph{fractional $\alpha$-perimeter} $P_\alpha(E;\,\cdot\,)$ relative to $W^{\alpha,1}$ sets, defined as
\begin{equation}
\label{eq:def_frac_per}
P_\alpha(E;\Omega)
=
\int_{(\R^n\times\R^n)\setminus(\Omega^c\times\Omega^c)}\frac{|\chi_E(x)-\chi_E(y)|}{|x-y|^{n+\alpha}}\di x\di y
\end{equation}
for $E,\Omega\subset\R^n$ (in particular, if $\Omega=\R^n$, then $P_\alpha(E)=P_\alpha(E;\R^n)=[\chi_E]_{W^{\alpha,1}(\R^n)}$).
In fact, if $P_\alpha(E;\Omega)<+\infty$, then $\chi_E\in BV^{\alpha,\infty}(\Omega)$ with $D^\alpha \chi_E = \nabla^\alpha \chi_E\,\Leb{n}$ and $\|\nabla^\alpha \chi_E\|_{L^1(\Omega;\,\R^n)}\le\mu_{n,\alpha}P_\alpha(E;\Omega)$, see~\cite{Comi-Stefani19}*{Prop.~4.8}, but currently we do not know if there exists $\chi_E\in BV^{\alpha,\infty}(\Omega)$ such that $P_\alpha(E;\Omega)=+\infty$.

Mimicking the classical theory (see~\cite{Comi-Stefani19}*{Sec.~4.5}), given $\chi_E\in BV^{\alpha,\infty}_\loc(\R^n)$, we say that $x\in\R^n$ belongs to the \emph{fractional reduced boundary} of $E$, and we write $x\in \redb^\alpha E$, if 
\begin{equation*}
x\in \supp |D^\alpha\chi_E|
\quad
\text{and}
\quad
\exists
\lim_{r\to0^+}
\frac{D^\alpha\chi_E(B_r(x))}{|D^\alpha\chi_E|(B_r(x))}
\in\mathbb S^{n-1}.
\end{equation*} 
Consequently, we let $\nu^\alpha_E\colon\redb^\alpha E\to\mathbb S^{n-1}$,
\begin{equation*}
\nu^\alpha_E(x)
=
\lim_{r\to0^+}
\frac{D^\alpha\chi_E(B_r(x))}{|D^\alpha\chi_E|(B_r(x))},
\quad
x\in\redb^\alpha E,
\end{equation*} 
be the \emph{(measure theoretic) inner unit fractional normal} of~$E$. Hence, \eqref{eq:ibp_BV_frac} implies that
\begin{equation}
\label{eq:frac_ibp_BV_sets}
\int_E\div^\alpha\phi\di x
=
-
\int_{\redb^\alpha E}\phi \cdot \nu^\alpha_E \, \di|D^\alpha \chi_E|
\end{equation} 
for all $\phi\in C^\infty_c(\R^n;\R^n)$.

\subsection{Leibniz rules}

A large part of our preceding works~\cites{Comi-Stefani19,Comi-Stefani22-A,Comi-Stefani22-L,Comi-et-al21,Brue-et-al20,Comi-Stefani23-Fail,Comi-Stefani23-Frac} is dedicated to the study of fractional Leibniz rules involving the operators~\eqref{eq:def_nabla_alpha} and~\eqref{eq:def_div_alpha}.
For $f,g\in\Lip_c(\R^n)$ and $\phi\in\Lip_c(\R^n;\R^n)$, we have 
\begin{equation}
\label{eq:nabla_prod}
\nabla^\alpha(fg)
=
f\,\nabla^\alpha g
+
g\,\nabla^\alpha f
+
\nabla^\alpha_\NL(f,g)
\end{equation}
and, analogously,
\begin{equation}
\label{eq:div_prod}
\div^\alpha_\NL(f\phi)
=
f\,\div^\alpha\phi
+
\phi\cdot\nabla^\alpha f
+
\div^\alpha_\NL(f,\phi),
\end{equation}
where 
\begin{equation}
\label{eq:def_nabla_NL}
\nabla^\alpha_\NL(f,g)(x)
=
\mu_{n,\alpha}\int_{\R^n}\frac{(f(y)-f(x))(g(y)-g(x))(y-x)}{|y-x|^{n+\alpha+1}}\di y,
\quad
x\in\R^n,
\end{equation}
and 
\begin{equation}
\label{eq:def_div_NL}
\div^\alpha_\NL(f,\phi)(x)
=
\mu_{n,\alpha}\int_{\R^n}\frac{(f(y)-f(x))(\phi(y)-\phi(x))\cdot(y-x)}{|y-x|^{n+\alpha+1}}\di y,
\quad
x\in\R^n,
\end{equation}
are the \emph{non-local fractional $\alpha$-gradient} and the \emph{non-local fractional $\alpha$-divergence}, respectively.
The fractional Leibniz rules~\eqref{eq:nabla_prod} and~\eqref{eq:div_prod}, as well as the  operators~\eqref{eq:def_nabla_NL} and~\eqref{eq:def_div_NL}, can be extended to less regular functions and vector fields in several ways, see~\cites{Comi-Stefani19,Comi-Stefani22-A,Comi-Stefani22-L,Comi-et-al21,Brue-et-al20,Comi-Stefani23-Fail,Comi-Stefani23-Frac}.

The first main aim of the present note is to achieve some new fractional Leibniz rules.
On the one side, we provide the following product rule for the intersection of $BV^{\alpha,\infty}_\loc$ sets with $W^{\alpha,1}$ sets,  generalizing~\cite{Comi-Stefani22-L}*{Th.~1.1}.
In fact, the locality property~\eqref{eq:schonberger} was inspired by an observation recently made in~\cite{Schonberger22}*{Rem.~3.4}. 
Here and in the following, $D^\alpha_\sing f$ denotes the singular part of the fractional variation measure $D^\alpha f$ of $f\in BV^{\alpha,p}_\loc(\R^n)$.
We also let 
\begin{equation*}
u^\star(x)
=
\lim_{r\to0^+}
\mint{-}_{B_r(x)} u(y)\di y,
\quad
x\in\R^n,
\end{equation*}
be the \emph{precise representative} of $u\in L^1_\loc(\R^n)$, whenever the limit exists in~$\R$.
We hence define the Borel set
\begin{equation*}
\rap_u
=
\set*{
x\in\R^n : u^\star(x)\ \text{exists in}\ \R
}.
\end{equation*}
Let us recall that $x\in\R^n$ is a \emph{Lebesgue point} of $u$ if $x\in\rap_u$ and 
\begin{equation*}
\lim_{r\to0^+}
\mint{-}_{B_r(x)} |u(y) - u^\star(x)|\di y = 0.
\end{equation*}
We note that, if $E\subset\R^n$ is a measurable set and $x\in\R^n$ is a Lebesgue point of $\chi_E$, then $\chi_E^\star(x) = \chi_{E^1}(x)$.
Here and below, for $t\in[0,1]$, we let
\begin{equation}
\label{eq:def_density_t}
E^{t}
=
\set*{x\in \R^{n} : \exists\lim_{r \to 0^+} 
\frac{|E \cap B_{r}(x)|}{|B_{r}(x)|} = t}.
\end{equation} 

\begin{theorem}[Intersection with $W^{\alpha,1}$ set]
\label{res:cap_W}
If $\chi_E\in BV^{\alpha,\infty}_\loc(\R^n)$ and $P_\alpha(F)<+\infty$, then $\chi_{E\cap F}\in BV^{\alpha,\infty}_\loc(\R^n)$, with
\begin{equation}
\label{eq:cap_BV_leibniz}
D^\alpha\chi_{E\cap F}
=
\chi_{F^1} D^\alpha\chi_E
+
\chi_E\nabla^\alpha\chi_F\,\Leb{n}
+
\nabla^\alpha_\NL(\chi_E,\chi_F)\,\Leb{n}
\quad
\text{in}\ 
\M_\loc(\R^n;\R^n),
\end{equation}
\begin{equation}
\label{eq:cap_BV_L1_part_est}
\max\set*{
\|\chi_E\nabla^\alpha\chi_F\|_{L^1(\R^n;\,\R^n)}
,
\|\nabla^\alpha_\NL(\chi_E,\chi_F)\|_{L^1(\R^n;\,\R^n)}
}
\le
\mu_{n,\alpha} P_\alpha(F) 
\end{equation}
and 
\begin{equation} \label{eq:zero_average_nabla_NL_E_F}
\int_{\R^n} \nabla^\alpha_\NL(\chi_E,\chi_F) \di x = 0.
\end{equation}
Consequently, we have 
\begin{equation} \label{eq:diff_grad_meas_1}
D^\alpha\chi_{E\cap F}
-
\chi_{F^1} D^\alpha\chi_E
\in
\M(\R^n;\R^n),
\end{equation} 
\begin{equation} \label{eq:diff_grad_meas_2}
|D^\alpha\chi_{E\cap F}
-
\chi_{F^1} D^\alpha\chi_E|
(\R^n)
\le
\mu_{n,\alpha}P_\alpha(F)
\end{equation} 
and
\begin{equation}
\label{eq:schonberger}
D^\alpha_\sing\chi_{E\cap F}
=
\chi_{F^1} D^\alpha_\sing\chi_E
\quad
\text{in}\
\M_\loc(\R^n;\R^n).
\end{equation}
In addition, if $F$ is also  bounded, then $\chi_{E\cap F}\in BV^{\alpha,\infty}(\R^n) \cap L^1(\R^n)$ and 
\begin{equation} \label{eq:GG_fract}
\int_{F^1} \, \di D^\alpha\chi_E = - \int_{E} \nabla^\alpha\chi_F\,\di x.
\end{equation}
\end{theorem}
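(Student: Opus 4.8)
The strategy is to compute $\div^\alpha_\NL(\chi_{E\cap F}\phi)$ for a test field $\phi\in C^\infty_c(\R^n;\R^n)$ and to recognize each term of~\eqref{eq:cap_BV_leibniz} by testing against the definition~\eqref{eq:ibp_BV_frac}. The first step is to record the pointwise algebraic identity for the product of two characteristic functions: writing $\chi_{E\cap F}=\chi_E\chi_F$ and using the Leibniz rule~\eqref{eq:div_prod} twice (first splitting off $\chi_F$, then treating the field $\chi_E\phi$), one obtains
\begin{equation*}
\int_{E\cap F}\div^\alpha\phi\di x
=
\int_{\R^n}\chi_F\left(\chi_E\,\div^\alpha\phi+\phi\cdot\nabla^\alpha\chi_E+\div^\alpha_\NL(\chi_E,\phi)\right)\di x
\end{equation*}
after a Fubini symmetrization, and then one has to re-expand $\int_{\R^n}\chi_F\chi_E\,\div^\alpha\phi\di x$ using~\eqref{eq:frac_ibp_BV_sets} for $E$ tested against the non-smooth field $\chi_F\phi$, which must be justified by a density/mollification argument since $\chi_E\in BV^{\alpha,\infty}_\loc$ and $\chi_F\in W^{\alpha,1}$ guarantee enough integrability (this is exactly the content of~\cite{Comi-Stefani22-L}*{Th.~1.1}, which we are generalizing). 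Matching terms and using the pointwise identity $\div^\alpha_\NL(\chi_E,\chi_F)=\nabla^\alpha_\NL(\chi_E,\chi_F)\cdot(\text{the transpose structure})$, i.e.\ the scalar/vector duality between~\eqref{eq:def_nabla_NL} and~\eqref{eq:def_div_NL}, yields~\eqref{eq:cap_BV_leibniz}.

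For the bounds~\eqref{eq:cap_BV_L1_part_est}, I would estimate $|\chi_E(x)\nabla^\alpha\chi_F(x)|\le|\nabla^\alpha\chi_F(x)|$ pointwise and integrate, using that $\|\nabla^\alpha\chi_F\|_{L^1(\R^n;\R^n)}\le\mu_{n,\alpha}P_\alpha(F)$ by~\cite{Comi-Stefani19}*{Prop.~4.8}; for $\nabla^\alpha_\NL(\chi_E,\chi_F)$ I would dominate the integrand in~\eqref{eq:def_nabla_NL} by $\mu_{n,\alpha}|\chi_F(y)-\chi_F(x)|\,|y-x|^{-n-\alpha}$ (since $|\chi_E(y)-\chi_E(x)|\le1$) and apply Tonelli together with the definition~\eqref{eq:def_frac_per} with $\Omega=\R^n$. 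The vanishing-average identity~\eqref{eq:zero_average_nabla_NL_E_F} follows from Fubini: the integrand is antisymmetric in $(x,y)$ (swapping $x\leftrightarrow y$ changes the sign of $(y-x)$ while leaving the product $(f(y)-f(x))(g(y)-g(x))$ invariant), so $\int_{\R^n}\nabla^\alpha_\NL(\chi_E,\chi_F)\di x=0$ once absolute integrability is in hand, which it is by the previous estimate. Then~\eqref{eq:diff_grad_meas_1} and~\eqref{eq:diff_grad_meas_2} are immediate from~\eqref{eq:cap_BV_leibniz} and~\eqref{eq:cap_BV_L1_part_est}, since the difference $D^\alpha\chi_{E\cap F}-\chi_{F^1}D^\alpha\chi_E$ equals $(\chi_E\nabla^\alpha\chi_F+\nabla^\alpha_\NL(\chi_E,\chi_F))\,\Leb n$, an $L^1$ vector field of total mass at most $2\mu_{n,\alpha}P_\alpha(F)$; and~\eqref{eq:schonberger} follows by taking singular parts, the absolutely continuous $L^1$ correction contributing nothing to $D^\alpha_\sing$ while $\chi_{F^1}$ is a Borel function with values in $\{0,1\}$, so multiplication by it commutes with the Lebesgue decomposition.

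For the last assertion, when $F$ is bounded one has $\chi_F\in L^1(\R^n)$, hence $\chi_{E\cap F}\in L^1(\R^n)\cap L^\infty(\R^n)$; combined with the already established~\eqref{eq:diff_grad_meas_1}–\eqref{eq:diff_grad_meas_2} and the fact that $\chi_{F^1}D^\alpha\chi_E$ is a finite measure (because $\chi_{F^1}$ is supported, up to $\Leb n$-null sets, in the bounded set $\overline F$ and $D^\alpha\chi_E$ is locally finite), we get $D^\alpha\chi_{E\cap F}\in\M(\R^n;\R^n)$, i.e.\ $\chi_{E\cap F}\in BV^{\alpha,\infty}(\R^n)$. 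Finally,~\eqref{eq:GG_fract} is obtained by testing~\eqref{eq:frac_ibp_BV_sets} for $E$ against a field that approximates $\phi\equiv\mathbf{1}$ on a large ball (licit since $D^\alpha\chi_E$ restricted to $\overline F$ is finite and $\nabla^\alpha\chi_F\in L^1$), or more directly by integrating~\eqref{eq:frac_ibp} with $f=\chi_F$ against $\nabla^\alpha\chi_E$ after noting $\int_{\R^n}\chi_F\,\div^\alpha\phi\to$ the appropriate limit; one identifies $\int_{\R^n}\chi_E\,\div^\alpha(\chi_F\,\cdot\,)=\int_{F^1}\di D^\alpha\chi_E$ from~\eqref{eq:cap_BV_leibniz} evaluated in the limit $\phi\to\mathbf 1$ together with~\eqref{eq:zero_average_nabla_NL_E_F}, which kills the non-local term, and $\int_{\R^n}\chi_E\nabla^\alpha\chi_F\di x=\int_E\nabla^\alpha\chi_F\di x$ trivially.

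The main obstacle I anticipate is the \emph{justification of testing~\eqref{eq:frac_ibp_BV_sets} against the non-smooth vector field $\chi_F\phi$}: one cannot directly plug $\chi_F\phi\notin C^\infty_c$ into the integration-by-parts formula, so a careful mollification $\chi_F*\varrho_\eps$ is needed, and one must check that $\nabla^\alpha(\chi_F*\varrho_\eps)\to\nabla^\alpha\chi_F$ and $\div^\alpha_\NL(\chi_F*\varrho_\eps,\phi)\to\div^\alpha_\NL(\chi_F,\phi)$ in $L^1$, uniformly enough to pass to the limit in all the non-local double integrals; this is precisely where the hypothesis $P_\alpha(F)<+\infty$ (rather than merely $\chi_F\in BV^{\alpha,\infty}$) is used, since it provides the $W^{\alpha,1}$-type control~\eqref{eq:def_frac_per} that makes the relevant kernels integrable and the limits legitimate, following the scheme of~\cite{Comi-Stefani22-L}.
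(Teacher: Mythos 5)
Your overall route is the same as the paper's: expand $\div^\alpha(\chi_F\phi)$ by the Leibniz rule, integrate against $\chi_E$, dualize the non-local term via $\int_{\R^n}\chi_E\,\div^\alpha_\NL(\chi_F,\phi)\di x=\int_{\R^n}\phi\cdot\nabla^\alpha_\NL(\chi_E,\chi_F)\di x$, and justify testing $D^\alpha\chi_E$ against the non-smooth field $\chi_F\phi$ by mollification. The $L^1$ bounds, the Fubini/antisymmetry argument for \eqref{eq:zero_average_nabla_NL_E_F} (which is a clean, self-contained replacement for the lemma the paper cites there), and the deduction of \eqref{eq:diff_grad_meas_1}, \eqref{eq:diff_grad_meas_2} and \eqref{eq:schonberger} from \eqref{eq:cap_BV_leibniz} are all fine.

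There is, however, a genuine gap exactly at the point you flag as the main obstacle, and your proposed fix does not close it. You propose to verify that $\nabla^\alpha(\rho_\eps*\chi_F)\to\nabla^\alpha\chi_F$ and $\div^\alpha_\NL(\rho_\eps*\chi_F,\phi)\to\div^\alpha_\NL(\chi_F,\phi)$ in $L^1(\Leb{n})$. That handles the terms integrated against Lebesgue measure, but the delicate term is $\int_{\R^n}\rho_\eps*(\chi_F\phi)\cdot\di D^\alpha\chi_E$: since $D^\alpha\chi_E$ may have a nontrivial singular part, convergence in $L^1(\Leb{n})$ or $\Leb{n}$-a.e.\ says nothing about the limit of this integral. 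This is precisely where the precise representative $\chi_{F^1}$ in \eqref{eq:cap_BV_leibniz} originates, and your write-up never accounts for it; a naive passage to the limit would produce the ill-defined object ``$\chi_F\,D^\alpha\chi_E$''. The missing ingredients are (i) the $\Haus{n-\alpha}$-a.e.\ convergence of mollifications of $W^{\alpha,1}$ functions to their precise representative (\cite{Ponce-Spector20}*{Prop.~3.1}), and (ii) the absolute continuity $|D^\alpha\chi_E|\ll\Haus{n-\alpha}\mres\redb^\alpha E$ (\cite{Comi-Stefani19}*{Cor.~5.4}); together these give $\rho_\eps*(\chi_F\phi)\to\chi_{F^1}\phi$ pointwise $|D^\alpha\chi_E|$-a.e., whence the correct limit by dominated convergence. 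Without this step neither \eqref{eq:cap_BV_leibniz} nor the locality property \eqref{eq:schonberger} is established. Two smaller points: your first displayed identity is not correct as written (its two sides differ by $\int_{\R^n}\chi_F\phi\cdot\nabla^\alpha\chi_E\di x+\int_{\R^n}\chi_F\,\div^\alpha_\NL(\chi_E,\phi)\di x$), and it involves the pointwise object $\nabla^\alpha\chi_E$, which does not exist for a general $BV^{\alpha,\infty}_\loc$ set --- the Leibniz rule must be applied to $\chi_F\phi$ only, keeping $E$ solely through the measure $D^\alpha\chi_E$; and for \eqref{eq:GG_fract} the clean route, which you only gesture at, is to observe that $F$ bounded gives $\chi_{E\cap F}\in BV^{\alpha,1}(\R^n)$, hence $D^\alpha\chi_{E\cap F}(\R^n)=0$, and then to integrate \eqref{eq:cap_BV_leibniz} over $\R^n$ using \eqref{eq:zero_average_nabla_NL_E_F}.
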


We expect that \cref{res:cap_W} may be extended to $BV^{\alpha,\infty}_\loc$ (and even $BV^{\alpha,p}_\loc$) functions, but we do not pursue this direction here and leave it to future works.

On the other side, we generalize~\cite{Comi-Stefani22-L}*{Rem.~4.6}, see \cref{res:conditional_leibniz} below.
To state our result, we need to introduce some notation.
In analogy with \cite{Comi-Stefani22-L}*{Def.~4.5}, we exploit~\cite{Comi-Stefani22-L}*{Lem.~2.9} to define the measure version of the non-local fractional gradient~\eqref{eq:def_nabla_NL} (actually, we already used this object in~\cite{Comi-Stefani22-L}*{Th.~5.1} without providing its explicit definition).

\begin{definition}[Non-local fractional $\alpha$-gradient measure]
\label{def:weak_NL_grad}
Let  $p,q\in[1,+\infty]$ be such that $\frac1p+\frac1q\le1$.
Let $f\in L^p(\R^n)$ and $g\in L^q(\R^n)$. 
We say that $D^\alpha_{\NL}(f,g) \in \M_{\rm loc}(\R^n;\R^n)$ is a \emph{non-local fractional $\alpha$-gradient measure} of the pair $(f,g)$ if 
\begin{equation*}
\int_{\R^n}
f\,\div^\alpha_{\NL}(g,\varphi)
\di x
=
\int_{\R^n}
\varphi\cdot \di D^\alpha_{\NL}(f,g)
\quad 
\text{for all}\
\varphi\in C^\infty_c(\R^n;\R^n).
\end{equation*}
\end{definition}

Arguing as in~\cite{Comi-Stefani22-L}*{Sec.~4.4}, we can exploit~\cite{Comi-Stefani22-L}*{Cor.~2.7 and Lems.~2.9 and~2.10} to infer that \cref{def:weak_NL_grad} is well posed and that  $D^\alpha_{\NL} (f,g)$, if it exists, is unique and symmetric, and extends the operator~\eqref{eq:def_nabla_NL}.

Our second result on the Leibniz rule for $BV^{\alpha,\infty}_\loc$ functions can be stated as follows.

\begin{theorem}[Conditional Leibniz rule in $BV^{\alpha,\infty}_\loc$]
\label{res:conditional_leibniz}
If $f,g\in BV^{\alpha,\infty}_\loc(\R^n)$, then 
\begin{equation}
\label{eq:measure_NL_boh}
fg\in BV^{\alpha,\infty}_\loc(\R^n)
\iff
\exists \, D^\alpha_\NL(f,g)\in\M_\loc(\R^n;\R^n),
\end{equation}
and there exist $\bar f\in L^\infty(\R^n,|D^\alpha g|)$ and $\bar g\in L^\infty(\R^n,|D^\alpha f|)$, with 
\begin{equation}
\label{eq:repres_leibniz_BV_loc_bounds}
\|\bar f\|_{L^\infty(\R^n,|D^\alpha g|)}
\le 
\|f\|_{L^\infty(\R^n)}
\quad
\text{and}
\quad
\|\bar g\|_{L^\infty(\R^n,|D^\alpha f|)}
\le 
\|g\|_{L^\infty(\R^n)},
\end{equation} 
\begin{equation}
\label{eq:bar}
\bar f
=
f^\star\
\text{$|D^\alpha g|$-a.e.\ in $\rap_f$}
\quad
\text{and}
\quad
\bar g
=
g^\star\
\text{$|D^\alpha f|$-a.e.\ in $\rap_g$},
\end{equation} 
such that, provided that $fg\in BV^{\alpha,\infty}_\loc(\R^n)$,
\begin{equation}
\label{eq:conditional_leibniz}
D^\alpha(fg)
=
\bar f\,D^\alpha g
+
\bar g\,D^\alpha f
+
D^\alpha_\NL(f,g)
\quad
\text{in}\ \M_\loc(\R^n;\R^n).
\end{equation}
\end{theorem}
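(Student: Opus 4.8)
The plan is to argue by mollification, reducing everything to the pointwise fractional Leibniz rule~\eqref{eq:nabla_prod}, the definition of the non-local fractional $\alpha$-gradient measure, and the commutation between $\nabla^\alpha$ and convolution. Fix a radially symmetric mollifier $\rho$ and set $f_\eps=f*\rho_\eps$, $g_\eps=g*\rho_\eps$. Since $f,g\in L^\infty(\R^n)$, the functions $f_\eps,g_\eps$ are smooth and bounded by $\|f\|_{L^\infty(\R^n)}$, $\|g\|_{L^\infty(\R^n)}$, have bounded derivatives of every order, converge to $f$, $g$ in $L^1_\loc(\R^n)$, and, for a suitable choice of $\rho$, converge pointwise to $f^\star$ on $\rap_f$ and to $g^\star$ on $\rap_g$. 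Moreover, as in our preceding works~\cites{Comi-Stefani19,Comi-et-al21}, the fractional $\alpha$-gradient commutes with convolution, so $\nabla^\alpha f_\eps\,\Leb{n}=(D^\alpha f)*\rho_\eps$; in particular $\nabla^\alpha f_\eps\,\Leb{n}\weakstarto D^\alpha f$ in $\M_\loc(\R^n;\R^n)$ and $\nabla^\alpha f_\eps\,\Leb{n}$ is bounded in total variation on compact sets, and analogously for $g$. Finally, because $f_\eps$ and $g_\eps$ are smooth and bounded with bounded derivatives, all the integrals in~\eqref{eq:def_nabla_alpha}, \eqref{eq:def_nabla_NL} and~\eqref{eq:def_div_NL} are absolutely convergent, so the pointwise Leibniz rule
\begin{equation*}
\nabla^\alpha(f_\eps g_\eps)
=
f_\eps\,\nabla^\alpha g_\eps
+
g_\eps\,\nabla^\alpha f_\eps
+
\nabla^\alpha_\NL(f_\eps,g_\eps)
\end{equation*}
holds everywhere on $\R^n$.

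Next I would test this identity against an arbitrary $\phi\in C^\infty_c(\R^n;\R^n)$ and pass to the limit as $\eps\to0^+$ term by term. By fractional integration by parts (in its version for bounded smooth functions), the left-hand side gives $\int_{\R^n}\phi\cdot\nabla^\alpha(f_\eps g_\eps)\di x=-\int_{\R^n}f_\eps g_\eps\,\div^\alpha\phi\di x\to-\int_{\R^n}fg\,\div^\alpha\phi\di x$, since $\div^\alpha\phi\in L^1(\R^n)\cap L^\infty(\R^n)$ and $f_\eps g_\eps\to fg$ in $L^1_\loc(\R^n)$ with uniform $L^\infty$ bound. Since $g_\eps\,\nabla^\alpha f_\eps\,\Leb{n}$ and $f_\eps\,\nabla^\alpha g_\eps\,\Leb{n}$ are bounded in total variation on compact sets and $|g_\eps\,\nabla^\alpha f_\eps|\,\Leb{n}\le\|g\|_{L^\infty(\R^n)}\,(|D^\alpha f|*\rho_\eps)$, arguing as in~\cite{Comi-Stefani22-L}*{Sec.~4.4} one obtains that these measures converge in $\M_\loc(\R^n;\R^n)$, independently of the mollifier, to $\bar g\,D^\alpha f$ and $\bar f\,D^\alpha g$ for some $\bar g\in L^\infty(\R^n,|D^\alpha f|)$ and $\bar f\in L^\infty(\R^n,|D^\alpha g|)$ satisfying the bounds~\eqref{eq:repres_leibniz_BV_loc_bounds}; the identifications~\eqref{eq:bar} then follow from the pointwise convergences $f_\eps\to f^\star$ on $\rap_f$, $g_\eps\to g^\star$ on $\rap_g$ together with a differentiation argument. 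Lastly, by the duality of the smooth non-local operators (see~\cite{Comi-Stefani22-L}*{Cor.~2.7}) one has $\int_{\R^n}\phi\cdot\nabla^\alpha_\NL(f_\eps,g_\eps)\di x=\int_{\R^n}f_\eps\,\div^\alpha_\NL(g_\eps,\phi)\di x$, and since $\div^\alpha_\NL(g_\eps,\phi)\to\div^\alpha_\NL(g,\phi)$ (cf.~\cite{Comi-Stefani22-L}*{Lem.~2.9}) with a uniform bound and decay while $f_\eps\to f$ in $L^1_\loc(\R^n)$ with uniform $L^\infty$ bound, this term converges to $\int_{\R^n}f\,\div^\alpha_\NL(g,\phi)\di x$. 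Collecting these limits yields
\begin{equation*}
-\int_{\R^n}fg\,\div^\alpha\phi\di x
=
\int_{\R^n}\phi\cdot\bar f\di D^\alpha g
+
\int_{\R^n}\phi\cdot\bar g\di D^\alpha f
+
\int_{\R^n}f\,\div^\alpha_\NL(g,\phi)\di x
\end{equation*}
for every $\phi\in C^\infty_c(\R^n;\R^n)$.

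From this identity both implications in~\eqref{eq:measure_NL_boh} and the formula~\eqref{eq:conditional_leibniz} follow. If $fg\in BV^{\alpha,\infty}_\loc(\R^n)$, then the left-hand side equals $\int_{\R^n}\phi\cdot\di D^\alpha(fg)$ by~\eqref{eq:ibp_BV_frac}, so the functional $\phi\mapsto\int_{\R^n}f\,\div^\alpha_\NL(g,\phi)\di x$ is represented by the measure $D^\alpha(fg)-\bar f\,D^\alpha g-\bar g\,D^\alpha f\in\M_\loc(\R^n;\R^n)$, which by \cref{def:weak_NL_grad} is precisely $D^\alpha_\NL(f,g)$; rearranging gives~\eqref{eq:conditional_leibniz}. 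Conversely, if $D^\alpha_\NL(f,g)\in\M_\loc(\R^n;\R^n)$ exists, then $\int_{\R^n}f\,\div^\alpha_\NL(g,\phi)\di x=\int_{\R^n}\phi\cdot\di D^\alpha_\NL(f,g)$, so the displayed identity reads $-\int_{\R^n}fg\,\div^\alpha\phi\di x=\int_{\R^n}\phi\cdot\di\big(\bar f\,D^\alpha g+\bar g\,D^\alpha f+D^\alpha_\NL(f,g)\big)$ for every test field; since $fg\in L^\infty(\R^n)$ and the measure on the right belongs to $\M_\loc(\R^n;\R^n)$, the Riesz-type characterization recalled in the Introduction (see~\cite{Comi-et-al21}*{Th.~3}) yields $fg\in BV^{\alpha,\infty}_\loc(\R^n)$ together with~\eqref{eq:conditional_leibniz}. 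I expect the main obstacle to be the analysis of the products $g_\eps\,\nabla^\alpha f_\eps\,\Leb{n}$ and $f_\eps\,\nabla^\alpha g_\eps\,\Leb{n}$: one must show that these sequences of measures converge in full (not merely along subsequences and independently of the mollifier), that each limit is absolutely continuous with respect to $|D^\alpha f|$, resp. $|D^\alpha g|$, with density bounded by $\|g\|_{L^\infty(\R^n)}$, resp. $\|f\|_{L^\infty(\R^n)}$, and --- the most delicate point --- that this density agrees $|D^\alpha g|$-a.e., resp. $|D^\alpha f|$-a.e., with the precise representative wherever the latter exists; this is exactly where the technical lemmas and constructions of~\cite{Comi-Stefani22-L} enter decisively.
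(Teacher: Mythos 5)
Your strategy is essentially the paper's: mollify, apply the smooth fractional Leibniz rule, test against $\phi\in C^\infty_c(\R^n;\R^n)$, pass to the limit using the commutation $\nabla^\alpha(\rho_\eps*f)=\rho_\eps*D^\alpha f$ and the duality/symmetry of $\div^\alpha_\NL$, and read off both implications of~\eqref{eq:measure_NL_boh} from the resulting distributional identity. The only cosmetic differences are that the paper mollifies just one factor and works with the divergence-form rule for $\div^\alpha(g_\eps\phi)$, whereas you mollify both and use the gradient-form rule; this changes nothing essential.

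Two remarks on the step you flag as ``the main obstacle.'' First, you set the bar higher than the theorem requires: since the statement only asserts the \emph{existence} of some $\bar f\in L^\infty(\R^n,|D^\alpha g|)$ and $\bar g\in L^\infty(\R^n,|D^\alpha f|)$ with the stated properties, there is no need to prove full convergence of $f_\eps\,\nabla^\alpha g_\eps\,\Leb{n}$ independently of the mollifier. It suffices to note that $(f_\eps)$ is uniformly bounded in $L^\infty(\R^n,|D^\alpha g|)$ and to extract a weak$^*$ convergent subsequence by Banach--Alaoglu (the limits of the remaining terms are identified unconditionally, so the identity holds for the subsequential limits $\bar f,\bar g$); the bound~\eqref{eq:repres_leibniz_BV_loc_bounds} is then lower semicontinuity of the norm. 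Second, the pointwise convergence $(\rho_\eps*f)(x)\to f^\star(x)$ for $x\in\rap_f$, which you invoke to get~\eqref{eq:bar}, is not automatic: a point of $\rap_f$ need not be a Lebesgue point, so one genuinely needs the radial symmetry of $\rho$ and a Cavalieri-type computation reducing $\rho_\eps*f(x)$ to a superposition of ball averages (this is the paper's \cref{lem:pointwise_limit_convolution}). With these two points supplied, your argument closes.
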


At the present moment, we do not know if the measure $D^\alpha_\NL(f,g)$ is well defined even in the case $f,g\in BV^{\alpha,\infty}(\R^n)\cap L^1(\R^n)$ with $|D^\alpha f|,|D^\alpha g|\ll\Leb{n}$, see~\cite{Comi-Stefani22-L}*{Rem.~4.6}.
However, in the simplest case $f=g=\chi_E\in BV^{\alpha,\infty}_\loc(\R^n)$, we have the following result.

\begin{corollary}[The measure $D^\alpha_\NL(\chi_E,\chi_E)$]
\label{res:chichi}
If $\chi_E\in BV^{\alpha,\infty}_\loc(\R^n)$, then
$D^\alpha_\NL(\chi_E, \chi_E) \in \M_\loc(\R^n; \R^n)$ is well defined and satisfies
\begin{equation} \label{eq:NL_grad_chi_E_repr}
D^\alpha_{\rm NL}(\chi_E, \chi_E) = (1 - 2\, \overline{\chi_E})\, D^\alpha \chi_E 
\quad 
\text{in}\ \M_{\loc}(\R^n; \R^n),
\end{equation}
where 
$\overline{\chi_E} \in L^\infty(\R^n, |D^\alpha \chi_E|)$, 
with 
\begin{equation}
\label{eq:bar_E}
0\le\overline{\chi_E}\le 1
\quad 
\text{and}
\quad 
\overline{\chi_E}=\chi_E^\star
\quad  
\text{$|D^\alpha\chi_E|$-a.e.\ in~$\rap_{\chi_E}$}.
\end{equation}
In addition, if $\chi_E \in BV^{\alpha, 1}(\R^n)$, then 
\begin{equation} \label{eq:zero_measure_R_n_NL}
D^\alpha_{\rm NL}(\chi_E, \chi_E) (\R^n) = \int_{\R^n} \overline{\chi_E} \di D^\alpha \chi_E = 0.
\end{equation}
\end{corollary}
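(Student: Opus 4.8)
The plan is to derive everything from \cref{res:conditional_leibniz} applied with $f=g=\chi_E$. Since $\chi_E\cdot\chi_E=\chi_E\in BV^{\alpha,\infty}_\loc(\R^n)$, the equivalence~\eqref{eq:measure_NL_boh} immediately gives that $D^\alpha_\NL(\chi_E,\chi_E)\in\M_\loc(\R^n;\R^n)$ exists, and it is unique (and symmetric) by the discussion following \cref{def:weak_NL_grad}. Denoting by $\bar f,\bar g\in L^\infty(\R^n,|D^\alpha\chi_E|)$ the representatives given by \cref{res:conditional_leibniz} and inserting $fg=\chi_E^2=\chi_E$ into~\eqref{eq:conditional_leibniz}, we obtain $D^\alpha\chi_E=(\bar f+\bar g)\,D^\alpha\chi_E+D^\alpha_\NL(\chi_E,\chi_E)$ in $\M_\loc(\R^n;\R^n)$, so that $\overline{\chi_E}:=\tfrac12(\bar f+\bar g)$ satisfies~\eqref{eq:NL_grad_chi_E_repr}. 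Moreover~\eqref{eq:bar} yields $\overline{\chi_E}=\chi_E^\star$ $|D^\alpha\chi_E|$-a.e.\ in $\rap_{\chi_E}$, while~\eqref{eq:repres_leibniz_BV_loc_bounds} yields $|\overline{\chi_E}|\le1$ $|D^\alpha\chi_E|$-a.e.

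To improve $|\overline{\chi_E}|\le1$ to $0\le\overline{\chi_E}\le1$ in~\eqref{eq:bar_E}, I would repeat the construction for $E^c$. First one checks that $\chi_{E^c}=1-\chi_E\in BV^{\alpha,\infty}_\loc(\R^n)$ with $D^\alpha\chi_{E^c}=-D^\alpha\chi_E$, which reduces to the identity $\int_{\R^n}\div^\alpha\phi\di x=0$ for $\phi\in C^\infty_c(\R^n;\R^n)$; this follows by testing~\eqref{eq:frac_ibp} with a fixed cut-off $\eta\in C^\infty_c(\R^n)$, $\eta\equiv1$ near the origin, rescaled as $\eta(\cdot/R)$, and letting $R\to+\infty$, using the scaling $\nabla^\alpha(\eta(\cdot/R))=R^{-\alpha}(\nabla^\alpha\eta)(\cdot/R)$ and $\nabla^\alpha\eta\in L^\infty$. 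Since $\chi_{E^c}(y)-\chi_{E^c}(x)=-(\chi_E(y)-\chi_E(x))$ we have $\div^\alpha_\NL(\chi_{E^c},\varphi)=-\div^\alpha_\NL(\chi_E,\varphi)$; combining this with $\int_{\R^n}\div^\alpha_\NL(\chi_E,\varphi)\di x=0$ (a symmetrisation of the double integral, absolutely convergent because $\alpha<1$ and $\supp\varphi$ is bounded) gives, through \cref{def:weak_NL_grad}, that $D^\alpha_\NL(\chi_{E^c},\chi_{E^c})=D^\alpha_\NL(\chi_E,\chi_E)$. Applying~\eqref{eq:NL_grad_chi_E_repr} to $E^c$ and comparing with the one for $E$ via $D^\alpha\chi_{E^c}=-D^\alpha\chi_E$ then forces $\overline{\chi_E}+\overline{\chi_{E^c}}=1$ $|D^\alpha\chi_E|$-a.e.; since $|\overline{\chi_{E^c}}|\le1$ $|D^\alpha\chi_E|$-a.e.\ as well, this yields $\overline{\chi_E}\ge0$ and completes~\eqref{eq:bar_E}. (Alternatively, $0\le\overline{\chi_E}\le1$ should be readable off the construction of $\bar f$ in the proof of \cref{res:conditional_leibniz}, where $\chi_E$ is approximated by $[0,1]$-valued mollifications.)

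For the last assertion, assume in addition $\chi_E\in BV^{\alpha,1}(\R^n)$, so that $|E|<+\infty$ and $|D^\alpha\chi_E|(\R^n)<+\infty$. One first recalls (again by a cut-off argument of the kind above, applied to~\eqref{eq:ibp_BV_frac}) that $D^\alpha\chi_E(\R^n)=0$. Next, testing \cref{def:weak_NL_grad} with $\varphi_R:=\eta(\cdot/R)e_i$, using $\chi_E(x)(\chi_E(y)-\chi_E(x))=\chi_E(x)\chi_E(y)-\chi_E(x)$ and~\eqref{eq:ibp_BV_frac}, one arrives at
\begin{equation*}
\int_{\R^n}\varphi_R\cdot\di D^\alpha_\NL(\chi_E,\chi_E)
=
\mu_{n,\alpha}\iint_{E\times E}\frac{(\varphi_R(y)-\varphi_R(x))\cdot(y-x)}{|y-x|^{n+\alpha+1}}\di x\di y
+
\int_{\R^n}\varphi_R\cdot\di D^\alpha\chi_E .
\end{equation*}
The double integral on the right tends to $0$ as $R\to+\infty$: one bounds the kernel by $\min\set*{2\|\eta\|_{L^\infty}\,|y-x|^{-n-\alpha},\ R^{-1}\Lip(\eta)\,|y-x|^{1-n-\alpha}}$ and uses $\int_E|y-x|^{1-n-\alpha}\di y\lesssim1+|E|$ uniformly in $x$ — this is exactly where $\alpha<1$ (local integrability of $|z|^{1-n-\alpha}$) and $|E|<+\infty$ enter. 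Letting $R\to+\infty$, the left-hand side converges to the $i$-th component of $D^\alpha_\NL(\chi_E,\chi_E)(\R^n)$ by dominated convergence (legitimate since $|D^\alpha_\NL(\chi_E,\chi_E)|\le|D^\alpha\chi_E|$ by~\eqref{eq:NL_grad_chi_E_repr} and~\eqref{eq:bar_E}), while the last term tends to $0$ because $D^\alpha\chi_E(\R^n)=0$. Hence $D^\alpha_\NL(\chi_E,\chi_E)(\R^n)=0$, and then~\eqref{eq:NL_grad_chi_E_repr} together with $D^\alpha\chi_E(\R^n)=0$ gives $\int_{\R^n}\overline{\chi_E}\di D^\alpha\chi_E=0$, i.e.~\eqref{eq:zero_measure_R_n_NL}. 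The identification~\eqref{eq:NL_grad_chi_E_repr} and the rescaling limits are routine; the main obstacle is this last step, which requires both $D^\alpha\chi_E(\R^n)=0$ and the vanishing in the limit of the double integral over $E\times E$, whose control relies on the local integrability of $|z|^{1-n-\alpha}$ (the condition $\alpha<1$) together with $|E|<+\infty$, while a secondary delicate point is obtaining the sharp sign $\overline{\chi_E}\ge0$ rather than merely $|\overline{\chi_E}|\le1$.
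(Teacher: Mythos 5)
Your proof is correct, and it follows the paper's strategy for the existence of $D^\alpha_\NL(\chi_E,\chi_E)$ and the representation~\eqref{eq:NL_grad_chi_E_repr} (apply \cref{res:conditional_leibniz} with $f=g=\chi_E$, using uniqueness). Where you diverge is in two places. First, for the bound $0\le\overline{\chi_E}\le 1$ in~\eqref{eq:bar_E}: the paper simply observes that $\overline{\chi_E}$ is a weak$^*$ limit in $L^\infty(\R^n,|D^\alpha\chi_E|)$ of the $[0,1]$-valued mollifications $\rho_{\eps_k}*\chi_E$ appearing in the proof of \cref{res:conditional_leibniz}, which immediately forces $0\le\overline{\chi_E}\le1$ $|D^\alpha\chi_E|$-a.e.; your alternative route through $E^c$ (using $D^\alpha\chi_{E^c}=-D^\alpha\chi_E$, $D^\alpha_\NL(\chi_{E^c},\chi_{E^c})=D^\alpha_\NL(\chi_E,\chi_E)$, hence $\overline{\chi_E}+\overline{\chi_{E^c}}=1$) is correct but longer, and in fact you note the paper's route as your parenthetical ``alternatively''. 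Second, for $D^\alpha_\NL(\chi_E,\chi_E)(\R^n)=0$: the paper tests \cref{def:weak_NL_grad} with $\varphi_k=\eta_k\e_j$ and invokes \cite{Comi-Stefani22-L}*{Cor.~2.7} together with the scaling identity $[\eta_k]_{B^\alpha_{\infty,1}(\R^n)}=k^{-\alpha}[\eta]_{B^\alpha_{\infty,1}(\R^n)}$ to get a bound $\lesssim |E|\,k^{-\alpha}\to0$, whereas you expand $\chi_E(x)\bigl(\chi_E(y)-\chi_E(x)\bigr)=\chi_E(x)\chi_E(y)-\chi_E(x)$ to isolate a double integral over $E\times E$ plus a $D^\alpha\chi_E$ term and then estimate the $E\times E$ integral by hand with the kernel bound $\min\set*{2\|\eta\|_{L^\infty}|z|^{-n-\alpha},R^{-1}\Lip(\eta)|z|^{1-n-\alpha}}$, giving an $R^{-1}$ decay. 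Your route is more elementary and self-contained (it avoids the Besov-seminorm machinery) at the cost of some extra bookkeeping; both exploit the same two inputs, $|E|<+\infty$ and $D^\alpha\chi_E(\R^n)=0$ (the latter from \cite{Comi-Stefani22-L}*{Lem.~2.5} in the paper, from your cut-off argument in the proposal), and both conclude via dominated convergence using $|D^\alpha_\NL(\chi_E,\chi_E)|\le|D^\alpha\chi_E|\in\M(\R^n)$.
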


In the limiting case $\alpha = 1$, the non-local gradient disappears, hence~\eqref{eq:NL_grad_chi_E_repr} reduces to $(1 - 2\, \overline{\chi_E} )\,D \chi_E = 0$, coherently with the fact that $\overline{\chi_E} = \chi_E^\star = \frac{1}{2}$ $|D \chi_E|$-a.e.\ in~$\R^n$.

\subsection{Analysis of blow-ups}

The main result of our first paper, see~\cite{Comi-Stefani19}*{Th.~5.8 and Prop.~5.9}, provides the following fractional counterpart of  De Giorgi's Blow-up Theorem for sets with locally finite perimeter (see~\cite{Maggi12}*{Part Two} for a detailed exposition).
Here and in the rest of the paper, for any measurable $E\subset\R^n$ and $x\in\R^n$, we let $\tang(E,x)$ be the set of all \emph{tangent sets to $E$ at $x$}, i.e., all limit points of $\set*{\frac{E-x}r : r>0}$ with respect to the convergence in $L^1_\loc(\R^n)$ as $r\to0^+$.

\begin{theorem}[Existence and rigidity of blow-ups]
\label{res:old_blowup}
If $\chi_E\in BV^{\alpha,\infty}_\loc(\R^n)$ and $x\in\redb^\alpha E$, then $\tang(E,x)\ne\emptyset$ and any $F\in\tang(E,x)$ is such that $\chi_F\in BV^{\alpha,\infty}_\loc(\R^n)$ with $\nu^\alpha_F(y)=\nu^\alpha_E(x)$ for $|D^\alpha\chi_F|$-a.e.\ $y\in\redb^\alpha F$.
\end{theorem}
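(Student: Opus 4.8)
\emph{Sketch of proof.}
The plan is to run the fractional counterpart of De Giorgi's blow-up argument, isolating all the non-locality of $\nabla^\alpha$ into a single density estimate. First I record how the fractional variation scales under the blow-up map. Writing $\tau_{x,r}(y)=\frac{y-x}{r}$, so that $\frac{E-x}{r}=\tau_{x,r}(E)=:E_{x,r}$, a rescaling of the integration variable in~\eqref{eq:def_div_alpha} gives the dilation identity $\div^\alpha(\phi\circ\tau_{x,r})=r^{-\alpha}\,(\div^\alpha\phi)\circ\tau_{x,r}$ for every $\phi\in C^\infty_c(\R^n;\R^n)$; inserting it into~\eqref{eq:ibp_BV_frac} and changing variables shows $\chi_{E_{x,r}}\in BV^{\alpha,\infty}_\loc(\R^n)$ with $D^\alpha\chi_{E_{x,r}}=r^{\alpha-n}(\tau_{x,r})_{\#}D^\alpha\chi_E$, hence
\[
D^\alpha\chi_{E_{x,r}}(B_\rho)=r^{\alpha-n}\,D^\alpha\chi_E(B_{\rho r}(x)),
\qquad
|D^\alpha\chi_{E_{x,r}}|(B_\rho)=r^{\alpha-n}\,|D^\alpha\chi_E|(B_{\rho r}(x))
\]
for all $\rho>0$, and in particular $\frac{D^\alpha\chi_{E_{x,r}}(B_\rho)}{|D^\alpha\chi_{E_{x,r}}|(B_\rho)}=\frac{D^\alpha\chi_E(B_{\rho r}(x))}{|D^\alpha\chi_E|(B_{\rho r}(x))}$ whenever the denominators are positive. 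Thus everything is controlled by the asymptotics as $s\to0^+$ of $s\mapsto|D^\alpha\chi_E|(B_s(x))$ and of $s\mapsto D^\alpha\chi_E(B_s(x))/|D^\alpha\chi_E|(B_s(x))$.

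To obtain $\tang(E,x)\ne\emptyset$ I would prove precompactness of $\{\chi_{E_{x,r}}\}$ in $L^1_\loc(\R^n)$ as $r\to0^+$. The quantitative input is an \emph{upper density estimate} at the reduced-boundary point, $\limsup_{s\to0^+}s^{\alpha-n}\,|D^\alpha\chi_E|(B_s(x))<+\infty$; I would derive it by combining the fractional Gauss--Green formula~\eqref{eq:frac_ibp_BV_sets} with a localized fractional Sobolev-type inequality, and this is the step where the fractional theory genuinely differs from the classical one, since $D^\alpha\chi_E$ may be singular and is generated non-locally. By the scaling identity this yields a uniform bound $|D^\alpha\chi_{E_{x,r}}|(B_\rho)\le C_\rho$ for every $\rho>0$ and all small $r$; together with the compactness, into $L^1_\loc(\R^n)$, of families of characteristic functions with such a bound (via the continuous embedding $BV^{\alpha,\infty}_\loc\hookrightarrow W^{\beta,1}_\loc$ for $\beta\in(0,\alpha)$ and the compactness of the latter into $L^1_\loc$), precompactness follows. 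Hence any $r_k\to0^+$ admits a subsequence, not relabelled and refined so that the convergence also holds a.e., with $\chi_{E_{x,r_k}}\to u$ in $L^1_\loc(\R^n)$; since a.e.\ limits of $\{0,1\}$-valued functions are $\{0,1\}$-valued, $u=\chi_F$ for a measurable $F\subset\R^n$, i.e.\ $F\in\tang(E,x)$. I expect this compactness step --- concretely, the upper density estimate --- to be the main obstacle.

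Fix now $F\in\tang(E,x)$, say $\chi_{E_{x,r_k}}\to\chi_F$ in $L^1_\loc(\R^n)$. For $\phi\in C^\infty_c(\R^n;\R^n)$ one has $\div^\alpha\phi\in L^1(\R^n)\cap L^\infty(\R^n)$ (bounded near $\supp\phi$ since $\alpha<1$, with decay of order $|x|^{-n-\alpha}$ at infinity), so each functional $g\mapsto\int_{\R^n}g\,\div^\alpha\phi\di x$ is continuous along the $L^1_\loc$ convergence of uniformly bounded functions. Taking the supremum in the definition of the fractional variation yields lower semicontinuity, so $|D^\alpha\chi_F|(A)\le\liminf_k|D^\alpha\chi_{E_{x,r_k}}|(A)$ for open $A$, and with the uniform bound above this gives $\chi_F\in BV^{\alpha,\infty}_\loc(\R^n)$. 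The same continuity gives $D^\alpha\chi_{E_{x,r_k}}\weakstarto D^\alpha\chi_F$ in $\M_\loc(\R^n;\R^n)$ and, along a further subsequence, $|D^\alpha\chi_{E_{x,r_k}}|\weakstarto\mu$ in $\M_\loc(\R^n)$ for some $\mu\ge|D^\alpha\chi_F|$.

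Finally I identify the normal. Set $\nu_0:=\nu^\alpha_E(x)\in\mathbb S^{n-1}$ and write the polar decomposition $D^\alpha\chi_F=\sigma\,|D^\alpha\chi_F|$, so $|\sigma|=1$ $|D^\alpha\chi_F|$-a.e.; it suffices to show $\nu_0\cdot\sigma=1$ $|D^\alpha\chi_F|$-a.e., because then $|\sigma-\nu_0|^2=2-2\,\nu_0\cdot\sigma=0$ $|D^\alpha\chi_F|$-a.e. Let $\rho>0$ satisfy $\mu(\partial B_\rho)=0$ (this excludes at most countably many $\rho$, and then also $|D^\alpha\chi_F|(\partial B_\rho)=0$). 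Using the scaling identity, the definition of $\redb^\alpha E$ with $\rho r_k\to0^+$, and the two weak-$*$ convergences,
\begin{align*}
\nu_0\cdot D^\alpha\chi_F(B_\rho)
&=\lim_{k\to\infty}\left(\nu_0\cdot\frac{D^\alpha\chi_E(B_{\rho r_k}(x))}{|D^\alpha\chi_E|(B_{\rho r_k}(x))}\right)|D^\alpha\chi_{E_{x,r_k}}|(B_\rho)\\
&=(\nu_0\cdot\nu_0)\,\mu(B_\rho)=\mu(B_\rho)\ge|D^\alpha\chi_F|(B_\rho).
\end{align*}
Therefore $\int_{B_\rho}(1-\nu_0\cdot\sigma)\di|D^\alpha\chi_F|=|D^\alpha\chi_F|(B_\rho)-\nu_0\cdot D^\alpha\chi_F(B_\rho)\le0$, while the integrand is nonnegative because $\nu_0\cdot\sigma\le|\nu_0|\,|\sigma|=1$; hence $\nu_0\cdot\sigma=1$ $|D^\alpha\chi_F|$-a.e.\ in $B_\rho$, and letting $\rho\to+\infty$ along admissible radii gives $D^\alpha\chi_F=\nu_0\,|D^\alpha\chi_F|$ in $\M_\loc(\R^n;\R^n)$. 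Consequently, for $|D^\alpha\chi_F|$-a.e.\ $y\in\R^n$ one has $y\in\supp|D^\alpha\chi_F|$ and $D^\alpha\chi_F(B_r(y))/|D^\alpha\chi_F|(B_r(y))=\nu_0$ for every $r>0$ with $|D^\alpha\chi_F|(B_r(y))>0$, so $y\in\redb^\alpha F$ and $\nu^\alpha_F(y)=\nu_0=\nu^\alpha_E(x)$. I would close by noting that the stronger, De Giorgi-type conclusion that $F$ is a half-space is \emph{not} claimed at this stage --- pinning down the exact structure of $F$ is precisely the refinement taken up afterwards.
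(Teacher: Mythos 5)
Your sketch runs the De Giorgi blow-up scheme, which is precisely the strategy of the cited proof in Comi--Stefani~(2019), Th.~5.8 and Prop.~5.9; the present paper only recalls the theorem and does not re-prove it. The scaling identity $D^\alpha\chi_{E_{x,r}}=r^{\alpha-n}(\tau_{x,r})_{\#}D^\alpha\chi_E$, the lower-semicontinuity argument (testing against $\div^\alpha\phi\in L^1\cap L^\infty$ and using dominated convergence along an a.e.\ convergent subsequence), and the rigidity step via the polar decomposition of $D^\alpha\chi_F$ combined with weak-$*$ convergence of both $D^\alpha\chi_{E_{x,r_k}}$ and $|D^\alpha\chi_{E_{x,r_k}}|$ along radii $\rho$ with $\mu(\partial B_\rho)=0$ are all correct, and the conclusion that $|D^\alpha\chi_F|$-a.e.\ point belongs to $\redb^\alpha F$ with normal $\nu_E^\alpha(x)$ follows exactly as you say.

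The one genuine gap --- which you correctly single out --- is the uniform upper density estimate $|D^\alpha\chi_E|(B_r(x))\le A_{n,\alpha}\,r^{n-\alpha}$ for $r\in(0,r_x)$ at points $x\in\supp|D^\alpha\chi_E|$. This is precisely Theorem~5.3 of Comi--Stefani~(2019) and is the hinge of the whole blow-up argument; your sketch only gestures at a derivation via Gauss--Green plus a localized Sobolev inequality without supplying it, so as written the compactness step is not yet self-contained. A secondary point: the embedding $BV^{\alpha,\infty}_\loc\hookrightarrow W^{\beta,1}_\loc$, $\beta<\alpha$, that you invoke for $L^1_\loc$ precompactness deserves a word of care, since the fractional seminorms are non-local and the local embedding is not an immediate consequence of the global one; in the case at hand the uniform $L^\infty$ bound by~$1$ together with the uniform local variation bound coming from the density estimate makes it safer to verify $L^1_\loc$ equicontinuity of translates directly and apply the Riesz--Kolmogorov criterion. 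Granted the density estimate, the rest of your argument goes through as written.
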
 

The second main aim of this note is to refine \cref{res:old_blowup}.
On the one hand, we provide the following convergence result, which was somehow implicit in the proof of \cite{Comi-Stefani19}*{Prop.~5.9}.

\begin{theorem}[Refined convergence] 
\label{res:convergence}
Let $\chi_E\in BV^{\alpha,\infty}_\loc(\R^n)$ and $x \in \redb^{\alpha} E$. 
If $F \in \tang(E, x)$ with  $\chi_{\frac{E - x}{r_{k}}} \to \chi_F$ in $L^{1}_\loc(\R^{n})$ as $r_{k} \to 0^+$, then, up to extracting a subsequence:
\begin{enumerate}[label=(\roman*),ref=\roman*,itemsep=.5ex,leftmargin=5ex]

\item 
\label{item:weak_conv_blow_up} $D^{\alpha} \chi_{\frac{E - x}{r_{k}}} \weakto D^{\alpha} \chi_F$
in $\M_{\loc}(\R^{n}; \R^{n})$
as $r_{k} \to 0^+$;

\item
\label{item:weak_conv_blow_up_tot_var} 
$|D^{\alpha} \chi_{\frac{E - x}{r_{k}}}| \weakto |D^{\alpha} \chi_F|$
in $\M_\loc(\R^{n})$
as $r_{k} \to 0^+$; 

\item
\label{item:conv_blow_up_tot_var_ball}
$D^{\alpha} \chi_{\frac{E - x}{r_{k}}}(B_{R}) \to D^{\alpha} \chi_F(B_{R})$
and 
$|D^{\alpha} \chi_{\frac{E - x}{r_{k}}}|(B_{R}) \to |D^{\alpha} \chi_F|(B_{R})$
for all $R>0$ such that $|D^\alpha \chi_F|(\partial B_R) = 0$ (in particular, for a.e. $R > 0$). 
\end{enumerate}
\end{theorem}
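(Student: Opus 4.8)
The plan is to run a standard compactness argument for the rescaled measures, using that blow-ups of a $BV^{\alpha,\infty}_\loc$ function enjoy uniform local bounds on the fractional variation. First I would fix $R>0$ and note that, by the scaling behaviour of the fractional $\alpha$-variation (for $E_r := \frac{E-x}{r}$ one has $D^\alpha\chi_{E_r}(A) = r^{n-\alpha}\,(\text{push-forward of } D^\alpha\chi_E \text{ by } y\mapsto \frac{y-x}{r})(A)$, up to the usual normalisation already recorded implicitly in \cite{Comi-Stefani19}*{Prop.~5.9}), the total variations $|D^\alpha\chi_{E_{r_k}}|(B_R)$ are uniformly bounded in $k$: indeed this boundedness is exactly what makes $\chi_E\in BV^{\alpha,\infty}_\loc(\R^n)$ propagate to the blow-up in \cref{res:old_blowup}. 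Hence the sequence of $\R^n$-valued Radon measures $\{D^\alpha\chi_{E_{r_k}}\}_k$ is bounded in $\M_\loc(\R^n;\R^n)$ and, by weak-$*$ compactness together with a diagonal argument over $R\in\N$, a subsequence converges weakly-$*$ in $\M_\loc(\R^n;\R^n)$ to some $\lambda\in\M_\loc(\R^n;\R^n)$. The point is to identify $\lambda = D^\alpha\chi_F$: since $\chi_{E_{r_k}}\to\chi_F$ in $L^1_\loc(\R^n)$ and the fractional divergence $\div^\alpha\phi$ of any $\phi\in C^\infty_c(\R^n;\R^n)$ is a fixed $L^1$ function (it decays like $|x|^{-n-\alpha}$ at infinity), passing to the limit in the integration-by-parts identity \eqref{eq:ibp_BV_frac} written for $\chi_{E_{r_k}}$ gives $\int_{\R^n}\chi_F\,\div^\alpha\phi\,dx = -\int_{\R^n}\phi\cdot d\lambda$ for all test $\phi$, so $\lambda = D^\alpha\chi_F$ by uniqueness of the fractional variation measure. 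This proves \ref{item:weak_conv_blow_up}.

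For \ref{item:weak_conv_blow_up_tot_var} I would again use compactness: the total variation measures $|D^\alpha\chi_{E_{r_k}}|$ are bounded in $\M_\loc(\R^n)$, so up to a further subsequence $|D^\alpha\chi_{E_{r_k}}|\weakstarto\mu$ for some nonnegative $\mu\in\M_\loc(\R^n)$. By lower semicontinuity of the total variation under weak-$*$ convergence of the vector measures (part \ref{item:weak_conv_blow_up}), $|D^\alpha\chi_F|\le\mu$. The reverse inequality $\mu\le|D^\alpha\chi_F|$ is the delicate point and is where I expect the main obstacle to lie: it amounts to showing there is no loss of mass, i.e.\ that the blow-up sequence does not develop extra oscillation in the fractional variation. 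Here I would use the structure of the fractional $\alpha$-variation of a blow-up furnished by \cref{res:old_blowup}, namely that $\nu^\alpha_F$ is $|D^\alpha\chi_F|$-a.e.\ the constant vector $\nu^\alpha_E(x)$; combined with the analogous fact — made quantitative — that the rescaled normals $\nu^\alpha_{E_{r_k}}$ concentrate around $\nu^\alpha_E(x)$, one gets that $|D^\alpha\chi_{E_{r_k}}| = \nu^\alpha_E(x)\cdot D^\alpha\chi_{E_{r_k}} + o(1)$ in the relevant sense, so the scalar measures $\nu^\alpha_E(x)\cdot D^\alpha\chi_{E_{r_k}}$ and $|D^\alpha\chi_{E_{r_k}}|$ have the same weak-$*$ limit; by \ref{item:weak_conv_blow_up} this limit is $\nu^\alpha_E(x)\cdot D^\alpha\chi_F = |D^\alpha\chi_F|$. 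This gives $\mu=|D^\alpha\chi_F|$ and hence \ref{item:weak_conv_blow_up_tot_var}. (Alternatively, one can argue directly at the level of the blow-up identity in \cite{Comi-Stefani19}*{Prop.~5.9}, which already contains a convergence of the total variations on balls centred at the origin.)

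Finally, \ref{item:conv_blow_up_tot_var_ball} is the routine consequence of weak-$*$ convergence: if $\mu_k\weakstarto\mu$ in $\M_\loc$ and $\mu(\partial B_R)=0$, then $\mu_k(B_R)\to\mu(B_R)$ (a continuity-set argument, splitting $\mu_k(B_R)$ via $\liminf\ge\mu(B_R)$ from the open set $B_R$ and $\limsup\le\mu(\overline{B_R})=\mu(B_R)$ from the closed set $\overline{B_R}$). Applying this both to the vector measures in \ref{item:weak_conv_blow_up} (componentwise, or to $\phi\cdot D^\alpha\chi_{E_{r_k}}$ for suitable approximations of $\chi_{B_R}\nu$) and to the scalar total variations in \ref{item:weak_conv_blow_up_tot_var} gives the two stated limits. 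The parenthetical "for a.e.\ $R>0$" is because $R\mapsto|D^\alpha\chi_F|(\overline{B_R})$ is monotone, hence has at most countably many discontinuities, and $|D^\alpha\chi_F|(\partial B_R)>0$ only at such $R$. I expect the only genuine work is the no-loss-of-mass step in \ref{item:weak_conv_blow_up_tot_var}; everything else is bookkeeping with weak-$*$ compactness, the scaling of $D^\alpha$, and the continuity of $\phi\mapsto\div^\alpha\phi$ from $C^\infty_c$ into $L^1(\R^n)$.
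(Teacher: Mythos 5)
Your proposal is correct and follows essentially the same route as the paper: part (i) by passing to the limit in the duality pairing using $\div^\alpha\phi\in L^1(\R^n)$, part (ii) by the no-loss-of-mass argument based on the concentration of $\nu^\alpha_{E}$ around $\nu^\alpha_E(x)$ (which the paper makes quantitative via Jensen's inequality and the defining limit $\nu^\alpha_E(x)=\lim_{r\to0^+}\mint{-}_{B_r(x)}\nu^\alpha_E\di|D^\alpha\chi_E|$, combined with the rigidity $\nu^\alpha_F=\nu^\alpha_E(x)$ from \cite{Comi-Stefani19}*{Prop.~5.9}), and part (iii) by the standard continuity-set argument. The only cosmetic difference is that the paper skips the preliminary weak-$*$ compactness step in (i) and identifies the limit directly.
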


On the other hand, we provide the following characterization of blow-ups, which can be seen as a first step towards a fractional counterpart of De Giorgi's Structure Theorem for sets with locally finite perimeter, see \cite{Maggi12}*{Part 2} for instance.
Here and in the following, we let $\de^\alpha=D^\alpha_{\R}$ denote the fractional variation measure in dimension $n=1$ and we let $P$ be the standard De Giorgi's perimeter.

\begin{theorem}[Characterization of blow-ups]
\label{res:blowup}
Let $\chi_E\in BV^{\alpha,\infty}_{\loc}(\R^n)$, $x\in\redb^\alpha E$ and $\nu_E^\alpha(x)=\e_n$. 
If $F\in\tang(E,x)$, then $F=\R^{n-1}\times M$ with $M\subset\R$ such that:
\begin{enumerate}[label=(\roman*),ref=\roman*,itemsep=.5ex,leftmargin=5ex]

\item
\label{item:blowup_M_BV} 
$\chi_M\in BV^{\alpha,\infty}_\loc(\R)$ with $\de^\alpha\chi_M\ge0$;

\item
\label{item:blowup_M_meas_infty} 
$|M|,|M^c|\in\set*{0,+\infty}$;

\item
\label{item:blowup_M_esssup} 
if $|M| = + \infty$, then $\esssup M=+\infty$;

\item
\label{item:blowup_M_intervals} 
if $M\ne\emptyset,\R$ is such that $P(M)<+\infty$, then $M=(m,+\infty)$ for some $m\in\R$.
\end{enumerate}
\end{theorem}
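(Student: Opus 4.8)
The plan is to combine the refined convergence of \cref{res:convergence} with the rigidity of the fractional $\alpha$-gradient and with one-dimensional arguments based on the sign condition $\de^\alpha\chi_M\ge0$. Throughout write $x=(x',x_n)\in\R^{n-1}\times\R$. By \cref{res:old_blowup} we know $\chi_F\in BV^{\alpha,\infty}_\loc(\R^n)$, and we may assume $D^\alpha\chi_F\ne0$, since otherwise $\chi_F$ is a.e.\ constant, $F$ is equivalent to $\emptyset$ or $\R^n$, and all the claims are immediate. Fix $r_k\to0^+$ with $\chi_{(E-x)/r_k}\to\chi_F$ in $L^1_\loc(\R^n)$ and pass to the subsequence provided by \cref{res:convergence}. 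From the scaling of the fractional $\alpha$-variation under dilations, namely $D^\alpha\chi_{(E-x)/r}(B_R)=r^{\alpha-n}D^\alpha\chi_E(B_{rR}(x))$ together with the analogous identity for the total variations, we obtain
\begin{equation*}
\frac{D^\alpha\chi_{(E-x)/r}(B_R)}{|D^\alpha\chi_{(E-x)/r}|(B_R)}
=
\frac{D^\alpha\chi_E(B_{rR}(x))}{|D^\alpha\chi_E|(B_{rR}(x))}
\xrightarrow[\,r\to0^+\,]{}\nu_E^\alpha(x)=\e_n
\end{equation*}
for every $R>0$, the limit existing because $x\in\redb^\alpha E$. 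Letting $r_k\to0^+$ and invoking item~\eqref{item:conv_blow_up_tot_var_ball} of \cref{res:convergence}, we deduce $D^\alpha\chi_F(B_R)=\e_n\,|D^\alpha\chi_F|(B_R)$ for every $R$ with $|D^\alpha\chi_F|(\de B_R)=0$; using the polar decomposition of $D^\alpha\chi_F$ and the equality case in $\e_n\cdot D^\alpha\chi_F(B_R)\le|D^\alpha\chi_F|(B_R)$, and then letting $R\to+\infty$ along admissible radii, we arrive at $D^\alpha\chi_F=\e_n\,|D^\alpha\chi_F|$ in $\M_\loc(\R^n;\R^n)$.

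Next, I would establish the cylindrical structure and item~\eqref{item:blowup_M_BV}. Since the first $n-1$ components of the vector measure $D^\alpha\chi_F$ vanish, the factorization of the fractional gradient through the Riesz potential, $\nabla^\alpha=\nabla\circ I_{1-\alpha}$ (equivalently $\nabla=(-\Delta)^{\frac{1-\alpha}{2}}\circ\nabla^\alpha$), forces $\de_{x_j}\chi_F=0$ in $\mathscr D'(\R^n)$ for $j=1,\dots,n-1$; hence $\chi_F$ coincides a.e.\ with a function of $x_n$ alone, that is $F=\R^{n-1}\times M$ for some measurable $M\subset\R$, and $D^\alpha\chi_F$ is invariant under translations in the first $n-1$ variables, so that $|D^\alpha\chi_F|=\Leb{n-1}\otimes\sigma$ for a unique locally finite $\sigma\ge0$ on $\R$. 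Mollifying $\chi_F$ by a product kernel and using the elementary dimensional-reduction identity
\begin{equation*}
\nabla^\alpha\big(g(x_n)\big)=c_{n,\alpha}\,\big(\de^\alpha g\big)(x_n)\,\e_n,
\qquad
c_{n,\alpha}=\frac{\mu_{n,\alpha}}{\mu_{1,\alpha}}\int_{\R^{n-1}}\frac{\di z}{(1+|z|^2)^{\frac{n+\alpha+1}{2}}}>0,
\end{equation*}
valid for $g\in C^\infty_b(\R)$ and obtained by integrating out the $n-1$ transverse variables in~\eqref{eq:def_nabla_alpha}, we can pass to the limit and conclude that $\chi_M\in L^\infty(\R)$ satisfies $\de^\alpha\chi_M=c_{n,\alpha}^{-1}\sigma$ in $\M_\loc(\R)$; in particular $\chi_M\in BV^{\alpha,\infty}_\loc(\R)$ with $\de^\alpha\chi_M\ge0$, which is item~\eqref{item:blowup_M_BV}.

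The remaining items are one-dimensional statements resting on $\de^\alpha\chi_M\ge0$. For item~\eqref{item:blowup_M_meas_infty}, if $0<|M|<+\infty$ then $\chi_M\in L^1(\R)$, so $u:=I_{1-\alpha}\chi_M$ is everywhere finite and, since $\de(I_{1-\alpha}\chi_M)=\de^\alpha\chi_M\ge0$, admits a non-decreasing representative; a dominated convergence argument shows $u(s)\to0$ as $s\to\pm\infty$, whence $u\equiv0$ and $|M|=0$, a contradiction. Applying the same reasoning to $\chi_{M^c}=1-\chi_M$, which lies in $BV^{\alpha,\infty}_\loc(\R)$ with $\de^\alpha\chi_{M^c}=-\de^\alpha\chi_M\le0$, rules out $0<|M^c|<+\infty$. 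For item~\eqref{item:blowup_M_esssup}, suppose $|M|=+\infty$ and $b:=\esssup M<+\infty$, and pick $\psi\in C^\infty_c((b,+\infty))$ with $\psi\ge0$ and $\psi\not\equiv0$. As $M\subset(-\infty,b]$ up to a null set and $\psi$ vanishes there, \eqref{eq:def_div_alpha} gives $\div^\alpha\psi(x)=\mu_{1,\alpha}\int_\R\psi(y)(y-x)^{-1-\alpha}\di y>0$ for a.e.\ $x\in M$, while $\div^\alpha\psi\in L^1(\R)$; hence the fractional integration by parts~\eqref{eq:ibp_BV_frac} gives $0\le\int_\R\psi\di\de^\alpha\chi_M=-\int_M\div^\alpha\psi\di x<0$, a contradiction. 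Finally, for item~\eqref{item:blowup_M_intervals}: if $M\ne\emptyset,\R$ up to null sets and $P(M)<+\infty$, then $\chi_M\in BV(\R)$, so $M$ agrees up to a null set with a finite union of pairwise disjoint open intervals, and items~\eqref{item:blowup_M_meas_infty}--\eqref{item:blowup_M_esssup} force it to have the form $M=(a_1,b_1)\cup\dots\cup(a_{k-1},b_{k-1})\cup(a_k,+\infty)$ with $-\infty<a_1<b_1<\dots<a_k$. Then $D\chi_M=\sum_{i=1}^k\delta_{a_i}-\sum_{i=1}^{k-1}\delta_{b_i}$, hence $\de^\alpha\chi_M=\nabla^\alpha\chi_M\,\Leb1$ with
\begin{equation*}
\nabla^\alpha\chi_M(x)=\gamma\bigg(\sum_{i=1}^{k}|x-a_i|^{-\alpha}-\sum_{i=1}^{k-1}|x-b_i|^{-\alpha}\bigg),\qquad\gamma>0;
\end{equation*}
if $k\ge2$, the term $-|x-b_1|^{-\alpha}$ makes this density negative near $x=b_1$, contradicting $\de^\alpha\chi_M\ge0$. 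Therefore $k=1$ and $M=(m,+\infty)$ with $m=a_1$.

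The main obstacle is the cylindrical structure: turning the directional and sign information on the vector measure $D^\alpha\chi_F$ into the identity $F=\R^{n-1}\times M$ and into the one-dimensional correspondence between $D^\alpha\chi_F$ and $\de^\alpha\chi_M$ requires the fine mapping properties of the operator $\nabla^\alpha$ (its intertwining with the Riesz potential and the fractional Laplacian) together with a careful treatment of the fact that $\chi_F$ lies merely in $L^\infty(\R^n)$, so that $I_{1-\alpha}\chi_F$ need not be finite-valued. Once the reduction to the one-dimensional problem is established, items~\eqref{item:blowup_M_meas_infty}--\eqref{item:blowup_M_intervals} are essentially self-contained computations built on the single fact $\de^\alpha\chi_M\ge0$.
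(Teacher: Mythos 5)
Your proposal is correct and follows essentially the same route as the paper: rigidity of the normal $\nu^\alpha_F=\e_n$, the null-derivative and splitting reductions (Propositions~\ref{res:null_derivativel} and~\ref{res:splitting}, which you re-sketch via the intertwining of $\nabla^\alpha$ with the Riesz potential and the Fubini-type dimensional reduction) to a one-dimensional set $M$ with $\de^\alpha\chi_M\ge0$, and then one-dimensional arguments based on the monotonicity of $I_{1-\alpha}\chi_M$ and the explicit sign of $\nabla^\alpha$ of intervals. The only, harmless, deviations are in items~\eqref{item:blowup_M_meas_infty} and~\eqref{item:blowup_M_esssup}, where you use the decay of $I_{1-\alpha}\chi_M$ at infinity instead of its $L^p$-integrability, and a nonnegative test function supported in $(b,+\infty)$ together with~\eqref{eq:ibp_BV_frac} instead of the Gauss--Green formula~\eqref{eq:GG_fract} applied to the interval $(b,b+1)$.
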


\cref{res:blowup} shows a quite surprising similarity between the reduced boundary $\redb E$ for $\chi_E\in BV_{\loc}(\R^n)$ and the fractional reduced boundary $\redb^\alpha E$ for $\chi_E\in BV^{\alpha,\infty}_{\loc}(\R^n)$, going in the same direction of De Giorgi's Blow-up Theorem, see~\cite{Maggi12}*{Th.~15.5}.

A simple consequence of \cref{res:blowup} is that a cone cannot be a blow-up set on the fractional reduced boundary, unless it is a half-space, $\R^n$ or $\emptyset$.
Such a rigidity of blow-ups for example implies that no vertex of the square $E=[0,1]^2\subset\R^2$ belongs to $\redb^\alpha E$, for any $\alpha\in(0,1)$, in analogy with the case $\alpha=1$, see~\cite{Maggi12}*{Exam.~15.4}. 
Similarly, no vertex of the \textit{Koch snowflake} (see~\cite{Lombardini19}*{Sec.~3.3}) belongs to its fractional reduced boundary.

\cref{res:blowup} is a consequence of the following two results, which may be interesting on their own.
The first one characterizes $BV^{\alpha,\infty}_\loc$ functions with zero fractional derivative.

\begin{proposition}[Null derivative]
\label{res:null_derivativel}
Let $f\in BV^{\alpha,\infty}_\loc(\R^n)$ and let $i\in\set*{1,\dots,n}$.
Then, $D_i^\alpha f=0$ if and only if $D_i f=0$.
\end{proposition}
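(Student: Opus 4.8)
The plan is to reduce the statement to the classical fact that a distribution on $\R^n$ with vanishing $i$-th partial derivative depends only on the other $n-1$ variables, by exploiting the relation between the fractional gradient $\nabla^\alpha$ and the Riesz potential. Recall from \cite{Comi-Stefani19} that for nice functions one has $\nabla^\alpha u = \nabla I_{1-\alpha} u = I_{1-\alpha} \nabla u$, where $I_{1-\alpha}$ denotes the Riesz potential of order $1-\alpha$; in particular $\nabla^\alpha$ and the classical partial derivatives commute with $I_{1-\alpha}$ in the sense of distributions. Since $f \in BV^{\alpha,\infty}_\loc(\R^n) \subset L^\infty(\R^n)$, the component $D_i^\alpha f$ is by definition the distribution $\phi \mapsto -\int_{\R^n} f \, \div^\alpha(\phi \, \e_i)\di x = -\int_{\R^n} f\, \de_i I_{1-\alpha}\phi \di x$ for scalar test functions $\phi$.

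First I would establish the algebraic identity $D_i^\alpha f = \de_i I_{1-\alpha} f$ and $D_i f = \de_i f$ as tempered distributions (noting $f\in L^\infty$, so $I_{1-\alpha}f$ may need to be interpreted modulo polynomials, but the derivative $\de_i I_{1-\alpha}f$ is a well-defined element of $\mathscr{S}'$, cf.\ the Riesz potential calculus used in \cite{Comi-Stefani19}). Taking the Fourier transform (or simply using that $I_{1-\alpha}$ corresponds to the Fourier multiplier $|\xi|^{\alpha-1}$, which never vanishes away from the origin), one sees that $\de_i I_{1-\alpha} f = 0$ in $\mathscr{D}'(\R^n)$ forces $\widehat{\de_i f} = |\xi|^{1-\alpha}\,\widehat{\de_i I_{1-\alpha}f} = 0$ away from the origin, hence $\de_i f$ is supported at the origin, hence $\de_i f$ is a polynomial; being also a derivative of the bounded function $f$, it must be $0$. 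The converse is immediate: if $D_i f = \de_i f = 0$ then $f$ is independent of $x_i$ (up to negligible sets), so $I_{1-\alpha} f$ is likewise independent of $x_i$ where defined, giving $D_i^\alpha f = \de_i I_{1-\alpha} f = 0$.

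An alternative, more self-contained route — which I would present if one prefers to avoid subtleties with $I_{1-\alpha}$ acting on merely bounded functions — is to argue directly with mollifications. Set $f_\eps = f * \rho_\eps$; then $f_\eps \in C^\infty \cap L^\infty$, $\nabla^\alpha f_\eps = (\nabla^\alpha f) * \rho_\eps = (D^\alpha f) * \rho_\eps$, and $D_i^\alpha f = 0$ gives $\de_i I_{1-\alpha} f_\eps = (\nabla^\alpha f_\eps)_i = 0$ pointwise for every $\eps$. Since $f_\eps$ is smooth and bounded, applying $I_{\alpha-1}$ (equivalently, the fact that the only way for $\de_i$ of the smooth function $I_{1-\alpha}f_\eps$ to vanish is for $I_{1-\alpha}f_\eps$ to be $x_i$-independent, and then for $f_\eps = I_{\alpha-1}I_{1-\alpha}f_\eps$ — valid for Schwartz-class-like decay, or after a further localization argument — to be $x_i$-independent) yields $\de_i f_\eps = 0$; letting $\eps \to 0^+$ gives $\de_i f = 0$ in $\mathscr{D}'(\R^n)$, i.e.\ $D_i f = 0$.

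The main obstacle I anticipate is the careful handling of the Riesz potential $I_{1-\alpha}$ on functions in $L^\infty(\R^n)$, where $I_{1-\alpha} f$ need not be a function and is only defined up to polynomials of low degree; one must verify that the derivative $\de_i I_{1-\alpha} f$ is nevertheless an unambiguous object and that the multiplier argument ``$\de_i I_{1-\alpha} f = 0 \iff \de_i f = 0$'' goes through at this level of generality. I expect this to be dispatched by the distributional Riesz-potential identities already recorded in \cite{Comi-Stefani19} (for instance the relation between $\nabla^\alpha$ and $\nabla I_{1-\alpha}$ and its inversion), together with the elementary observation that a tempered distribution whose Fourier transform is supported at the origin is a polynomial, and a polynomial that is a first-order derivative of an $L^\infty$ function vanishes.
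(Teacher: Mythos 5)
Your overall strategy --- relate $\nabla^\alpha$ to $\nabla I_{1-\alpha}$ and invert --- is morally the same mechanism the paper uses, but both of your concrete routes hinge on an object that is not defined in the generality you need: $I_{1-\alpha}f$ (or $I_{1-\alpha}f_\eps$) for $f$ merely in $L^\infty(\R^n)$. The Riesz kernel $|x|^{1-\alpha-n}$ is not integrable at infinity against a bounded, non-decaying function, so $I_{1-\alpha}f$ is not a convergent integral, and realizing it ``modulo polynomials'' does not automatically validate the two identities your argument needs, namely $D^\alpha_i f=\de_i I_{1-\alpha}f$ and the multiplier relation $\widehat{\de_i f}=|\xi|^{1-\alpha}\,\widehat{\de_i I_{1-\alpha}f}$ (multiplication of a general tempered distribution by the non-smooth symbol $|\xi|^{1-\alpha}$ is itself undefined). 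The identities recorded in \cite{Comi-Stefani19} that you hope will ``dispatch'' this are proved for functions with integrability or decay (e.g.\ $BV^{\alpha,1}$, Schwartz-type classes), not for $L^\infty$; your second, mollification-based route has the same defect, since the inversion $f_\eps=I_{\alpha-1}I_{1-\alpha}f_\eps$ is exactly the step that fails for bounded non-decaying $f_\eps$, and the ``further localization argument'' you allude to is the missing content. Note also that your converse direction (``$f$ independent of $x_i$ $\Rightarrow$ $I_{1-\alpha}f$ independent of $x_i$'') leans on the same undefined object, although this direction admits a completely elementary fix: for $f\in\Lip_b$ depending only on $\hat x_i$, the integrand defining $\nabla^\alpha_i f(x)$ is odd in $y_i-x_i$, so the principal value vanishes; then mollify and pass to the limit. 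That is what the paper does.

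The fix for the forward direction is to put the fractional Laplacian on the \emph{test function} rather than the Riesz potential on $f$: given $\phi\in C^\infty_c(\R^n)$, set $\psi=(-\Delta)^{\frac{1-\alpha}{2}}\phi$, so that $\psi\in S^{\alpha,1}(\R^n)$ with $\nabla^\alpha\psi=\nabla\phi$ by \cite{Comi-Stefani19}*{Lem.~3.28(ii)}; then $\int_{\R^n}f\,\de_i\phi\di x=\int_{\R^n}f\,\de^\alpha_i\psi\di x$, and since $\psi$ is no longer compactly supported one approximates it by $\psi_k\in C^\infty_c(\R^n)$ in the $S^{\alpha,1}$ norm (\cite{Comi-Stefani19}*{Th.~3.23}) to legitimately integrate by parts against $D^\alpha_i f$ and conclude that the expression vanishes. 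This duality-plus-density step is the genuine idea your proposal is missing; you do write $\div^\alpha(\phi\,\e_i)=\de_i I_{1-\alpha}\phi$ at the start, which is the right side to work on, but then abandon it in favor of the Fourier analysis of $I_{1-\alpha}f$. Your auxiliary observations (a tempered distribution with Fourier support at the origin is a polynomial; a polynomial that is a derivative of an $L^\infty$ function is zero) are correct but end up not being needed once the argument is set up on the test-function side.
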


The second result allows to factorize the fractional variation measure of a $BV^{\alpha,\infty}_\loc$ function which does not depend on certain coordinates.  

\begin{proposition}[Splitting]
\label{res:splitting}
Let $f\in BV^{\alpha,\infty}_\loc(\R^n)$.
If $D_1f=0$, then there exists $g\in BV^{\alpha,\infty}_{\loc}(\R^{n-1})$ such that $f((t,x))=g(x)$ for a.e.\ $t\in\R$ and a.e.\ $x\in\R^{n-1}$ and 
\begin{equation*}
(D^\alpha_{\R^{n}})_if
=
\Leb{1}\otimes(D^\alpha_{\R^{n-1}})_i g
\quad
\text{in}\ 
\M_\loc(\R^n)\
\text{for all $i=2,\dots,n$.}
\end{equation*}
\end{proposition}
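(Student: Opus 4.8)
The plan is to leverage the classical $BV$ structure theory together with Proposition~\ref{res:null_derivativel}, which allows us to pass freely between the vanishing of $D^\alpha_i f$ and the vanishing of $D_i f$. First, since $f\in BV^{\alpha,\infty}_\loc(\R^n)\subset L^\infty(\R^n)$ has $D_1 f=0$ by hypothesis, the classical theory of $BV$ functions (see \cite{Maggi12}*{Part~Two}) gives that $f$ does not depend on the first variable: there exists $g\in L^\infty(\R^{n-1})$ with $f((t,x))=g(x)$ for a.e.\ $t\in\R$ and a.e.\ $x\in\R^{n-1}$. Concretely, one mollifies $f$ in the $x_1$-direction only, uses $\partial_{x_1}(f*\rho_\eps)=0$ to conclude each mollification is constant in $t$, and passes to the limit; this is where the $L^\infty$ (hence $L^1_\loc$) membership is used to justify convergence a.e.

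The second step is to show $g\in BV^{\alpha,\infty}_\loc(\R^{n-1})$ and to identify $(D^\alpha_{\R^n})_i f$ for $i\ge 2$. For this I would test the definition \eqref{eq:ibp_BV_frac} of the fractional variation of $f$ against vector fields of product type. Fix an open set $A'\Subset\R^{n-1}$ and $\psi\in C^\infty_c(\R^{n-1};\R^{n-1})$ with $\supp\psi\subset A'$, choose a cutoff $\eta\in C^\infty_c(\R)$, and consider $\phi((t,x))=(0,\eta(t)\psi(x))\in C^\infty_c(\R^n;\R^n)$. The key computation is that the fractional $\alpha$-divergence of such a field, integrated against a function $f$ independent of $t$, essentially decouples: using the explicit kernel in \eqref{eq:def_div_alpha} and Fubini, one evaluates $\int_{\R^n}f((t,x))\,\div^\alpha_{\R^n}\phi((t,x))\di t\di x$ and, after integrating in the $t$-variable and exploiting a one-dimensional identity for the kernel $\int_\R|(s,z)|^{-n-\alpha-1}\di s$-type integrals (this reduces the $n$-dimensional kernel to the $(n-1)$-dimensional fractional kernel up to the renormalization constants $\mu_{n,\alpha}$ versus $\mu_{n-1,\alpha}$), one obtains $\big(\int_\R\eta\di t\big)\int_{\R^{n-1}}g\,\div^\alpha_{\R^{n-1}}\psi\di x$ plus a term involving the $\phi_1\equiv0$ component that vanishes. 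Taking $\eta$ with $\int_\R\eta=1$ and supremum over admissible $\psi$ shows the fractional variation of $g$ on $A'$ is finite, so $g\in BV^{\alpha,\infty}_\loc(\R^{n-1})$; and the same identity, now read as an equality of linear functionals on general product fields and extended by density, yields $(D^\alpha_{\R^n})_i f=\Leb{1}\otimes(D^\alpha_{\R^{n-1}})_i g$ for $i=2,\dots,n$ in $\M_\loc(\R^n)$. One should also double-check, via Proposition~\ref{res:null_derivativel} applied in reverse, that the first component $(D^\alpha_{\R^n})_1 f$ plays no role here — indeed $D_1 f=0$ gives $D^\alpha_1 f=0$, consistent with the statement only addressing $i\ge2$.

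The main obstacle I anticipate is the rigorous justification of the kernel decoupling: the integrals defining $\div^\alpha$ are only conditionally convergent (principal-value type near the diagonal), so Fubini cannot be applied naively. The clean way around this is to not manipulate the kernels directly but instead to work at the level of the regularized operators or the weak formulation: approximate $f$ by $f_\eps=f*\rho_\eps$ with a mollifier that is a product $\rho_\eps^{(1)}(t)\rho_\eps^{(n-1)}(x)$, note $f_\eps$ is still independent of $t$ (so $f_\eps((t,x))=g_\eps(x)$ with $g_\eps=g*\rho_\eps^{(n-1)}$, up to absorbing the $t$-mollifier which integrates to $1$), use the integration-by-parts formula \eqref{eq:frac_ibp} for smooth compactly-non-supported but decaying functions — or rather approximate further — and apply the already-established commutation of $\nabla^\alpha$ with translations and the behaviour of $\nabla^\alpha$ on functions constant in one variable, which has been recorded in the earlier papers \cites{Comi-Stefani19,Comi-et-al21}. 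Passing to the limit $\eps\to0^+$ using the uniform bound $\|D^\alpha f_\eps\|\le|D^\alpha f|$ on compact sets and weak-$*$ compactness then delivers the claimed product structure of the measure. Once the decoupling identity is in hand, the finiteness of $|D^\alpha_{\R^{n-1}}g|$ on relatively compact sets and the factorization are immediate.
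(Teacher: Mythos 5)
Your proposal is correct and takes essentially the same route as the paper: mollify to reduce to Lipschitz $f$, exploit the Beta-function identity to show the fractional kernel decouples dimensionally (so that $(\nabla^\alpha_{\R^{n-1}})_i g = (\nabla^\alpha_{\R^n})_i f$ for Lipschitz $f$ independent of the first variable), test against product functions $\sigma(t)\phi(x)$, and pass to the limit — where the paper uses dominated convergence on both sides of the integration-by-parts identity while you invoke weak-$*$ compactness, a cosmetic difference. Your instinct that a direct Fubini on $\div^\alpha\phi$ is problematic, and the remedy of carrying out the decoupling computation on $\nabla^\alpha f_\eps$ for the mollified function instead, is exactly the paper's move.
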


Propositions~\ref{res:null_derivativel} and~\ref{res:splitting} may hold for $BV^{\alpha,p}_\loc$ functions as well, but we leave this line of research for forthcoming works, being out of the scopes of the present note.

\subsection{Analysis of non-local boundaries}
The third and last main aim of this note is to exploit the above results on blow-ups and Leibniz rules for $BV^{\alpha,\infty}_\loc$ sets to infer some properties of  non-local boundaries linked with the distributional fractional perimeter.   

On the one side, given $\chi_E\in BV^{\alpha,\infty}_\loc(\R^n)$, we may decompose the fractional variation measure of~$E$ as  
$D^\alpha\chi_E=D^\alpha_\ass\chi_E+D^\alpha_\sing\chi_E$, where $|D^\alpha_\ass\chi_E|\ll\Leb{n}$ and $D^\alpha_\sing\chi_E\perp\Leb{n}$. 
In virtue of~\eqref{eq:schonberger} in \cref{res:cap_W}, $D^\alpha_\sing\chi_E$ has a local nature, in contrast with the non-local and thus `diffuse' behavior of the measure $D^\alpha_\ass\chi_E$.
The following result, which is a simple consequence of \cref{res:cap_W}, gives an idea of the size of the support of the measure $D^\alpha_\sing\chi_E$. 
Here and in the following, we let
\begin{equation*}
\partial^- E = \set*{ x \in \R^n : 0 < |E \cap B_r(x)| < |B_r(x)|\ \text{for  all}\ r > 0}.
\end{equation*}

\begin{theorem}[Support of $D^\alpha_\sing\chi_E$]
\label{res:supp_sing}
If $\chi_E\in BV^{\alpha,\infty}_\loc(\R^n)$, then
\begin{equation*}
|D^\alpha_\sing \chi_E|(F^1) = 0\
\text{whenever}\
\chi_F\in W^{\alpha,1}(\R^n)\
\text{with either}\
|E \cap F| = 0\
\text{or}\
|E^c \cap F| = 0.
\end{equation*}
In particular, $\supp|D^\alpha_\sing\chi_E|\subset\de^-E$.
\end{theorem}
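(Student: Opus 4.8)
The plan is to deduce everything from \cref{res:cap_W}, specifically from the locality identity~\eqref{eq:schonberger}. First I would fix $\chi_E\in BV^{\alpha,\infty}_\loc(\R^n)$ and $\chi_F\in W^{\alpha,1}(\R^n)$, so that $P_\alpha(F)<+\infty$ and \cref{res:cap_W} applies to the pair $(E,F)$. By~\eqref{eq:schonberger}, the singular part of $D^\alpha\chi_{E\cap F}$ equals $\chi_{F^1}D^\alpha_\sing\chi_E$; in particular, passing to total variations,
\begin{equation*}
|D^\alpha_\sing\chi_{E\cap F}|(\R^n)
=
\int_{F^1}\di|D^\alpha_\sing\chi_E|
=
|D^\alpha_\sing\chi_E|(F^1).
\end{equation*}
Hence the claim $|D^\alpha_\sing\chi_E|(F^1)=0$ is equivalent to $D^\alpha_\sing\chi_{E\cap F}=0$, i.e.\ to $|D^\alpha\chi_{E\cap F}|\ll\Leb{n}$. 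Now, in each of the two degenerate cases I would observe that $E\cap F$ is trivial up to Lebesgue-null sets: if $|E\cap F|=0$, then $\chi_{E\cap F}=0$ a.e., so $D^\alpha\chi_{E\cap F}=0$; if $|E^c\cap F|=0$, then $\chi_{E\cap F}=\chi_F$ a.e., so $D^\alpha\chi_{E\cap F}=D^\alpha\chi_F=\nabla^\alpha\chi_F\,\Leb{n}$, which is absolutely continuous since $\chi_F\in W^{\alpha,1}(\R^n)$ (recall $\|\nabla^\alpha\chi_F\|_{L^1}\le\mu_{n,\alpha}P_\alpha(F)<+\infty$). In both cases $D^\alpha_\sing\chi_{E\cap F}=0$, and therefore $|D^\alpha_\sing\chi_E|(F^1)=0$ by the displayed identity.

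For the final assertion I would prove the contrapositive-style inclusion: if $x\notin\de^-E$, then there is $r>0$ with either $|E\cap B_r(x)|=0$ or $|E^c\cap B_r(x)|=0$. Choosing $F=B_r(x)$, which is a bounded smooth set and hence has $P_\alpha(F)<+\infty$ (so $\chi_F\in W^{\alpha,1}(\R^n)$), the first part gives $|D^\alpha_\sing\chi_E|(B_r(x)^1)=0$. Since $|B_r(x)\setminus B_r(x)^1|=0$ — indeed $B_r(x)^1$ contains the open ball $B_r(x)$ — we get $|D^\alpha_\sing\chi_E|(B_r(x))=0$, so $x\notin\supp|D^\alpha_\sing\chi_E|$. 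This yields $\supp|D^\alpha_\sing\chi_E|\subset\de^-E$.

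I do not expect a genuine obstacle here: the theorem is essentially a packaging of~\eqref{eq:schonberger} together with the elementary measure-theoretic remark that $|D^\alpha_\sing\chi_E|(F^1)$ is controlled by $|D^\alpha_\sing\chi_{E\cap F}|(\R^n)$. The only points requiring a line of care are (i) checking that $B_r(x)$ — or more generally the chosen competitor $F$ — indeed belongs to $W^{\alpha,1}(\R^n)$, which follows from the boundedness and Lipschitz regularity of $\partial B_r(x)$ together with the standard fact that bounded Lipschitz sets have finite fractional perimeter for all $\alpha\in(0,1)$; and (ii) the harmless identification $\chi_F^\star=\chi_{F^1}$ at Lebesgue points, already recorded in the excerpt, which lets one pass freely between $F$ and $F^1$ up to $\Leb{n}$-null sets when reading off the absolutely continuous part. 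Everything else is a direct substitution into~\eqref{eq:schonberger}.
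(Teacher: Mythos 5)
Your proposal is correct and follows essentially the same route as the paper: apply~\eqref{eq:schonberger} from \cref{res:cap_W}, observe that in both degenerate cases $D^\alpha_\sing\chi_{E\cap F}$ vanishes (trivially when $|E\cap F|=0$, and because $D^\alpha\chi_F=\nabla^\alpha\chi_F\,\Leb{n}$ when $|E^c\cap F|=0$), and then take $F=B_r(x)$ for the support inclusion. The only differences are cosmetic — you spell out the total-variation identity and the inclusion $B_r(x)\subset B_r(x)^1$, which the paper leaves implicit.
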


\cref{res:supp_sing} has several analogies with classical results. Indeed, it is well-known that, if $\chi_E\in BV_\loc(\R^n)$, then
$\supp|D \chi_E| = \partial^- E$ 
(see~\cite{Maggi12}*{Prop.~12.19} for instance), while, if $E$ has locally finite fractional perimeter, then 
\begin{equation*}
\partial^- E 
= 
\set*{ x \in \R^n : P_{\alpha}^L(E; B_r(x)) > 0\ \text{for all}\ r > 0 },
\end{equation*}
where 
\begin{equation*}
P_{\alpha}^L(E; A) 
= 
\int_{E \cap A} \int_{A \setminus E} \frac{\di x\di y}{|x - y|^{n+\alpha}},
\quad
A\subset\R^n,
\end{equation*}
is the \textit{local part} of the fractional perimeter $P_\alpha(E;\,\cdot\,)$, see~\cite{Lombardini19}*{Lem.~3.1}. 
Furthermore, \cref{res:supp_sing} can be combined with~\cite{Comi-Stefani19}*{Cor.~5.4} to get the estimate
\begin{equation*}
|D^\alpha_\sing \chi_E| \le c_{n, \alpha} \Haus{n-\alpha} \mres (\redb^\alpha E \cap \partial^- E).
\end{equation*}
In particular, if $\Haus{n-\alpha}(\redb^\alpha E \cap \partial^- E) = 0$, then $|D^\alpha \chi_E| \ll \Leb{n}$.
However, we warn the reader that $\partial^- E$ may have positive Lebesgue measure, even for a set with finite perimeter (see~\cite{Maggi12}*{Exam.~12.25} for instance).

On the other side, in view of \cref{res:old_blowup,res:blowup}, we wish to study the set of points $x \in \redb^{\alpha} E$ for which fractional blow-ups are non-trivial, i.e., such that $\tang(E,x)\ne\set*{\emptyset}$ and $\tang(E,x)\ne\set*{\R^n}$.
As well-known, for any $E\subset\R^n$ measurable and $x\in\R^n$, we have 
\begin{equation}
\label{eq:tantan}
\tang(E,x)=\set*{\emptyset}\iff x\in E^{0},
\quad
\tang(E,x)=\set*{\R^n}\iff x\in E^{1},
\end{equation}
where $E^{0}$ and $E^{1}$ are as in~\eqref{eq:def_density_t} for $t=0,1$.
In order to avoid such cases, we consider
\begin{equation*}
\de^*E=\R^n\setminus(E^{0}\cup E^{1}).
\end{equation*}
On the other hand, it is also worth recalling that blow-up limits for $\chi_E\in BV_\loc(\R^n)$ may be not half-spaces and yet not trivial only when considering points $x\in\de^*E\setminus\redb E$.   
Moreover, at such points, $\tang(E,x)$ can be very wild.
Indeed, as proved in~\cite{Leonardi00}*{Prop.~2.7}, for $n\ge2$ there exists a measurable set $E\subset\R^n$ with $P(E)<\infty$ and such that 
\begin{equation*}
\tang(E,0)
\supset
\set*{F\subset\R^n\ \text{measurable} : P(F)<+\infty}.
\end{equation*}
Note that, for such set $E$, it holds $\chi_E\in BV^{\alpha,\infty}_\loc(\R^n)$ for any $\alpha\in(0,1)$ due to \cite{Comi-Stefani19}*{Prop.~4.8}, so that $0\in\de^*E\setminus\redb^\alpha E$ by  \cref{res:blowup}.

The equivalences in~\eqref{eq:tantan} and the example above motivate the following definition.

\begin{definition}[Effective fractional reduced boundary] 
\label{def:eff}
Given $\chi_E\in BV^{\alpha,\infty}_\loc(\R^n)$, we let 
$\redb^{\alpha}_\eff E 
= 
\redb^{\alpha} E \cap \de^* E$ 
be the \emph{effective fractional reduced boundary} of~$E$.
\end{definition}

We recall that  $\dimension_\Haus{}(\redb^\alpha E)\ge n-\alpha$, see~\cite{Comi-Stefani19}*{Prop.~5.5}, possibly with strict inequality.
In fact, from~\eqref{eq:frac_ibp_BV_sets}, we immediately deduce that, if $|D^\alpha \chi_E| \ll \Leb{n}$ and $|D^{\alpha} \chi_{E}|(\Omega) > 0$, then $|\Omega \cap \redb^{\alpha} E| > 0$ as well. 
Indeed, in this case, we have
\begin{equation*}
\int_E \div^\alpha \varphi \di x = \int_{\Omega \cap \redb^{\alpha} E} \varphi \cdot \di D^\alpha_\ass\chi_E \ \text{ for all } \varphi \in C^{\infty}_c(\Omega; \R^n),
\end{equation*}
so that $|\Omega \cap \redb^{\alpha} E| = 0$ implies $|D^{\alpha} \chi_{E}|(\Omega) = 0$. 
In particular, this applies to the case $P_\alpha(E, \Omega) < + \infty$, see~\cite{Comi-Stefani19}*{Rem.~4.9}.
The following result refines such statement for the effective fractional reduced boundary. 
Here and below, given $\nu \in\mathbb S^{n - 1}$ and $x_0 \in \R^n$,
\begin{equation*}
H_{\nu}^{+}(x_0) 
= \set*{ y \in \R^{n} : (y - x_0) \cdot \nu \ge 0 },
\quad
H_{\nu}(x_0) = \set*{ y \in \R^{n} : (y-x_0) \cdot \nu = 0 },
\end{equation*}
with the shorthands $H_\nu^+=H_\nu^+(0)$ and $H_\nu=H_\nu(0)$.

\begin{theorem}[Properties of $\redb^\alpha_\eff E$] \label{res:eff_bello}
\quad
\begin{enumerate}[label=(\roman*),ref=\roman*,itemsep=.5ex,leftmargin=5ex]

\item 
\label{item:eff_BV_frac}
If $\chi_E\in BV^{\alpha,\infty}_\loc(\R^n)$, $x\in\redb^\alpha_\eff E$ and $\tang(E,x)=\set*{F}$, then $F=H_{\nu_{E}^\alpha(x)}^+$, and
\begin{equation*}
\Sigma_E
=
\set*{x\in\redb^\alpha_\eff E : \text{$E$ admits a unique blow-up at $x$}}
\end{equation*}
can be covered by countably many $(n-1)$-dimensional Lipschitz graphs.

\item
\label{item:eff_W}
If $\chi_{E} \in W^{\alpha, 1}_\loc(\R^{n})$, then 
$\Haus{n-\alpha}(\redb^{\alpha}_\eff E)=0$. 

\item
\label{item:eff_BV}
If $\chi_E\in BV_\loc(\R^n)$, then $\redb E \subset \redb^{\alpha}_\eff E$,
$\Haus{n - 1}(\redb^{\alpha}_\eff E \setminus \redb E) = 0$ 
and 
$\nu_{E}^{\alpha} = \nu_{E}$ on~$\redb E$.

\end{enumerate}
\end{theorem}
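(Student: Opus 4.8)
The plan is to establish the three items in turn, using throughout the blow-up results \cref{res:old_blowup,res:convergence,res:blowup}. For item~\ref{item:eff_BV_frac}, fix $x\in\redb^\alpha_\eff E$ with $\tang(E,x)=\set*{F}$ and, after a rotation, $\nu^\alpha_E(x)=\e_n$. Uniqueness of the blow-up means $\chi_{(E-x)/r}\to\chi_F$ in $L^1_\loc(\R^n)$ as $r\to0^+$; dilating this convergence by $\lambda^{-1}$ for fixed $\lambda>0$ and comparing with $\chi_{(E-x)/(\lambda r)}\to\chi_F$ gives $\chi_{F/\lambda}=\chi_F$ for all $\lambda>0$, so $F$ is a cone. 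By \cref{res:blowup}, $F=\R^{n-1}\times M$ with $\chi_M\in BV^{\alpha,\infty}_\loc(\R)$, $\de^\alpha\chi_M\ge0$ and $M$ a cone in $\R$; since $x\in\de^*E$ forces $F\ne\emptyset,\R^n$ and hence $M\ne\emptyset,\R$ up to $\Leb{1}$-negligible sets, $M$ is, modulo $\Leb{1}$, either $(0,+\infty)$ or $(-\infty,0)$, both of finite perimeter, so \cref{res:blowup}\ref{item:blowup_M_intervals} yields $M=(m,+\infty)$ and the cone property forces $m=0$. Thus $F=H^+_{\e_n}=H^+_{\nu^\alpha_E(x)}$. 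In particular every $x\in\Sigma_E$ lies in $E^{1/2}$ and satisfies $\abs*{(E\triangle H^+_{\nu^\alpha_E(x)}(x))\cap B_\rho(x)}=o(\rho^n)$ as $\rho\to0^+$. For the covering claim we partition $\Sigma_E$ into countably many pieces according to (a) the element of a fixed finite cover of $\mathbb S^{n-1}$ by small caps containing $\nu^\alpha_E(x)$, (b) the integer $k$ with $\abs*{(E\triangle H^+_{\nu^\alpha_E(x)}(x))\cap B_\rho(x)}<\eps\rho^n$ for all $\rho<1/k$ (for a suitably small fixed $\eps$), and (c) the integer $\ell$ with $\abs*{E\cap B_\rho(x)}/\abs*{B_\rho(x)}\in(\tfrac14,\tfrac34)$ for all $\rho<1/\ell$. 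On a fixed piece, whenever $x,y$ are so close that $2\abs*{x-y}<\min\set*{1/k,1/\ell}$, comparing the density of $E$ at $y$ at scale $\sim\abs*{x-y}$ with the half-space $H^+_{\nu^\alpha_E(x)}(x)$ on $B_{2\abs*{x-y}}(x)$ forces $\abs*{(y-x)\cdot\nu^\alpha_E(x)}\le c\abs*{x-y}$ for a small dimensional constant $c$; choosing the caps and $\eps$ small enough, this exhibits each piece, near each of its points, as an $(n-1)$-dimensional Lipschitz graph, and covering each piece by countably many small balls concludes.

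For item~\ref{item:eff_W}, if $\chi_E\in W^{\alpha,1}_\loc(\R^n)$ then $P_\alpha(E;A)<+\infty$ for every $A\Subset\R^n$, hence $\chi_E\in BV^{\alpha,\infty}_\loc(\R^n)$ with $D^\alpha\chi_E=\nabla^\alpha\chi_E\,\Leb{n}$, so $\mu:=\abs*{D^\alpha\chi_E}\ll\Leb{n}$. Let
\begin{equation*}
N=\set*{x\in\R^n : \limsup_{\rho\to0^+}\rho^{\alpha-n}\,\mu(B_\rho(x))>0}.
\end{equation*}
By the standard comparison between upper densities and Hausdorff measures, for all $t>0$ and $R>0$ the set $\set*{x\in B_R:\limsup_{\rho\to0^+}\rho^{\alpha-n}\mu(B_\rho(x))\ge t}$ has $\Haus{n-\alpha}$-measure at most $C(n,\alpha)\,t^{-1}\mu(B_{R+1})<+\infty$; being $\sigma$-finite for $\Haus{n-\alpha}$ and $n-\alpha<n$, it is $\Leb{n}$-negligible, hence $\mu$-negligible, hence $\Haus{n-\alpha}$-negligible by the same comparison. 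Letting $t\downarrow0$ and $R\uparrow+\infty$ along countable sequences, $\Haus{n-\alpha}(N)=0$; it thus suffices to prove $\redb^\alpha_\eff E\subset N$. Let $x\in\redb^\alpha_\eff E$ and assume $x\notin N$, so $\rho^{\alpha-n}\mu(B_\rho(x))\to0$. For any $F\in\tang(E,x)$ with $\chi_{(E-x)/r_k}\to\chi_F$ in $L^1_\loc$, \cref{res:convergence}\ref{item:conv_blow_up_tot_var_ball} together with $r_k^{\alpha-n}\mu(B_{r_kR}(x))=R^{n-\alpha}(r_kR)^{\alpha-n}\mu(B_{r_kR}(x))$ gives $\abs*{D^\alpha\chi_F}(B_R)=0$ for a.e.\ $R>0$, hence $D^\alpha\chi_F=0$, hence $D\chi_F=0$ by \cref{res:null_derivativel} applied componentwise, hence $\chi_F$ is a.e.\ constant: every tangent set at $x$ is trivial. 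But then the continuous function $\rho\mapsto\abs*{E\cap B_\rho(x)}/\abs*{B_\rho(x)}$ can have no limit point in $(0,1)$ as $\rho\to0^+$ (such a limit point would yield, via the compactness in the proof of \cref{res:old_blowup}, a non-trivial tangent set), and it cannot have both $0$ and $1$ as limit points (the intermediate value theorem would then give $\rho_m\to0^+$ with density $\tfrac12$, again a non-trivial tangent set); hence this density converges to $0$ or to $1$, i.e.\ $x\in E^0\cup E^1$, contradicting $x\in\de^*E$. Therefore $x\in N$, and $\Haus{n-\alpha}(\redb^\alpha_\eff E)=0$.

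For item~\ref{item:eff_BV}, recall that $\chi_E\in BV_\loc(\R^n)$ implies $\chi_E\in BV^{\alpha,\infty}_\loc(\R^n)$ with $D^\alpha\chi_E=\nabla^\alpha\chi_E\,\Leb{n}$, where $\nabla^\alpha\chi_E$ is the Riesz potential of order $1-\alpha$ of $D\chi_E=\nu_E\,\Haus{n-1}\mres\redb E$ (see \cite{Comi-Stefani19} and \cite{Silhavy20}). Fix $x\in\redb E$; by De Giorgi's Blow-up Theorem (see \cite{Maggi12}), $\tang(E,x)=\set*{H^+_{\nu_E(x)}}$. Using the explicit formula for $\nabla^\alpha\chi_E$ and that $x$ is a point of approximate continuity of $\nu_E$ with respect to $\Haus{n-1}\mres\redb E$, one checks that $\abs*{D^\alpha\chi_E}(B_\rho(x))>0$ for all $\rho>0$ (the density $\nabla^\alpha\chi_E$ diverges near $x$ along $\nu_E(x)$) and, arguing as in the proof of \cref{res:convergence}, that $D^\alpha\chi_{(E-x)/r}\weakto D^\alpha\chi_{H^+_{\nu_E(x)}}$ and $\abs*{D^\alpha\chi_{(E-x)/r}}\weakto\abs*{D^\alpha\chi_{H^+_{\nu_E(x)}}}$ with convergence of the masses on $B_1$; as $\nabla^\alpha\chi_{H^+_\nu}$ is a positive multiple of $\nu$ at every point, passing to the limit in $D^\alpha\chi_E(B_r(x))/\abs*{D^\alpha\chi_E}(B_r(x))=D^\alpha\chi_{(E-x)/r}(B_1)/\abs*{D^\alpha\chi_{(E-x)/r}}(B_1)$ yields $\nu_E(x)$. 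Hence $x\in\redb^\alpha E$ and $\nu^\alpha_E(x)=\nu_E(x)$; since also $\redb E\subset\de^*E$, this gives $\redb E\subset\redb^\alpha_\eff E$ and $\nu^\alpha_E=\nu_E$ on $\redb E$. (Once $\redb E\subset\redb^\alpha E$ is available, the equality of normals also follows from item~\ref{item:eff_BV_frac}, the unique blow-up $H^+_{\nu_E(x)}$ being forced to equal $H^+_{\nu^\alpha_E(x)}$.) Finally $\redb^\alpha_\eff E\setminus\redb E\subset\de^*E\setminus\redb E$, which is $\Haus{n-1}$-negligible by Federer's theorem (see \cite{Maggi12}), so $\Haus{n-1}(\redb^\alpha_\eff E\setminus\redb E)=0$.

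The most delicate point is the covering claim in item~\ref{item:eff_BV_frac}: since $\chi_E$ need not have finite classical perimeter, De Giorgi's Structure Theorem cannot be invoked, and the rectifiability of $\Sigma_E$ has to be extracted by hand from the sole fact that the blow-up at each of its points is a half-space.
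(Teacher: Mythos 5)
Your proposal is correct, and in two of the three items it genuinely departs from the paper's route. For item~(\ref{item:eff_BV_frac}) the skeleton is the same (cone property of the unique blow-up, then \cref{res:blowup} to force a half-space), but where the paper simply cites \cite{Leonardi00}*{Prop.~2.1} for the dilation invariance and \cite{DelNin21}*{Th.~1.2} for the covering by Lipschitz graphs, you re-derive both by hand: the dilation argument is exactly Leonardi's, and your decomposition of $\Sigma_E$ by normal direction, blow-up scale and density scale, followed by the two-ball comparison yielding the cone condition $\abs*{(y-x)\cdot\nu^\alpha_E(x)}\le c\abs*{x-y}$, is precisely the standard proof underlying Del Nin's theorem; this makes the argument self-contained at the cost of length. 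For item~(\ref{item:eff_W}) your proof is entirely different: instead of invoking \cite{Ponce-Spector20}*{Prop.~3.1} (which gives the stronger conclusion $\Haus{n-\alpha}(\de^*E)=0$), you combine the density comparison theorem for $|D^\alpha\chi_E|\ll\Leb{n}$ with the observation that vanishing $(n-\alpha)$-density forces, via \cref{res:convergence}\eqref{item:conv_blow_up_tot_var_ball} and \cref{res:null_derivativel}, all tangent sets to be trivial and hence $x\in E^0\cup E^1$. This buys something: your argument only uses $|D^\alpha\chi_E|\ll\Leb{n}$, a hypothesis a priori weaker than $\chi_E\in W^{\alpha,1}_\loc(\R^n)$ (the paper points out that the converse implication is open), whereas the paper's citation controls the whole essential boundary rather than just $\redb^\alpha_\eff E$. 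Item~(\ref{item:eff_BV}) follows the paper's proof essentially verbatim. The one point you should tighten is the application of \cref{res:convergence} at $x\in\redb E$ in item~(\ref{item:eff_BV}): that theorem is stated for $x\in\redb^\alpha E$, which is what you are trying to prove, so the uniform local mass bound $|D^\alpha\chi_{(E-x)/r}|(B_R)\le CR^{n-\alpha}$ needed for the weak convergence must be supplied independently (it follows from the upper perimeter density bound at $x\in\redb E$ and the representation of $\nabla^\alpha\chi_E$ as a Riesz potential of $D\chi_E$); you gesture at this with ``arguing as in the proof'' and the explicit formula, and the paper glosses over the same point, so I regard it as a presentational issue rather than a gap.
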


The proof of \cref{res:eff_bello} combines \cref{def:eff} and the properties of the fractional reduced boundary established so far with several known results concerning blow-ups.
Point~\eqref{item:eff_BV_frac} is a consequence of \cref{res:blowup}, \cite{Leonardi00}*{Prop.~2.1} and~\cite{DelNin21}*{Th.~1.2}.
Point~\eqref{item:eff_W} exploits~\cite{Ponce-Spector20}*{Prop.~3.1}.
Finally, point~\eqref{item:eff_BV} follows from well-known properties of sets with locally finite perimeter as soon as the inclusion $\redb E\subset\redb^\alpha E$ is proved. 

In the proof of \cref{res:eff_bello} we exploit the following result, which can be easily deduced from~\cite{Comi-Stefani23-Fail}*{Prop.~1.8} and in fact provides a notable example for point~\eqref{item:eff_BV} in \cref{res:eff_bello}.

\begin{proposition}[Half-space] \label{res:halfspace}
If $x_0 \in \R^n$ and $\nu \in\mathbb S^{n-1}$, then
\begin{equation*} \nabla^{\alpha} \chi_{H_{\nu}^{+}(x_0)}(x) = \frac{\mu_{1, \alpha}}{\alpha} \frac{\nu}{|(x-x_0) \cdot \nu|^{\alpha}} 
\quad  
\text{for all}\ x \notin H_{\nu}(x_0),
\end{equation*}
with $\redb^{\alpha} H^{+}_{\nu}(x_0) = \R^{n}$,
$\redb^{\alpha}_\eff H^{+}_{\nu}(x_0) = H_{\nu}(x_0)$ 
and 
$\nu_{H^{+}_{\nu}(x_0)}^{\alpha}=\nu$ on~$\R^n$.
\end{proposition}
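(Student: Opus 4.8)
The plan is to reduce everything to the one-dimensional computation of the fractional gradient of a half-line, which is already available (essentially) from the cited~\cite{Comi-Stefani23-Fail}*{Prop.~1.8}, and then transport it to $\R^n$ by exploiting the rotational and translational covariance of the operator~\eqref{eq:def_nabla_alpha}. First I would normalize: by translating we may assume $x_0=0$, and by composing with a rotation $R\in SO(n)$ sending $\e_n$ to $\nu$ we may assume $\nu=\e_n$, since $\nabla^\alpha$ commutes with rigid motions (this is immediate from the kernel $|y-x|^{-n-\alpha-1}(y-x)$ being equivariant). Under this reduction $H_\nu^+(x_0)$ becomes $\R^{n-1}\times[0,+\infty)$, whose indicator is $\chi_{\R^{n-1}}(x')\,\chi_{[0,+\infty)}(x_n)$; integrating the $y'$-variables in~\eqref{eq:def_nabla_alpha} against the kernel (which, after a shift in $y'$, produces the $n=1$ kernel up to the constant ratio $\mu_{n,\alpha}/\mu_{1,\alpha}$ coming from the Beta-function identity relating $\mu_{n,\alpha}$ to $\mu_{1,\alpha}$) collapses the problem to $\nabla^\alpha\chi_{[0,+\infty)}$ on $\R$. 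That last quantity is computed in closed form in~\cite{Comi-Stefani23-Fail}*{Prop.~1.8}: it equals $\frac{\mu_{1,\alpha}}{\alpha}\,|t|^{-\alpha}\e_1$ for $t\neq0$. Reversing the rotation gives the stated formula $\nabla^\alpha\chi_{H_\nu^+(x_0)}(x)=\frac{\mu_{1,\alpha}}{\alpha}\,|(x-x_0)\cdot\nu|^{-\alpha}\,\nu$ for $x\notin H_\nu(x_0)$.

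Once the pointwise formula is in hand, the boundary assertions follow quickly. Since $x\mapsto|(x-x_0)\cdot\nu|^{-\alpha}$ is in $L^1_\loc(\R^n)$ (the singularity on the hyperplane $H_\nu(x_0)$ is integrable because $\alpha<1$), we have $\chi_{H_\nu^+(x_0)}\in BV^{\alpha,\infty}_\loc(\R^n)$ with $D^\alpha\chi_{H_\nu^+(x_0)}=\nabla^\alpha\chi_{H_\nu^+(x_0)}\,\Leb{n}\ll\Leb{n}$, and $|D^\alpha\chi_{H_\nu^+(x_0)}|=\frac{\mu_{1,\alpha}}{\alpha}|(x-x_0)\cdot\nu|^{-\alpha}\,\Leb{n}$. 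Because the vector field $\nabla^\alpha\chi_{H_\nu^+(x_0)}$ is \emph{everywhere parallel to the constant vector $\nu$} (and points in the direction $\nu$, as $\frac{\mu_{1,\alpha}}{\alpha}|(x-x_0)\cdot\nu|^{-\alpha}>0$), the difference quotient $D^\alpha\chi_{H_\nu^+(x_0)}(B_r(x))/|D^\alpha\chi_{H_\nu^+(x_0)}|(B_r(x))$ equals $\nu$ for \emph{every} $x$ and every $r>0$; in particular the limit as $r\to0^+$ exists and equals $\nu$, and $\supp|D^\alpha\chi_{H_\nu^+(x_0)}|=\R^n$. Hence $\redb^\alpha H_\nu^+(x_0)=\R^n$ and $\nu^\alpha_{H_\nu^+(x_0)}\equiv\nu$ on $\R^n$. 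For the effective reduced boundary, note that $H_\nu^+(x_0)$ is (up to null sets) a half-space, so its points of density $1$ form the open half-space $\{(y-x_0)\cdot\nu>0\}$, its points of density $0$ form $\{(y-x_0)\cdot\nu<0\}$, and $\de^*H_\nu^+(x_0)=H_\nu(x_0)$; therefore $\redb^\alpha_\eff H_\nu^+(x_0)=\redb^\alpha H_\nu^+(x_0)\cap\de^*H_\nu^+(x_0)=H_\nu(x_0)$.

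The only genuinely computational point is the dimensional reduction in the first paragraph — verifying that integrating out the $n-1$ tangential variables in~\eqref{eq:def_nabla_alpha} produces exactly the one-dimensional gradient with the correct constant $\mu_{1,\alpha}$ (so that the $n$-dependence cancels). This hinges on the identity $\mu_{n,\alpha}\int_{\R^{n-1}}(|z'|^2+s^2)^{-(n+\alpha+1)/2}\di z'=\mu_{1,\alpha}\,|s|^{-\alpha-2}$ (up to checking the relation between $\mu_{n,\alpha}$ and $\mu_{1,\alpha}$ through the Gamma-function factors), which is a standard Beta-integral; alternatively, one can bypass it entirely by invoking~\cite{Comi-Stefani23-Fail}*{Prop.~1.8} directly if that statement is already phrased in $\R^n$, in which case this Proposition is essentially a restatement plus the elementary boundary bookkeeping of the second paragraph. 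I expect the boundary-structure part to be routine; the main (mild) obstacle is simply being careful with the renormalization constants in the reduction.
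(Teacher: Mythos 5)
Your proposal is correct and coincides with what the paper intends: the paper offers no separate proof of this proposition, stating only that it "can be easily deduced from \cite{Comi-Stefani23-Fail}*{Prop.~1.8}", and your rigid-motion reduction to the one-dimensional half-line formula (via the Beta-function identity relating $\mu_{n,\alpha}$ to $\mu_{1,\alpha}$, the same identity the paper works out explicitly in its proof of the splitting proposition) together with the elementary boundary bookkeeping is exactly that deduction. The key observations — local integrability of $|(x-x_0)\cdot\nu|^{-\alpha}$ for $\alpha<1$, constancy of the direction of the density so that the ratio $D^\alpha\chi_{H^+_\nu(x_0)}(B_r(x))/|D^\alpha\chi_{H^+_\nu(x_0)}|(B_r(x))$ equals $\nu$ for every $x$ and $r$, and $\partial^*H^+_\nu(x_0)=H_\nu(x_0)$ — are all sound.
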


\cref{res:halfspace}, combined with \cref{res:convergence,res:eff_bello}\eqref{item:eff_BV_frac}, implies the following result.

\begin{corollary}[Refined convergence on $\Sigma_E$]
Let $\chi_E\in BV^{\alpha,\infty}_{\loc}(\R^n)$ and $x\in\Sigma_E$.
If $\chi_{\frac{E-x}{r_k}}\to\chi_{H^+_{\nu^\alpha_E(x)}}$ in $L^1_{\loc}(\R^n)$ as $r_k\to0^+$, then, up to extracting a subsequence, as $r_{k} \to 0^+$,
\begin{align*}
D^{\alpha} \chi_{\frac{E - x}{r_{k}}} &\weakto \frac{\mu_{1, \alpha}}{\alpha} \frac{\nu^\alpha_E(x)}{|(\,\cdot\,) \cdot \nu^\alpha_E(x)|^{\alpha}} \,\Leb{n}
\quad
\text{in $\M_{\loc}(\R^{n}; \R^{n})$},
\\[1ex]
|D^{\alpha} \chi_{\frac{E - x}{r_{k}}}| &\weakto \frac{\mu_{1, \alpha}}{\alpha} \frac{1}{|(\,\cdot\,) \cdot \nu^\alpha_E(x)|^{\alpha}}\,\Leb{n}
\quad
\text{in $\M_\loc(\R^{n})$}.
\end{align*}
\end{corollary}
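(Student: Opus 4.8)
The plan is to read off the statement from \cref{res:eff_bello}\eqref{item:eff_BV_frac}, \cref{res:convergence} and \cref{res:halfspace}, the only real ingredient being the explicit form of the fractional variation of a half-space. First I would note that, since $x\in\Sigma_E\subset\redb^\alpha_\eff E$ and, by the very definition of $\Sigma_E$, the set $E$ admits a unique blow-up at $x$, \cref{res:eff_bello}\eqref{item:eff_BV_frac} forces $\tang(E,x)=\set*{H^+_{\nu^\alpha_E(x)}}$. In particular the convergence $\chi_{\frac{E-x}{r_k}}\to\chi_{H^+_{\nu^\alpha_E(x)}}$ in $L^1_\loc(\R^n)$ assumed in the statement holds along \emph{any} infinitesimal sequence $r_k\to0^+$, and $F:=H^+_{\nu^\alpha_E(x)}$ is a genuine element of $\tang(E,x)$. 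This puts us exactly in the setting of \cref{res:convergence} with this choice of $F$, whence, up to extracting a subsequence, $D^\alpha\chi_{\frac{E-x}{r_k}}\weakto D^\alpha\chi_F$ in $\M_\loc(\R^n;\R^n)$ and $|D^\alpha\chi_{\frac{E-x}{r_k}}|\weakto|D^\alpha\chi_F|$ in $\M_\loc(\R^n)$ as $r_k\to0^+$, by points~\eqref{item:weak_conv_blow_up} and~\eqref{item:weak_conv_blow_up_tot_var} of that theorem.

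It then remains to compute $D^\alpha\chi_F$ and $|D^\alpha\chi_F|$ explicitly. By \cref{res:halfspace}, applied with $x_0=0$ and $\nu=\nu^\alpha_E(x)$, the half-space $F=H^+_{\nu^\alpha_E(x)}$ satisfies $\chi_F\in BV^{\alpha,\infty}_\loc(\R^n)$ with $D^\alpha\chi_F=\nabla^\alpha\chi_F\,\Leb{n}$, where $\nabla^\alpha\chi_F(y)=\frac{\mu_{1,\alpha}}{\alpha}\,\frac{\nu^\alpha_E(x)}{|y\cdot\nu^\alpha_E(x)|^{\alpha}}$ for every $y\notin H_{\nu^\alpha_E(x)}$; here one uses that the hyperplane $H_{\nu^\alpha_E(x)}$ is $\Leb{n}$-negligible and that $y\mapsto|y\cdot\nu^\alpha_E(x)|^{-\alpha}$ lies in $L^1_\loc(\R^n)$ because $\alpha\in(0,1)$, so the right-hand side is a well-defined measure in $\M_\loc(\R^n;\R^n)$. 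Consequently
\begin{equation*}
D^\alpha\chi_F=\frac{\mu_{1,\alpha}}{\alpha}\,\frac{\nu^\alpha_E(x)}{|(\,\cdot\,)\cdot\nu^\alpha_E(x)|^{\alpha}}\,\Leb{n},
\end{equation*}
and, since $\mu_{1,\alpha}/\alpha>0$ and $|\nu^\alpha_E(x)|=1$, taking the total variation of this absolutely continuous vector measure gives
\begin{equation*}
|D^\alpha\chi_F|=\frac{\mu_{1,\alpha}}{\alpha}\,\frac{1}{|(\,\cdot\,)\cdot\nu^\alpha_E(x)|^{\alpha}}\,\Leb{n}.
\end{equation*}
Inserting these two identities into the weak-$*$ limits found in the previous step yields the assertion.

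The proof is essentially bookkeeping, so I do not anticipate a substantial obstacle; the only points deserving attention are that \cref{res:convergence} provides convergence of the fractional variations, and of their total variations, only along a subsequence — which is exactly why the statement carries the clause ``up to extracting a subsequence'' — and that the weak-$*$ limit of $|D^\alpha\chi_{\frac{E-x}{r_k}}|$ must be identified with $|D^\alpha\chi_F|$ itself rather than with some measure merely dominated by it, this being the content of point~\eqref{item:weak_conv_blow_up_tot_var}. Were one to aim for convergence of the whole family as $r\to0^+$, it would suffice to remark that the limit measures above are independent of the subsequence extracted, but this refinement is not required for the statement.
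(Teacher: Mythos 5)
Your proof is correct and follows exactly the route the paper has in mind: the text introducing the corollary states that it is obtained by combining Proposition~\ref{res:halfspace}, Theorem~\ref{res:convergence}, and Theorem~\ref{res:eff_bello}\eqref{item:eff_BV_frac}, which is precisely what you do. The only genuinely substantive step is identifying $D^\alpha\chi_{H^+_{\nu^\alpha_E(x)}}$ as the absolutely continuous measure with density $\frac{\mu_{1,\alpha}}{\alpha}|(\,\cdot\,)\cdot\nu^\alpha_E(x)|^{-\alpha}\nu^\alpha_E(x)$, and you handle the local integrability (needed to make sense of this as a measure in $\M_\loc$) and the passage to the total variation correctly. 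Your side remark that the hypothesis $\chi_{\frac{E-x}{r_k}}\to\chi_{H^+_{\nu^\alpha_E(x)}}$ is in fact automatic for $x\in\Sigma_E$ along any sequence, thanks to the nonemptiness and precompactness of $\tang(E,x)$ from Theorem~\ref{res:old_blowup} and the uniqueness from Theorem~\ref{res:eff_bello}\eqref{item:eff_BV_frac}, is accurate but not needed for the statement.
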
 

We conclude our paper with the following result, which is a simple consequence of~\cite{Comi-Stefani19}*{Th.~3.18} and provides another important example for point~\eqref{item:eff_BV} in \cref{res:eff_bello}. 

\begin{proposition}[Ball]
\label{res:ball}
If $x_0\in\R^n$ and $r>0$, then
\begin{equation*}
\nabla^{\alpha} \chi_{B_{r}(x_0)}(x) = - \frac{\mu_{n, \alpha}}{n + \alpha - 1} \int_{\de B_{r}(x_0)} \frac{y}{|x-y|^{n + \alpha - 1}} \di \Haus{n - 1}(y)
\quad
\text{for all}\ x\notin\de B_r(x_0),
\end{equation*}
with $\redb^\alpha B_r(x_0)=\R^n\setminus\set*{x_0}$,
$\redb^\alpha_\eff B_r(x_0)=\de B_r(x_0)$
and 
$\nu^\alpha_{B_r(x_0)}(x)=-\tfrac{x-x_0}{|x-x_0|}$ for all $x\ne x_0$.
\end{proposition}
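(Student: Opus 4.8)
The plan is to reduce the statement to a direct computation of $\nabla^\alpha\chi_{B_r(x_0)}$ that already appears in the literature, and then read off the reduced and effective boundaries from the explicit formula. First I would invoke \cite{Comi-Stefani19}*{Th.~3.18}: since $\chi_{B_r(x_0)}\in W^{\alpha,1}(\R^n)$ (indeed $\de B_r(x_0)$ is a smooth compact hypersurface, so $P_\alpha(B_r(x_0))<+\infty$), the fractional gradient $\nabla^\alpha\chi_{B_r(x_0)}$ is representable via the $\alpha$-Riesz potential of the distributional gradient $D\chi_{B_r(x_0)}=-\nu_{B_r(x_0)}\,\Haus{n-1}\mres\de B_r(x_0)$, where $\nu_{B_r(x_0)}(y)=\tfrac{y-x_0}{r}$ is the outer normal. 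Concretely, $\nabla^\alpha u=\nabla I_{1-\alpha}u$ up to the renormalization constant, and for $u=\chi_{B_r(x_0)}$ one gets $\nabla^\alpha\chi_{B_r(x_0)}(x)=c_{n,\alpha}\int_{\de B_r(x_0)}\tfrac{x-y}{|x-y|^{n+\alpha}}\cdot(\text{normal})$-type expressions; carrying out the constants (using $\mu_{n,\alpha}$ and the identity relating $I_{1-\alpha}$'s normalization to $\tfrac{1}{n+\alpha-1}$) yields exactly the stated formula
\[
\nabla^{\alpha} \chi_{B_{r}(x_0)}(x) = - \frac{\mu_{n, \alpha}}{n + \alpha - 1} \int_{\de B_{r}(x_0)} \frac{y}{|x-y|^{n + \alpha - 1}} \di \Haus{n - 1}(y),
\quad x\notin\de B_r(x_0).
\]
By translation invariance of all operators involved it suffices to treat $x_0=0$, replacing $y$ by $y$ throughout.

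Next I would identify the reduced boundary. Since $\chi_{B_r(x_0)}\in W^{\alpha,1}(\R^n)$, we have $D^\alpha\chi_{B_r(x_0)}=\nabla^\alpha\chi_{B_r(x_0)}\,\Leb{n}$, so $\supp|D^\alpha\chi_{B_r(x_0)}|$ is the closure of the set where the (continuous on $\R^n\setminus\de B_r(x_0)$) field $\nabla^\alpha\chi_{B_r(x_0)}$ is nonzero. A symmetry argument shows this field never vanishes off $\de B_r(x_0)$: for $x\ne x_0$ the integral $\int_{\de B_r(x_0)}\tfrac{y-x_0}{|x-y|^{n+\alpha-1}}\di\Haus{n-1}(y)$ points in the direction $\tfrac{x-x_0}{|x-x_0|}$ by rotational symmetry about the axis through $x_0$ and $x$, and the scalar coefficient is strictly positive because the integrand's component along that axis does not change sign (the near side of the sphere dominates). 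Hence $\nabla^\alpha\chi_{B_r(x_0)}(x)=-\tfrac{x-x_0}{|x-x_0|}\,g(|x-x_0|)$ for some $g>0$, so $\redb^\alpha B_r(x_0)=\R^n\setminus\{x_0\}$ (the point $x_0$ is excluded because there the blow-up direction is undefined — indeed $x_0\in B_r(x_0)^1$, consistent with $|D^\alpha\chi_{B_r(x_0)}|\ll\Leb{n}$ having no atom there) and $\nu^\alpha_{B_r(x_0)}(x)=-\tfrac{x-x_0}{|x-x_0|}$. This also gives the limiting behavior $|\nabla^\alpha\chi_{B_r(x_0)}(x)|\sim c\,\mathrm{dist}(x,\de B_r(x_0))^{-\alpha}$ as $x\to\de B_r(x_0)$, matching the half-space model of \cref{res:halfspace}.

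Finally, for the effective fractional reduced boundary: by \cref{def:eff}, $\redb^\alpha_\eff B_r(x_0)=\redb^\alpha B_r(x_0)\cap\de^* B_r(x_0)$. Since $B_r(x_0)$ is open with smooth boundary, $B_r(x_0)^1=B_r(x_0)$, $B_r(x_0)^0=(\overline{B_r(x_0)})^c$, and $\de^* B_r(x_0)=\de B_r(x_0)$; intersecting with $\redb^\alpha B_r(x_0)=\R^n\setminus\{x_0\}$ and noting $x_0\notin\de B_r(x_0)$ gives $\redb^\alpha_\eff B_r(x_0)=\de B_r(x_0)$, on which $\nu^\alpha_{B_r(x_0)}(x)=-\tfrac{x-x_0}{|x-x_0|}$ restricts to the inner unit normal, as expected for a point of $\redb^\alpha_\eff$. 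I expect the main obstacle to be the bookkeeping of the renormalization constants when passing from the Riesz-potential representation of \cite{Comi-Stefani19}*{Th.~3.18} to the surface-integral formula with the precise factor $\tfrac{\mu_{n,\alpha}}{n+\alpha-1}$, together with a careful justification that the surface integral is genuinely finite and continuous for $x\notin\de B_r(x_0)$ (the integrand has the locally integrable singularity $|x-y|^{-(n+\alpha-1)}$ on the $(n-1)$-dimensional sphere when $x\in\de B_r(x_0)$, but is bounded away from it); the sign/nonvanishing argument is routine once the symmetry reduction to the axial direction is set up.
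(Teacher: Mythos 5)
Your proposal follows essentially the same route as the paper: cite \cite{Comi-Stefani19}*{Th.~3.18} for the explicit surface-integral formula, use rotational symmetry to write $\nabla^\alpha\chi_{B_r(x_0)}(x) = -\frac{\mu_{n,\alpha}}{n+\alpha-1}g(|x-x_0|)\frac{x-x_0}{|x-x_0|}$ with $g>0$ by a near-hemisphere-dominates argument, and read off the effective reduced boundary from $\de^*B_r(x_0)=\de B_r(x_0)$.

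One point is genuinely misstated: the parenthetical reason you give for excluding $x_0$ from $\redb^\alpha B_r(x_0)$ (namely that $x_0\in B_r(x_0)^1$ and $|D^\alpha\chi_{B_r(x_0)}|\ll\Leb{n}$) does not work. Membership in $E^1$ and absolute continuity of the fractional variation do \emph{not} preclude a point from being in $\redb^\alpha E$ --- indeed by \cref{res:halfspace} the half-space satisfies $\redb^\alpha H^+_\nu(x_0)=\R^n$, which contains plenty of density-one points, and its fractional variation is absolutely continuous. The correct reason is a symmetry cancellation: since $\nabla^\alpha\chi_{B_r(x_0)}(y)=-\frac{\mu_{n,\alpha}}{n+\alpha-1}g(|y-x_0|)\frac{y-x_0}{|y-x_0|}$ is odd about $x_0$, one has $\int_{B_\rho(x_0)}\nabla^\alpha\chi_{B_r(x_0)}\di y=0$ for all $\rho>0$, so the defining limit for $\nu^\alpha_{B_r(x_0)}(x_0)$ is $0$, not a unit vector. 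Relatedly, for $x\in\de B_r(x_0)$ you assert $\nu^\alpha_{B_r(x_0)}(x)=-\frac{x-x_0}{|x-x_0|}$ directly from the pointwise form of the gradient; since that form holds only off $\de B_r(x_0)$, you still need the small estimate
\[
\left|\,\frac{\int_{B_\rho(x)}\left(\tfrac{y-x_0}{|y-x_0|}-\tfrac{x-x_0}{|x-x_0|}\right)g(|y-x_0|)\di y}{\int_{B_\rho(x)}g(|y-x_0|)\di y}\,\right|
\le\sup_{y\in B_\rho(x)}\left|\tfrac{y-x_0}{|y-x_0|}-\tfrac{x-x_0}{|x-x_0|}\right|\to0
\]
as $\rho\to0^+$ (this is exactly what the paper checks, and it is what lets one pass through the possible singularity of $g$ on the sphere). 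With these two points repaired the argument is complete and matches the paper's.
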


\section{Proofs of  the results}

The rest of the paper is devoted to the proofs of our results.
Throughout this section, we let $(\rho_\eps)\subset C^\infty_c(\R^n)$ be a family of standard mollifiers, that is, we let $\rho_\eps(x)=\eps^{-n}\rho(\frac x\eps)$ for $x\in\R^n$ and $\eps>0$, where 
\begin{equation}
\label{eq:kernel}
\rho \in C^\infty_c(\R^n),
\quad
\rho\ge0,
\quad
\rho\ \text{is radial},
\quad
\supp\rho\subset B_1
\quad
\text{and}
\quad
\int_{B_1} \rho \di x = 1.
\end{equation}
In addition, for $x \in \R^n$ and $i \in \{1, \dots, n\}$, we let $\hat{x}_i\in\R^{n-1}$ be defined as 
\begin{equation*}
\hat{x}_i 
=
\begin{cases} 
(x_2, \dots, x_n) 
& \text{if}\ 
i = 1, 
\\
(x_1, \dots, x_{i-1}, x_{i+1}, \dots , x_n) 
& 
\text{if}\ 
i \in \{2, \dots, n-1\}, 
\\
(x_1, \dots, x_{n-1}) 
& 
\text{if}\ i = n.
\end{cases}
\end{equation*}

In some of the proofs below, we will invoke the following result (only for $p=+\infty$).

\begin{lemma}[Smoothing]
\label{res:smoothing}
Let $p\in[1,+\infty]$.
If $f\in BV^{\alpha,p}_\loc(\R^n)$ and $\rho\in C_c^\infty(\R^n)$, then $\rho*f\in BV^{\alpha,p}_\loc(\R^n)\cap C^\infty(\R^n)$ with $\nabla^\alpha(\rho*f)=\rho*D^\alpha f$ in $L^1_\loc(\R^n;\R^n)$.
\end{lemma}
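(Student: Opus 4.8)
The plan is to identify the fractional variation of $\rho*f$ with the absolutely continuous measure $(\rho*D^\alpha f)\,\Leb{n}$, from which every assertion follows at once. First I would record the routine facts: $\rho*f\in L^p(\R^n)\cap C^\infty(\R^n)$ is classical (Young's inequality, together with differentiation under the integral sign), and $\rho*D^\alpha f$ is a well-defined element of $L^1_\loc(\R^n;\R^n)$ because $\rho$ has compact support, say $\supp\rho\subset B_\delta$, while $D^\alpha f\in\M_\loc(\R^n;\R^n)$: indeed, by Tonelli's theorem $\int_{B_R}|\rho*D^\alpha f|\di x\le\|\rho\|_{L^\infty(\R^n)}\,|B_R|\,|D^\alpha f|(B_{R+\delta})<+\infty$ for every $R>0$. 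It then only remains to verify the fractional integration-by-parts formula~\eqref{eq:ibp_BV_frac} for $\rho*f$ against this candidate measure.

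The core of the argument is the commutation of $\div^\alpha$ with convolution by a compactly supported smooth function: for every $\phi\in C^\infty_c(\R^n;\R^n)$,
\begin{equation}
\label{eq:smoothing_commute}
\div^\alpha(\tilde\rho*\phi)=\tilde\rho*\div^\alpha\phi
\quad\text{on}\ \R^n,
\qquad
\text{where}\ \ \tilde\rho(x):=\rho(-x),
\end{equation}
and here $\tilde\rho*\phi\in C^\infty_c(\R^n;\R^n)$. To establish~\eqref{eq:smoothing_commute} I would insert the identity $(\tilde\rho*\phi)(y)-(\tilde\rho*\phi)(x)=\int_{\R^n}\tilde\rho(w)\big(\phi(y-w)-\phi(x-w)\big)\di w$ into the defining formula~\eqref{eq:def_div_alpha} for $\div^\alpha(\tilde\rho*\phi)$, interchange the order of integration, and change variables $y\mapsto y-w$ in the inner integral. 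Fubini's theorem is licit because the resulting double integral is absolutely convergent: near the diagonal the Lipschitz bound $|\phi(y-w)-\phi(x-w)|\le\Lip(\phi)\,|y-x|$ controls the singular kernel by $|y-x|^{1-n-\alpha}$, which is locally integrable precisely because $\alpha\in(0,1)$, whereas far from the diagonal the integrand is dominated by $2\|\phi\|_{L^\infty(\R^n;\,\R^n)}\,|y-x|^{-n-\alpha}$, integrable at infinity, and $\tilde\rho\in L^1(\R^n)$. (Alternatively, \eqref{eq:smoothing_commute} merely expresses the translation invariance of $\div^\alpha$ and could be quoted from the literature on the operators~\eqref{eq:def_nabla_alpha}--\eqref{eq:def_div_alpha}.)

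With~\eqref{eq:smoothing_commute} available, I would fix $\phi\in C^\infty_c(\R^n;\R^n)$ and chain together the elementary self-adjointness of convolution, then~\eqref{eq:smoothing_commute}, then the formula~\eqref{eq:ibp_BV_frac} applied to $f$ with the admissible test field $\tilde\rho*\phi$ (legitimate since $f\in BV^{\alpha,p}_\loc(\R^n)$), and finally Tonelli's theorem once more:
\begin{align*}
\int_{\R^n}(\rho*f)\,\div^\alpha\phi\di x
&=\int_{\R^n}f\,\big(\tilde\rho*\div^\alpha\phi\big)\di x
=\int_{\R^n}f\,\div^\alpha(\tilde\rho*\phi)\di x\\
&=-\int_{\R^n}(\tilde\rho*\phi)\cdot \di D^\alpha f
=-\int_{\R^n}\phi\cdot(\rho*D^\alpha f)\di x,
\end{align*}
where the manipulations involving $\div^\alpha\phi$ exploit that $\div^\alpha\phi\in L^1(\R^n)\cap L^\infty(\R^n)$. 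Since $\phi$ is arbitrary, this identifies $D^\alpha(\rho*f)=(\rho*D^\alpha f)\,\Leb{n}$, so $\rho*f\in BV^{\alpha,p}_\loc(\R^n)$, its fractional variation is absolutely continuous, and its density is $\nabla^\alpha(\rho*f)=\rho*D^\alpha f$ in $L^1_\loc(\R^n;\R^n)$; I would add the consistency remark that, $\rho*f$ being bounded and globally Lipschitz, the pointwise integral~\eqref{eq:def_nabla_alpha} converges at every point for $\rho*f$ and returns exactly this density.

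The step I expect to be the main obstacle is the rigorous justification of Fubini's theorem in the proof of~\eqref{eq:smoothing_commute}; this is in fact the only place where the assumption $\alpha\in(0,1)$ is genuinely used, entering through the local integrability of the fractional kernel tested against a Lipschitz increment. Everything else reduces to bookkeeping of convolutions and a single use of~\eqref{eq:ibp_BV_frac}.
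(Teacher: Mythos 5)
Your proof is correct and follows essentially the same route as the paper: test $\rho*f$ against $\phi\in C^\infty_c(\R^n;\R^n)$, commute $\div^\alpha$ with convolution, and invoke the integration-by-parts formula~\eqref{eq:ibp_BV_frac} to identify $D^\alpha(\rho*f)=(\rho*D^\alpha f)\,\Leb{n}$. The only differences are cosmetic: the paper cites the commutation identity from \cite{Comi-Stefani19}*{Lem.~3.5} rather than re-deriving it via Fubini as you do, and it writes $\rho$ in place of your reflection $\tilde\rho$ (harmless there since the mollifiers in play are radial, but your version is the correct one for a general $\rho\in C^\infty_c(\R^n)$).
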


\begin{proof}
Clearly, $\rho*f\in W^{1,p}(\R^n)$, hence $\nabla^\alpha(\rho*f)\in L^p(\R^n;\R^n)$ by~\cite{Comi-Stefani22-A}*{Prop.~3.3}. 
Moreover, $\rho*D^\alpha f\in L^1_\loc(\R^n;\R^n)$. 
Given $\phi\in C^\infty_c(\R^n;\R^n)$, we have $\rho*\phi\in C^\infty_c(\R^n;\R^n)$ and thus 
\begin{equation*}
\int_{\R^n}(\rho*f)\,\div^\alpha\phi\di x
=
\int_{\R^n}f\,(\rho*\div^\alpha\phi)\di x
=
\int_{\R^n}f\,\div^\alpha(\rho*\phi)\di x
=
-\int_{\R^n}(\rho*\phi) \, \di D^\alpha f,
\end{equation*}
thanks to \cite{Comi-Stefani19}*{Lem.~3.5}, readily yielding $\nabla^\alpha(\rho*f)=\rho*D^\alpha f$ in $L^1_{\loc}(\R^n;\R^n)$.
\end{proof}

\subsection{Proof of \texorpdfstring{\cref{res:cap_W}}{Theorem 1.1}}

Given $\phi\in C^\infty_c(\R^n;\R^n)$, since $P_\alpha(F)<+\infty$, arguing as in the proof of \cite{Comi-Stefani22-L}*{Lem.~3.2}, we can write 
\begin{equation}
\label{eq:leibniz_BV_easy}
\div^\alpha(\phi\chi_F)
=
\chi_F\,\div^\alpha\phi
+\phi\cdot\nabla^\alpha\chi_F
+
\div^\alpha_\NL(\chi_F,\phi)
\quad
\text{in}\ L^1(\R^n).
\end{equation}
Let us observe that 
\begin{equation*}
\bigg|
\int_{\R^n}\chi_E\,\phi\cdot\nabla^\alpha\chi_F\di x
\,\bigg|
\le 
\|\phi\|_{L^\infty(\R^n;\,\R^n)}
\,
\|\nabla^\alpha\chi_F\|_{L^1(\R^n;\,\R^n)}.
\end{equation*}
Moreover, by~\cite{Comi-Stefani22-L}*{Lem.~2.9}, we have  
\begin{equation}
\label{eq:NL_ibp}
\int_{\R^n}\chi_E\,\div^\alpha_\NL(\chi_F,\phi)\di x
=
\int_{\R^n}
\phi\cdot\nabla^\alpha_\NL(\chi_E,\chi_F)\di x,
\end{equation}
so that
\begin{equation*}
\bigg|
\int_{\R^n}\chi_E\,\div^\alpha_\NL(\chi_F,\phi)\di x
\,\bigg|
\le 
\|\phi\|_{L^\infty(\R^n;\,\R^n)}
\,
\|\nabla^\alpha_\NL(\chi_E,\chi_F)\|_{L^1(\R^n;\,\R^n)}.
\end{equation*} 
Recalling the definitions in~\eqref{eq:def_frac_per} and~\eqref{eq:def_nabla_NL}, as in~\cite{Comi-Stefani22-L}*{Cors.~2.3 and~2.7} (since $|\chi_E(x)-\chi_E(y)|\le1$ for $x,y\in\R^n$) we can estimate
\begin{equation*}
\max\set*{\|\nabla^\alpha\chi_F\|_{L^1(\R^n;\,\R^n)},\|\nabla^\alpha_\NL(\chi_E,\chi_F)\|_{L^1(\R^n;\,\R^n)}}
\le 
\mu_{n,\alpha}\,P_\alpha(F),
\end{equation*}
in particular proving~\eqref{eq:cap_BV_L1_part_est}.
A straightforward application of~\cite{Comi-Stefani22-L}*{Lem.~2.9} (by taking $p,r=+\infty$, $q=1$, $f=\chi_E$, $g=\chi_F$ and $\phi\equiv1$ in that result, and observing that $b^\alpha_{1,1}(\R^n)$ coincides with the space of $L^1_\loc(\R^n)$ functions with finite $W^{\alpha,1}$-seminorm, see~\cite{Comi-Stefani22-L}*{Sec.~2.1}) gives~\eqref{eq:zero_average_nabla_NL_E_F}.
Now, integrating~\eqref{eq:leibniz_BV_easy} and exploiting~\eqref{eq:NL_ibp}, we get
\begin{equation*}
\int_{\R^n}
\chi_{E\cap F}\,\div^\alpha\phi
\di x
=
\int_E
\div^\alpha(\phi\chi_F)
\di x
-
\int_E
\phi\cdot\nabla^\alpha\chi_F
\di x
-
\int_{\R^n}
\phi\cdot
\nabla^\alpha_\NL(\chi_E,\chi_F)
\di x
\end{equation*}
Now let $\psi_\eps=\rho_\eps*(\phi\chi_F)$ for all $\eps>0$.
Since $\psi\in C^\infty_c(\R^n)$, we can compute
\begin{equation*}
\int_E\div^\alpha\psi_\eps\di x
=
-
\int_{\R^n}\psi_\eps \cdot \di D^\alpha\chi_E
\end{equation*} 
for all $\eps>0$. 
Since $\psi_\eps\to\phi\chi_F$ in $W^{\alpha,1}(\R^n; \R^n)$ as~$\eps\to0^+$ (see \cite{Comi-Stefani19}*{App.~A}), we have $\div^\alpha\psi_\eps\to\div^\alpha(\phi\chi_F)$ in~$L^1(\R^n)$ as~$\eps\to0^+$ and thus
\begin{equation*}
\lim_{\eps\to0^+}
\int_E\div^\alpha\psi_\eps\di x
=
\int_E\div^\alpha(\phi\chi_F)\di x.
\end{equation*} 
Since $\psi_\eps\to\phi\chi_{F^1}$ $\Haus{n-\alpha}$-a.e.\ in~$\R^n$ as~$\eps\to0^+$ by~\cite{Ponce-Spector20}*{Prop.~3.1} and the fact that $|D^\alpha\chi_E|\ll\Haus{n-\alpha}\mres\redb^\alpha E$ due to~\cite{Comi-Stefani19}*{Cor.~5.4}, we also get that 
\begin{equation*}
\lim_{\eps\to0^+}
\int_{\R^n}\psi_\eps \cdot \di D^\alpha\chi_E
=
\int_{\R^n}
\chi_{F^1} \phi \cdot \di D^\alpha\chi_E.
\end{equation*}
Hence, we get
\begin{equation*}
\int_{\R^n}
\chi_{E\cap F}\,\div^\alpha\phi
\di x
=
-
\int_{\R^n}
\chi_{F^1} \phi \cdot \di D^\alpha\chi_E
-
\int_E
\phi\cdot\nabla^\alpha\chi_F
\di x
-
\int_{\R^n}
\phi\cdot
\nabla^\alpha_\NL(\chi_E,\chi_F)
\di x
\end{equation*}
whenever $\phi\in C^\infty_c(\R^n;\R^n)$, which easily implies~\eqref{eq:cap_BV_leibniz} via a standard approximation argument. The validity of~\eqref{eq:diff_grad_meas_1}, \eqref{eq:diff_grad_meas_2} and~\eqref{eq:schonberger} is an easy consequence of~\eqref{eq:cap_BV_leibniz}. 
Finally, if $F$ is bounded, then $\chi_{E \cap F} \in BV^{\alpha,1}(\R^n)$, so that \cite{Comi-Stefani22-L}*{Lem.~2.5} implies $D^\alpha \chi_{E \cap F}(\R^n) = 0$, and so~\eqref{eq:GG_fract} follows by integrating~\eqref{eq:cap_BV_leibniz} over $\R^n$ and exploiting~\eqref{eq:zero_average_nabla_NL_E_F}.
\qed

\subsection{Proof of \texorpdfstring{\cref{res:conditional_leibniz}}{Theorem 1.3}}

Let $\phi\in C^\infty_c(\R^n;\R^n)$ and set $g_\eps=\rho_\eps*g$ for all $\eps>0$.
Since $g_\eps\in\Lip_b(\R^n)$,  by~\cite{Comi-Stefani22-A}*{Lem.~2.4} we can write 
\begin{equation*}
\div^\alpha(g_\eps\phi)
=
g_\eps\,\div^\alpha\phi
+
\phi\cdot\nabla^\alpha g_\eps
+
\div^\alpha_\NL(g_\eps,\phi)
\quad
\text{in}\ L^1(\R^n)\cap L^\infty(\R^n).
\end{equation*}
Hence, in virtue of~\cite{Comi-Stefani22-L}*{Lem.~2.10} and \cref{res:smoothing}, we have 
\begin{equation}
\label{eq:cardio}
\begin{split}
\int_{\R^n}fg_\eps \div^\alpha\phi\di x
&=
\int_{\R^n}f\,\div^\alpha(g_\eps\phi)\di x
-
\int_{\R^n}f\phi\cdot\nabla^\alpha g_\eps
\di x
-
\int_{\R^n}f\,\div^\alpha_\NL(g_\eps,\phi)\di x
\\
&=
-
\int_{\R^n}g_\eps\phi \cdot \di D^\alpha f
-
\int_{\R^n}\rho_\eps*(f\phi) \cdot \di D^\alpha g
-
\int_{\R^n}g_\eps \div^\alpha_\NL(f,\phi)\di x.
\end{split}
\end{equation}
Since $\div^\alpha\phi,\div^\alpha_\NL(f,\phi)\in L^1(\R^n)$, by the Dominated Convergence Theorem we have
\begin{equation}
\label{eq:rici}
\lim_{\eps\to0^+}
\int_{\R^n}fg_\eps\,\div^\alpha\phi\di x
=
\int_{\R^n}fg\,\div^\alpha\phi\di x,
\end{equation}
\begin{equation}
\label{eq:clag}
\lim_{\eps\to0^+}
\int_{\R^n}g_\eps\,\div^\alpha_\NL(f,\phi)\di x
=
\int_{\R^n}g\,\div^\alpha_\NL(f,\phi)\di x.
\end{equation}
Setting $f_\eps=\rho_\eps*f$ for all $\eps>0$ and letting $R>0$ be such that $\supp\phi\subset B_R$, we also have
\begin{equation*}
\begin{split}
\bigg|
\int_{\R^n}\rho_\eps*(f\phi) \cdot \di D^\alpha g
&-
\int_{\R^n} f_\eps\phi \cdot \di D^\alpha g
\,\bigg|
\le 
\int_{B_{R+1}}|\rho_\eps*(f\phi)
-
f_\eps\phi|\di |D^\alpha g|
\\
&=
\int_{B_{R+1}}\int_{\R^n}\rho_\eps(x-y)|f(y)||\phi(y)-\phi(x)|\di y\di |D^\alpha g|(x)
\\
&\le 
\|f\|_{L^\infty(\R^n)}
\int_{B_{R+1}}\int_{\R^n}\rho_\eps(x-y)|\phi(y)-\phi(x)|\di y\di |D^\alpha g|(x)
\end{split}
\end{equation*}
for all $\eps\in(0,1)$.
Since $\phi$ is uniformly continuous on $\R^n$, we thus get that 
\begin{equation}
\label{eq:gio}
\lim_{\eps\to0^+}
\int_{\R^n}\rho_\eps*(f\phi) \cdot \di D^\alpha g
=
\lim_{\eps\to0^+}
\int_{\R^n} f_\eps\phi \cdot \di D^\alpha g.
\end{equation}
Now, since $|f_\eps(x)| \le \|f\|_{L^\infty(\R^n)}$ for each $x \in \R^n$ and $\eps > 0$, the family $(f_\eps)_{\eps > 0}$ is uniformly bounded in $L^\infty(\R^n,|D^\alpha g|)$. 
Thus, by Banach--Alaoglu Theorem, we can find a sequence $(f_{\eps_k})_{k \in \N}$ and $\bar f\in L^\infty(\R^n,|D^\alpha g|)$ such that $f_{\eps_k} \weakstarto \bar f$ in $L^\infty(\R^n,|D^\alpha g|)$ as $k\to+\infty$. 
Similarly (up to subsequences, which we do not relabel), we can also find a sequence $(g_{\eps_k})_{k \in \N}$ and $\bar g \in L^\infty(\R^n,|D^\alpha f|)$ such that $g_{\eps_k} \weakstarto \bar g$ in $L^\infty(\R^n,|D^\alpha f|)$ as $k\to+\infty$. 
In particular, \eqref{eq:repres_leibniz_BV_loc_bounds} follows by the lower semicontinuity of the $L^\infty$ norm with respect to the weak$^*$ convergence. 
Therefore,  passing to the limit as $k\to+\infty$ in~\eqref{eq:cardio} along the sequence $(\eps_k)_{k\in\N}$ and recalling~\eqref{eq:rici}, \eqref{eq:clag} and~\eqref{eq:gio}, we get 
\begin{equation*}
\int_{\R^n}fg\,\div^\alpha\phi\di x
=
-
\int_{\R^n}\bar g \phi\cdot\di D^\alpha f
-
\int_{\R^n}\bar f \phi\cdot\di D^\alpha g
-
\int_{\R^n}g\,\div^\alpha_\NL(f,\phi)\di x
\end{equation*}
whenever $\phi\in C^\infty_c(\R^n;\R^n)$, readily yielding~\eqref{eq:measure_NL_boh} thanks to \cref{def:weak_NL_grad}, and therefore~\eqref{eq:conditional_leibniz} as long as $fg \in BV^{\alpha, \infty}_{\loc}(\R^n)$. 
To conclude, we thus just need to prove~\eqref{eq:bar}.
To this aim, we observe that, by \cref{lem:pointwise_limit_convolution} below, $f_{\eps_k}(x) \to f^\star(x)$ as $k\to+\infty$ for all $x \in \rap_f$.
Hence, by the Dominated Convergence Theorem, we get
\begin{equation*} 
\int_{\rap_f} \bar f \psi \cdot \di D^\alpha g 
= 
\lim_{k \to + \infty} \int_{\rap_f} f_{\eps_k} \psi \cdot \di D^\alpha g
=
\int_{\rap_f} f^\star \psi \cdot \di D^\alpha g
\end{equation*}
for any $\psi \in C_c(\R^n)$, proving the first half of~\eqref{eq:bar}.
The second half of~\eqref{eq:bar} is similar.\qed

\begin{lemma} \label{lem:pointwise_limit_convolution}
If $u\in L^1_{\loc}(\R^n)$ and  $x\in\rap_u$, then
\begin{equation*}
u^\star(x)
=
\lim_{\eps\to0^+}(\rho_\eps*u)(x).
\end{equation*}
\end{lemma}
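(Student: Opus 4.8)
The plan is to use the radial symmetry of $\rho$ to express the mollifier $\rho_\eps$ as a superposition of normalized indicator functions of balls centered at the origin, which reduces the statement to the only information available at the point $x\in\rap_u$, namely the existence of $\lim_{r\to0^+}\mint{-}_{B_r(x)}u$. Concretely, write $\rho(z)=\tilde\rho(|z|)$ for $z\in\R^n$, where $\tilde\rho\in C^\infty([0,+\infty))$ with $\supp\tilde\rho\subset[0,1]$, so that in particular $\tilde\rho(1)=0$, and where the normalization in~\eqref{eq:kernel} reads $\Haus{n-1}(\partial B_1)\int_0^1 s^{n-1}\tilde\rho(s)\di s=1$. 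Since $\tilde\rho(1)=0$, for every $s\in[0,1]$ one has the layer-cake identity $\tilde\rho(s)=\int_0^1\chi_{[0,\sigma)}(s)\,(-\tilde\rho'(\sigma))\di\sigma$, hence, after rescaling,
\[
\rho_\eps(z)
=
\int_0^1(-\tilde\rho'(\sigma))\,\eps^{-n}\chi_{B_{\eps\sigma}}(z)\di\sigma
=
\int_0^1(-\tilde\rho'(\sigma))\,|B_1|\,\sigma^n\,\frac{\chi_{B_{\eps\sigma}}(z)}{|B_{\eps\sigma}|}\di\sigma
\quad
\text{for a.e.}\ z\in\R^n.
\]

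Next I would insert this identity into the convolution and interchange the order of integration. Since $u\in L^1_\loc(\R^n)$, the inner integral in $y$ is bounded by $\eps^{-n}\int_{B_\eps(x)}|u|<+\infty$ uniformly in $\sigma\in(0,1)$, and $\tilde\rho'\in L^1(0,1)$, so that Fubini's theorem gives
\[
(\rho_\eps*u)(x)
=
\int_0^1(-\tilde\rho'(\sigma))\,|B_1|\,\sigma^n
\left(\mint{-}_{B_{\eps\sigma}(x)}u(y)\di y\right)\di\sigma .
\]
Then I would let $\eps\to0^+$ by dominated convergence on the interval $(0,1)$: for each fixed $\sigma\in(0,1)$ we have $\eps\sigma\to0^+$, so that $\mint{-}_{B_{\eps\sigma}(x)}u\to u^\star(x)$ because $x\in\rap_u$; and since the map $r\mapsto\mint{-}_{B_r(x)}u$ has a finite limit as $r\to0^+$, it is bounded by some constant $M$ on a right neighborhood $(0,r_0)$ of $0$, so that for $\eps<r_0$ the integrand is dominated by $M\,|B_1|\,|\tilde\rho'(\sigma)|\in L^1(0,1)$. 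Passing to the limit and then integrating by parts (the boundary terms vanish because $\tilde\rho(1)=0$ and $\sigma^n=0$ at $\sigma=0$), we conclude
\[
\lim_{\eps\to0^+}(\rho_\eps*u)(x)
=
u^\star(x)\int_0^1(-\tilde\rho'(\sigma))\,|B_1|\,\sigma^n\di\sigma
=
u^\star(x)\,\Haus{n-1}(\partial B_1)\int_0^1\sigma^{n-1}\tilde\rho(\sigma)\di\sigma
=
u^\star(x),
\]
the last equality being the normalization of $\rho$ recorded above.

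I do not expect any genuine obstacle here: this is the standard argument showing that mollifications converge at Lebesgue points, adapted so as to invoke merely the existence of the limit of the ball averages. The only two points that deserve a little care are the use of Fubini's theorem, which is ensured by the local integrability of $u$, and the use of dominated convergence, whose dominating function is produced precisely by the boundedness near the origin of the averages of $u$ on balls centered at $x$---which is exactly the content of $x\in\rap_u$.
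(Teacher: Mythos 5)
Your proof is correct and follows essentially the same strategy as the paper's: both decompose the radial mollifier into a superposition of normalized indicators of the balls $B_{\eps\sigma}(x)$, interchange the order of integration, and conclude by dominated convergence using only the existence (hence local boundedness) of the ball averages at $x\in\rap_u$, together with the normalization of $\rho$. The one minor difference is that you obtain the decomposition directly from the fundamental theorem of calculus applied to the radial profile, whereas the paper derives it from Cavalieri's formula and therefore must first treat a strictly decreasing profile and then reduce the general case to a difference of two strictly decreasing functions; your variant avoids that case distinction.
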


\begin{proof}
In view of~\eqref{eq:kernel}, we can write $\rho(x)=\eta(|x|)$ for all $x\in\R^n$, where $\eta\in C^\infty_c([0,+\infty))$ is such that $\eta\ge0$ and $\supp\eta\subset[0,1)$. In particular, $\eta$ is absolutely continuous on $[0, 1]$. Hence, if $\eta$ is strictly decreasing on its support, we exploit Cavalieri's formula to get
\begin{equation*}
\begin{split}
(\rho_{\eps}*u)(x) 
&= 
\eps^{-n}
\int_{0}^{+\infty}\int_{\set*{y \in B_\eps\,:\,\rho(y / \eps) > t }} u(x - y)\di y\di t 
\\
&= 
-\eps^{-n-1}
\int_{0}^{\eps} \eta' \left(\tfrac{s}{\eps}\right)\int_{\set*{y\in B_\eps\,:\,\eta(|y|/ \eps) > \eta(s / \eps) }} u(x - y) \di y \di s 
\\ 
&= 
-\eps^{-n-1}
\int_{0}^{\eps}\eta'\left(\tfrac{s}{\eps}\right) \int_{B_s} u(x-y)\di y \di s
\\ 
&= 
- \int_{0}^{1}\eta'(r)\,\omega_{n} r^{n} \mint{-}_{B_{r \eps}(x)} u(y) \di y \di r. 
\end{split}
\end{equation*}
If $x\in\rap_u$, then we can apply the Dominated Convergence Theorem to get 
\begin{equation*}
\lim_{\eps\to0^+}
(\rho_\eps*u)(x)
=
- \int_{0}^{1}\eta'(r)\,\omega_{n} r^{n}\left(\lim_{\eps\to0^+}\mint{-}_{B_{r \eps}(x)} u(y) \di y \right) \di r
=
u^\star(x),
\end{equation*}
as we easily recognize that 
\begin{equation*}
-\int_{0}^{1} \eta'(r)\,\omega_{n}r^{n} \di r
=
n \omega_{n} \int_{0}^{1} \eta(r) r^{n - 1} \di r
=
\int_{B_1} \rho(x) \di x
=1.
\end{equation*}
In the general case, since $\eta$ is absolutely continuous on $[0,1]$, we can write $\eta=\eta_1-\eta_2$ on $[0,1]$, where $\eta_1, \eta_2 \colon [0, 1] \to [0, + \infty)$ are strictly decreasing  absolutely continuous functions such that $\eta_1(1)=\eta_2(1)$.
The conclusion hence follows by performing analogous computations involving Cavalieri's formula on $\eta_1$ and $\eta_2$ separately, and then by exploiting the linearity of the derivative.
\end{proof}

\subsection{Proof of \texorpdfstring{\cref{res:chichi}}{Corollary 1.4}}

The validity of~\eqref{eq:NL_grad_chi_E_repr} immediately follows from~\eqref{eq:conditional_leibniz} in \cref{res:conditional_leibniz}, since $\chi_E\,\chi_E=\chi_E\in BV^{\alpha,\infty}_\loc(\R^n)$.
In particular, $\overline{\chi_E} \in L^\infty(\R^n, |D^\alpha \chi_E|)$ given by \cref{res:conditional_leibniz} is uniquely determined by~\eqref{eq:NL_grad_chi_E_repr}. 
The validity of~\eqref{eq:bar_E} follows from~\eqref{eq:bar} and by the construction in the proof of \cref{res:conditional_leibniz}. 
Finally, if $\chi_E \in BV^{\alpha,1}(\R^n)$, then \eqref{eq:NL_grad_chi_E_repr} easily implies that $D^\alpha_\NL(\chi_E, \chi_E) \in \M(\R^n; \R^n)$. We let $\eta \in C^{\infty}_c(B_2)$ be such that $\eta \equiv 1$ on $B_1$ and set $\eta_k(x) = \eta \left (\frac{x}{k} \right )$ for $k\in\N$ and $x\in\R^n$. By \cref{def:weak_NL_grad} with $\varphi_k = \eta_k {\rm e}_j$ for $j \in \{1, \dots, n\}$ and~\cite{Comi-Stefani22-L}*{Cor.~2.7}, we get
\begin{align*}
\left |\int_{\R^n} \eta_k\, {\rm e}_j \cdot \di D^\alpha_{\NL}(\chi_E, \chi_E)\right| 
& = 
\left | \int_{\R^n}  \chi_E\, {\rm e}_j \cdot \nabla^{\alpha}_{\rm NL}(\chi_E, \eta_k) \di x \right | 
\\
& \le 2 \mu_{n,\alpha} |E| \|\nabla^{\alpha}_{\NL}(\chi_E, \eta_k)\|_{L^\infty(\R^n; \,\R^n)} 
\\
& \le 2 \mu_{n,\alpha} |E| [\eta_k]_{B^\alpha_{\infty, 1}(\R^n)} 
\\
& = 
2 \mu_{n,\alpha} |E| [\eta]_{B^\alpha_{\infty, 1}(\R^n)} k^{-\alpha} \to 0 \ \text{ as } k \to + \infty.
\end{align*}
Thus, by the Dominated Convergence Theorem, we obtain
\begin{equation*}
{\rm e}_j \cdot D^\alpha_{\NL}(\chi_E, \chi_E)(\R^n) 
= 
\lim_{k \to + \infty} \int_{\R^n} \eta_k {\rm e}_j \cdot \di D^\alpha_{\NL}(\chi_E, \chi_E) = 0 \ \text{ for all } j \in \{1, \dots, n\},
\end{equation*}
which implies $D^\alpha_{NL}(\chi_E, \chi_E)(\R^n) = 0$. Consequently, since $D^\alpha \chi_E(\R^n) = 0$ by~\cite{Comi-Stefani22-L}*{Lem.~2.5}, we can integrate \eqref{eq:NL_grad_chi_E_repr} over $\R^n$ to get
\begin{equation*}
0 = D^\alpha_{\NL}(\chi_E, \chi_E)(\R^n) = \int_{\R^n} (1 - 2\,\overline{\chi_E}) \di D^\alpha \chi_E = - 2 \int_{\R^n} \overline{\chi_E} \di D^\alpha \chi_E,
\end{equation*}
proving~\eqref{eq:zero_measure_R_n_NL} and ending the proof.
\qed

\subsection{Proof of \texorpdfstring{\cref{res:convergence}}{Theorem 1.6}}

We prove~\eqref{item:weak_conv_blow_up}, \eqref{item:weak_conv_blow_up_tot_var} and~\eqref{item:conv_blow_up_tot_var_ball} separately.

\vspace*{1ex}

\textit{Proof of~\eqref{item:weak_conv_blow_up}}. 
Up to extracting a subsequence, we can also assume that $\chi_{\frac{E - x}{r_{k}}} \to \chi_F$ a.e.~in $\R^{n}$ as $r_k\to0^+$.
Hence, by the Dominated Convergence Theorem, we get
\begin{equation*}
\int_{\R^{n}} \chi_{\frac{E - x}{r_{k}}} \, \div^{\alpha}\varphi \di x 
\to 
\int_{\R^{n}} \chi_{F} \, \div^{\alpha} \varphi \di x
\quad
\text{as}\ r_k\to+\infty
\end{equation*}
for any $\varphi \in \Lip_{c}(\R^{n}; \R^{n})$, since $\div^{\alpha} \phi \in L^{1}(\R^{n})$ by \cite{Comi-Stefani19}*{Cor.~2.3}.
Thanks to the density of $\Lip_{c}(\R^{n}; \R^{n})$ into $C_{c}(\R^{n}; \R^{n})$ with respect to the uniform convergence, we infer~\eqref{item:weak_conv_blow_up}.

\vspace*{1ex}

\textit{Proof of~\eqref{item:weak_conv_blow_up_tot_var}}.
Given $\varphi \in C_{c}(\R^{n})$, we can estimate 
\begin{equation}
\label{eq:ciccia}
\begin{split}
\bigg| 
\int_{\R^{n}} \phi \di |D^{\alpha} \chi_{\frac{E - x}{r_{k}}}| 
&- 
\int_{\R^{n}} \phi \di |D^{\alpha} \chi_{F}| 
\bigg| 
\le 
{r_{k}^{\alpha-n}}
\int_{\R^{n}} \left|\varphi\left ( \tfrac{y - x}{r_{k}} \right )\right| 
|\nu_{E}^{\alpha}(y) - \nu_{E}^{\alpha}(x) | \di |D^{\alpha} \chi_{E}|(y)  
\\
& + 
\left|
\int_{\R^{n}} \varphi \big ( \nu_{E}^{\alpha}(x)\cdot \di D^{\alpha} \chi_{\frac{E- x}{r_{k}}} - \nu_{F}^{\alpha} \cdot \di D^{\alpha} \chi_{F} \big )
\right|
\end{split}
\end{equation}
On the one side, since $x \in \redb^{\alpha} E$, by~\cite{Comi-Stefani19}*{Th.~5.3} there are $A_{n, \alpha} > 0$ and $r_{x} > 0$ such that
\begin{equation}
\label{eq:ciop}
|D^{\alpha} \chi_{E}|(B_{r}(x)) \le A_{n, \alpha} r^{n - \alpha}
\quad
\text{for all}\
r \in (0, r_{x}).
\end{equation}
Hence, letting $R>0$ be such that $\supp\varphi \subset B_{R}$, we can exploit~\eqref{eq:ciop} to estimate the first term in the right-hand side of~\eqref{eq:ciccia} as
\begin{equation}
\label{eq:spesa}
\begin{split}
{r_{k}^{\alpha-n}}
\int_{\R^{n}} &\left|\varphi\left ( \tfrac{y - x}{r_{k}} \right )\right| 
|\nu_{E}^{\alpha}(y) - \nu_{E}^{\alpha}(x) | \di |D^{\alpha} \chi_{E}|(y)  
\\
&\le
C_{n, \alpha, R} 
\,
\mint{-}_{B_{r_{k} R}(x)} \left | \varphi\left(\tfrac{y - x}{r_{k}} \right ) \right | |\nu_{E}^{\alpha}(y) - \nu_{E}^{\alpha}(x)| \di |D^{\alpha} \chi_{E}|(y)
\\
& \le \|\varphi\|_{L^{\infty}(\R^{n})} \left ( \mint{-}_{B_{r_{k} R}(x)} |\nu_{E}^{\alpha}(y) - \nu_{E}^{\alpha}(x)|^{2} \di |D^{\alpha} \chi_{E}|(y) \right )^{\frac{1}{2}}\\
& =  \|\varphi\|_{L^{\infty}(\R^{n})} \left ( \mint{-}_{B_{r_{k} R}(x)} 2( 1 - \nu_{E}^{\alpha}(y) \cdot \nu_{E}^{\alpha}(x)) \di |D^{\alpha} \chi_{E}|(y) \right )^{\frac{1}{2}}
\end{split}
\end{equation} 
for all $r_k>0$ sufficiently small by Jensen's inequality, where $C_{n,\alpha,R}>0$ does not depend on~$k$.
Again since $x\in\redb^\alpha E$, we have 
\begin{equation} \label{eq:cip}
\nu_{E}^{\alpha}(x)
=
\lim_{r \to 0^+}
\mint{-}_{B_{r}(x)} \nu_{E}^{\alpha}(y) \di |D^{\alpha} \chi_{E}|(y).
\end{equation}
Therefore, by combining~\eqref{eq:spesa} with~\eqref{eq:cip}, we conclude that 
\begin{equation}
\label{eq:ping}
{r_{k}^{\alpha-n}}
\int_{\R^{n}} \left|\varphi\left ( \tfrac{y - x}{r_{k}} \right )\right| 
|\nu_{E}^{\alpha}(y) - \nu_{E}^{\alpha}(x) | \di |D^{\alpha} \chi_{E}|(y)
\to 0
\quad
\text{as}\ r_k\to0^+. 
\end{equation}
On the other side, by \cite{Comi-Stefani19}*{Prop.~5.9}, we have $\nu_{F}^{\alpha} = \nu_{E}^{\alpha}(x)$ $|D^{\alpha} \chi_{F}|$-a.e.\ in~$\R^{n}$.
Hence, in virtue of~\eqref{item:weak_conv_blow_up}, the second term in the right-hand side of~\eqref{eq:ciccia} satisfies
\begin{equation}
\label{eq:pong}
\begin{split}
\bigg|
\int_{\R^{n}} \varphi(y) \big (& \nu_{E}^{\alpha}(x)\cdot \di D^{\alpha} \chi_{\frac{E-x}{r_{k}}}(y) - \nu_{F}^{\alpha}(y) \cdot \di D^{\alpha} \chi_{F}(y) \big )
\bigg|
\\
&=
\left | \int_{\R^{n}}\varphi\, \nu_{E}^{\alpha}(x) \cdot \big( \di D^{\alpha} \chi_{\frac{E- x}{r_{k}}} - \di D^{\alpha} \chi_{F} \big) \right |
\to0^+
\quad
\text{as}\ r_k\to0^+
\end{split}
\end{equation} 
possibly passing to a further subsequence. 
Thus~\eqref{item:weak_conv_blow_up_tot_var} follows from~\eqref{eq:ciccia}, \eqref{eq:ping} and~\eqref{eq:pong}.

\vspace*{1ex}

\textit{Proof of~\eqref{item:conv_blow_up_tot_var_ball}}.
%By~\cite{AFP00}*{Prop.~1.62}, 
Points~\eqref{item:weak_conv_blow_up} and~\eqref{item:weak_conv_blow_up_tot_var} implies~\eqref{item:conv_blow_up_tot_var_ball} for all $R > 0$ such that $|D^{\alpha} \chi_{F}|(\partial B_{R}) = 0$.
Since $|D^{\alpha} \chi_{F}|(\partial B_{R}) = 0$ for a.e.\ $R > 0$ 
(by~\cite{AFP00}*{Exam.~1.63} for instance), 
the validity of~\eqref{item:conv_blow_up_tot_var_ball} immediately follows.
\qed

\subsection{Proof of \texorpdfstring{\cref{res:null_derivativel}}{Proposition 1.8}}

Assume $D^\alpha_i f=0$. Let $\phi\in C^\infty_c(\R^n)$ and set $\psi=(-\Delta)^{\frac{1-\alpha}2}\phi$. 
By~\cite{Comi-Stefani19}*{Lem.~3.28(ii)}, we have $\psi\in S^{\alpha,1}(\R^n)$ (i.e., $\psi\in BV^{\alpha,1}(\R^n)$ with $|D^\alpha\psi|\ll\Leb{n}$, see~\cite{Comi-Stefani19}*{Sec.~3.9} for an account) with $\nabla^\alpha\psi=\nabla\phi$. 
Hence, by~\cite{Comi-Stefani19}*{Th.~3.23}, we can find $(\psi_k)_{k\in\N}\subset C^\infty_c(\R^n)$ such that $\psi_k\to\psi$ in~$S^{\alpha,1}(\R^n)$ as $k\to+\infty$. 
Thus, we have
\begin{equation*}
\int_{\R^n}f\,\de_i\phi\di x
=
\int_{\R^n}f\,\de^\alpha_i\psi\di x
=
\lim_{k\to+\infty}
\int_{\R^n}f\,\de^\alpha_i\psi_k\di x
=
\lim_{k\to+\infty}
\int_{\R^n}\psi_k\,\di D_i^\alpha f=0,
\end{equation*} 
from which we readily get $D_if=0$. 
Viceversa, assume $D_if=0$ and suppose $f\in\Lip_b(\R^n)$ at first.
Then we can write $f(x)=g(\hat x_i)$ for all $x\in\R^n$ for some $g\in\Lip_b(\R^{n-1})$. 
Thus, by~\cite{Comi-Stefani22-A}*{Lem.~2.3}, we can compute
\begin{equation*}
\begin{split}
\nabla^{\alpha}_i f(x) 
&= 
\mu_{n, \alpha} \lim_{\eps \to 0^+} \int_{\{ |y| > \eps \}} \frac{y_if(x + y)}{|y|^{n + \alpha + 1}} 
\di y
\\
&=
\mu_{n, \alpha} 
\lim_{\eps \to 0} 
\int_{\R^{n-1}} g(\hat{x}_i + \hat{y}_i) \int_{\set*{ |y_i|^2 > (\eps^2 - |\hat{y}_i|^2)^+}} \frac{y_i}{(y_i^2 + |\hat{y}_i|^2)^{\frac{n + \alpha + 1}{2}}} \di y_i \di \hat{y}_i = 0
\end{split}
\end{equation*}
for all $x\in\R^n$, so that $D^\alpha_i f=0$. 
If now $f\in BV^{\alpha,\infty}_\loc(\R^n)$, then $f_\eps=\rho_\eps*f\in\Lip_b(\R^n)$ for all $\eps>0$. 
Since $D_if=0$ by assumption, also $D_if_\eps=\rho_\eps*D_if=0$ for all $\eps>0$, and thus
\begin{equation*}
\int_{\R^n}\phi\di D^\alpha_if
=
\lim_{\eps\to0^+}
\int_{\R^n}\rho_\eps*\phi\di D^\alpha_if
=
\lim_{\eps\to0^+}
\int_{\R^n}\phi\di D^\alpha_if_\eps
=0
\end{equation*}
by the Dominated Convergence Theorem for all $\phi\in C^\infty_c(\R^n)$, so that $D^\alpha_i f=0$.
\qed

\subsection{Proof of \texorpdfstring{\cref{res:splitting}}{Proposition 1.9}}

Since $D_1f=0$, there exists $g\in L^\infty(\R^{n-1})$ such that $f((t,x))=g(x)$ for a.e.\ $t\in\R$ and a.e.\ $x\in\R^{n-1}$.
Now let us assume that $f\in \Lip_b(\R^n)$ at first, so that also $g\in\Lip_b(\R^{n-1})$.
By~\cite{Comi-Stefani22-A}*{Lem.~2.3}, we can write 
\begin{align*}
(\nabla^{\alpha}_{\R^{n-1}})_i g(x)
& = 
\mu_{n-1, \alpha} 
\int_{\R^{n-1}}\frac{(g(z) - g(x))\,(z_i-x_i)}{|z - x|^{n + \alpha}} \di z 
\\
& = 
\mu_{n, \alpha} 
\,\frac{\Gamma \left ( \frac{n+\alpha}{2} \right ) \sqrt{\pi}}{\Gamma \left ( \frac{n+\alpha+1}{2} \right)} 
\int_{\R^{n-1}}\frac{(g(z) - g(x))\,(z_i-x_i)}{|z - x|^{n + \alpha}} \di z 
\\
& = 
\mu_{n, \alpha} 
\int_{\R^{n-1}} (g(z) - g(x))\,(z_i-x_i) \int_{\R} \frac{1}{( t^2 + |z - x|^2)^{\frac{n + \alpha + 1}{2}}}\di t \di z
\\
& = 
\mu_{n, \alpha} 
\int_{\R^{n-1}} \int_{\R} \frac{(f((t+s,z)) - f((s,x)))\,(z_i-x_i)}{( t^2 + |z - x|^2)^{\frac{n + \alpha + 1}{2}}}\di t \di z
\\
& = 
\mu_{n, \alpha} 
\int_{\R^{n}} \frac{(f(y) - f((s,x)))\,(y_i-(s,x)_i)}{|y-(s,x)|^{n + \alpha + 1}}\di y
=
(\nabla^{\alpha}_{\R^n})_i f((s,x))
\end{align*}
for all $x\in\R^{n-1}$, $s\in\R$ and $i=2,\dots,n$, where $\nabla^\alpha_{\R^m}$ denotes the operator~\eqref{eq:def_nabla_alpha} taken in the ambient space~$\R^m$, $m\in\N$.
In the above chain of equalities, we exploited the fact that, by the properties of the Gamma and Beta functions, for $u=|z-x|$ and $s=n+\alpha$, 
\begin{equation*}
u^s\int_{\R} \frac{\di t}{(t^2 + u^2)^{\frac{s + 1}{2}}}
=
2
\int_0^{+\infty}\frac{\di t}{(t^2 + 1)^{\frac{s + 1}{2}}}
\overset{[1+t^2=\frac1r]}{=}
\int_0^1 r^{\frac s2-1}(1-r)^{\frac12-1}\di r
=
\frac{\Gamma\left(\frac s2\right)\,\sqrt\pi}{\Gamma\left(\frac{s+1}2\right)}.
\end{equation*}
By \cref{res:null_derivativel}, $D^\alpha_{1, \R^n} f = 0$, and so $D_1\nabla^{\alpha}_{\R^n} f=0$.
Now let $f\in BV^{\alpha,\infty}_\loc(\R^n)$ and set $f_\eps=\rho_\eps*f$ for all $\eps>0$.
Then $f_\eps\in\Lip_b(\R^n)$ with $\nabla^\alpha_{\R^n}f_\eps=\rho_\eps*D^\alpha_{\R^n}f$ for all $\eps>0$.
Since $D_1f_\eps=0$, there is  $g_\eps\in\Lip_b(\R^{n-1})$ such that $f_\eps((t,x))=g_\eps(x)$ for all $t\in\R$ and $x\in\R^{n-1}$ and $g_\eps\to g$ a.e.\ in~$\R^{n-1}$ as $\eps\to0^+$.
Thus
\begin{equation*}
(\nabla^{\alpha}_{\R^{n-1}})_i g_\eps(x)
=
(\nabla^{\alpha}_{\R^n})_i f_\eps((s,x))
\end{equation*}
for all $x\in\R^{n-1}$, $s\in\R$, $i\in\set*{2,\dots,n}$ and $\eps>0$, thanks to \cref{res:smoothing}.
Note that $D_1D^\alpha_{\R^n} f=0$.
Indeed, since $D_1\nabla^\alpha_{\R^n} f_\eps=0$ for all $\eps>0$, we have 
\begin{equation*}
0=
\lim_{\eps\to0^+}
\int_{\R^n}D_1\psi\,\nabla^\alpha_{\R^n}f_\eps\di x
=
\lim_{\eps\to0^+}
\int_{\R^n}\rho_\eps*D_1\psi\,\di D^\alpha_{\R^n}f
=
\int_{\R^n}D_1\psi\,\di D^\alpha_{\R^n}f
\end{equation*}
for all $\psi\in C^\infty_c(\R^n)$ by the Dominated Convergence Theorem.
Now, given $\phi\in C^\infty_c(\R^{n-1})$ and $\sigma\in C^\infty_c(\R)$, setting $\psi(y)=\sigma(y_1)\,\phi(\hat y_1)$ for all $y\in\R^n$, we have $\psi\in C^\infty_c(\R^n)$ and so
\begin{align*}
\left(\int_\R\sigma\di t\right) \int_{\R^{n-1}}\phi\,(\nabla^\alpha_{\R^{n-1}})_ig_\eps\di x
&=
\left(\int_\R\sigma\di t\right) \int_{\R^{n-1}}\phi(x)\,(\nabla^\alpha_{\R^{n}})_if_\eps((0,x))\di x
\\
&=
\int_\R\sigma(t)\int_{\R^{n-1}}\phi(x)\,(\nabla^\alpha_{\R^{n}})_if_\eps((0,x))\di x\di t
\\
&=
\int_\R\sigma(t)\int_{\R^{n-1}}\phi(x)\,(\nabla^\alpha_{\R^{n}})_if_\eps((t,x))\di x\di t
\\
&=
\int_{\R^{n}}\sigma(y_1)\,\phi(\hat y_1)\,(\nabla^\alpha_{\R^{n}})_if_\eps(y)\di y
\\
&=
\int_{\R^{n}}\psi\,(\nabla^\alpha_{\R^{n}})_if_\eps\di y
\end{align*}
for all $\eps>0$ and $i\in\set*{2,\dots,n}$.
By the Dominated Convergence Theorem, we have
\begin{equation*}
\lim_{\eps\to0^+}
\int_{\R^{n-1}}\phi\,(\nabla^\alpha_{\R^{n-1}})_ig_\eps\di x
=
-
\lim_{\eps\to0^+}
\int_{\R^{n-1}}g_\eps\,(\nabla^\alpha_{\R^{n-1}})_i\phi\di x
=
-
\int_{\R^{n-1}}g\,(\nabla^\alpha_{\R^{n-1}})_i\phi\di x
\end{equation*}
and, similarly,
\begin{equation*}
\lim_{\eps\to0^+}
\int_{\R^{n}}\psi\,(\nabla^\alpha_{\R^{n}})_if_\eps\di y
=
\int_{\R^{n}}\psi\di(D^\alpha_{\R^{n}})_if.
\end{equation*}
We thus conclude that 
\begin{equation*}
- \left(\int_\R\sigma\di t\right) \int_{\R^{n-1}}g\,(\nabla^\alpha_{\R^{n-1}})_i\phi\di x
=
\int_{\R^{n}}\sigma(y_1)\,\phi(\hat y_1)\di(D^\alpha_{\R^{n}})_if(y)
\end{equation*}
for all $i\in\set*{2,\dots,n}$, $\phi\in C^\infty_c(\R^n)$ and $\sigma\in C^\infty_c(\R)$.
Hence $g\in BV^{\alpha,\infty}_\loc(\R^{n-1})$, with
\begin{equation*}
\int_{\R^{n}}\sigma(y_1)\,\phi(\hat y_1)\di(D^\alpha_{\R^{n}})_if(y)
=
\left(\int_\R\sigma\di t\right)
\left(\int_{\R^{n-1}}\phi\di (D^\alpha_{\R^{n-1}})_ig\right)
\end{equation*} 
for all $i\in\set*{2,\dots,n}$, $\phi\in C^\infty_c(\R^n)$ and $\sigma\in C^\infty_c(\R)$, yielding the conclusion. 
\qed

\subsection{Proof of \texorpdfstring{\cref{res:blowup}}{Theorem 1.7}}

By~\cite{Comi-Stefani19}*{Prop.~5.9}, we have $\chi_F\in BV^{\alpha,\infty}_\loc(\R^n)$ with $\nu^\alpha_F=\e_n$ $|D^\alpha\chi_F|$-a.e.\ in~$\R^n$.
Hence $D^\alpha_i\chi_F=0$ for all $i=1,\dots,n-1$ and $D^\alpha_n\chi_F\ge0$.
By \cref{res:null_derivativel}, we infer that also $D_i\chi_F=0$ for all $i=1,\dots,n-1$. 
Consequently,  $F=\R^{n-1}\times M$ for some measurable $M\subset\R$. 
We now prove the properties of the set~$M$.

\vspace*{1ex}

\textit{Proof of~\eqref{item:blowup_M_BV}}.
By repeatedly applying \cref{res:splitting}, we get $\chi_M\in BV^{\alpha,\infty}_{\loc}(\R)$, with
\begin{equation*}
(D^\alpha_{\R^n})_n\chi_F
=
\Leb{n-1}\otimes D^\alpha_{\R}\chi_M
=
\Leb{n-1}\otimes \de^\alpha\chi_M
\quad
\text{in}\
\M_{\loc}(\R^n),
\end{equation*}
from which we readily deduce that $\de^\alpha\chi_M\ge 0$. 

\vspace*{1ex}

\textit{Proof of~\eqref{item:blowup_M_meas_infty}}.
If $|M|\in(0,+\infty)$ by contradiction, then $\chi_M\in BV^{\alpha,1}_{\loc}(\R)$ and thus we obtain $u=I_{1-\alpha}\chi_M\in BV_\loc(\R)$ with $\de u=\de^\alpha\chi_M$, arguing exactly as in~\cite{Comi-Stefani19}*{Lem.~3.28(i)}.
Hence $\de u\ge0$ and thus $u$ is a non-negative and non-decreasing function. 
Moreover, since $|M|<+\infty$, we have $u\in L^p(\R)$ for all $p\in\left(\frac{1}{\alpha},+\infty\right)$, which immediately yields $u\equiv0$, so that $|M|=0$, a contradiction.
Hence $|M|\in\set*{0,+\infty}$ and, since $F^c \in \tang(E^c,x)$, we also get that $|M^c|\in\set*{0,+\infty}$ by a symmetrical argument.

\vspace*{1ex}

\textit{Proof of~\eqref{item:blowup_M_esssup}}.
Let $|M| = +\infty$ and assume $b=\esssup M<+\infty$ by contradiction.
Let $I_b=(b,b+1)$. 
By~\eqref{item:blowup_M_BV} and~\eqref{eq:GG_fract}, we can compute
\begin{equation*}
0\le \de^\alpha\chi_M(I_b)
=
-\int_M\nabla^\alpha\chi_{I_b}\di t.
\end{equation*}
By~\cite{Comi-Stefani19}*{Exam.~4.11}, $\nabla^\alpha\chi_{I_b}(t) > 0$ for all $t<b$, forcing $|M|=0$, which is a contradiction.

\vspace*{1ex}
\textit{Proof of \eqref{item:blowup_M_intervals}}.  
Let $M\ne\emptyset,\R$ be such that $P(M)<+\infty$.
Then, up to negligible sets,  $M=\cup_{k=1}^N I_k$ for $N\in\N$ closed intervals $I_k\subset\R$ with $a_k=\inf I_k<\sup I_k=b_k$, $a_k,b_k\in[-\infty,+\infty]$ and $\sup I_k<\inf I_{k+1}$ for all $k=1,\dots,N-1$.
Let us assume that $N\ge2$.
Since $|M|=+\infty$ by~\eqref{item:blowup_M_meas_infty}, we must have $b_N=+\infty$ by~\eqref{item:blowup_M_esssup}.
Since also $|M^c|=+\infty$ by~\eqref{item:blowup_M_meas_infty}, we must have $a_1>-\infty$.
In particular, $I_k$ is a compact interval for all $k=1,\dots,N-1$. 
By linearity and in virtue of~\cite{Comi-Stefani19}*{Exam.~4.11}, we have $\de^\alpha\chi_M=\nabla^\alpha\chi_M\,\Leb{1}$, with
\begin{equation*}
\nabla^\alpha\chi_M(t)
=
\sum_{k=1}^N
\nabla^\alpha\chi_{I_k}
=
c_\alpha\sum_{k=1}^{N-1}\left(|t-a_k|^{-\alpha}-|t-b_k|^{-\alpha}\right)
+
c_\alpha|t-a_N|^{-\alpha}
\end{equation*}
for all $t\in\R$ with $t\ne a_1,b_1,\dots,a_{N-1},b_{N-1},a_N$, where $c_\alpha>0$ depends on~$\alpha$ only. 
Since $N\ge2$, we have $\nabla^\alpha\chi_M(t)<0$ in an open neighborhood of $b_1$,
contradicting~\eqref{item:blowup_M_BV}.
We thus must have $N=1$ and so $M=(a_1,+\infty)$ for some  $a_1\in\R$, concluding the proof.
\qed

\subsection{Proof of \texorpdfstring{\cref{res:supp_sing}}{Theorem 1.10}}

Let $\chi_F\in W^{\alpha,1}(\R^n)$. 
By \cref{res:cap_W}, we have $\chi_{E\cap F}\in BV^{\alpha,\infty}_\loc(\R^n)$ with
$D^\alpha_\sing\chi_{E\cap F}=\chi_{F^1} D^\alpha_\sing\chi_E$
in $\M_\loc(\R^n;\R^n)$.
If $|E \cap F| = |F|$, then $\chi_{E \cap F} = \chi_{F}$ and so 
$
\chi_{F^1} |D^\alpha_\sing \chi_E|
=
|D^\alpha_\sing \chi_{E\cap F}| 
= 
|D^\alpha_\sing \chi_{F}| 
= 
0$.
If instead $|E \cap F| = 0$, then $\chi_{E\cap F} = 0$ and so again 
$
\chi_{F^1} |D^\alpha_\sing \chi_E|
=
|D^\alpha_\sing \chi_{E \cap F}| 
=0
$.
Therefore $|D^\alpha_\sing\chi_E|(F^1)=0$ whenever $|E^c\cap F| = 0$ or $|E \cap F| = 0$. Taking $F = B_r(x)$ for $x \in \R^n$ and $r > 0$, we get $\supp|D^\alpha_\sing\chi_E|\subset\de^- E$.
\qed

\subsection{Proof of \texorpdfstring{\cref{res:eff_bello}}{Theorem 1.12}}

We prove each statement separately.

\vspace*{1ex}

\textit{Proof of \eqref{item:eff_BV_frac}}.
Without loss of generality, we may assume that $x=0$ and $\nu^\alpha_E(0)=\e_n$.
By~\cite{Leonardi00}*{Prop.~2.1}, we must have $F=\lambda F$ for all $\lambda>0$. 
Due to \cref{res:blowup}, this implies that $F=\R^{n-1}\times M$ for some $M\subset\R$ such that $M=\lambda M$ for all $\lambda>0$.
Since $0\in\redb^\alpha_\eff E$, we must have $M\ne\emptyset,\R$.
As a consequence, $|M|=+\infty$ by \cref{res:blowup}\eqref{item:blowup_M_meas_infty}.
It is now plain to see that either $M=(0,+\infty)$ or $M=(-\infty,0)$, but the latter case is automatically excluded by points~\eqref{item:blowup_M_esssup} and~\eqref{item:blowup_M_intervals} of \cref{res:blowup}.  
We thus get that $F=H^+_{\e_n}(0)$, as claimed.
The remaining part of the statement is a simple application of~\cite{DelNin21}*{Def.~1.1 and Th.~1.2}.

\vspace*{1ex}

\textit{Proof of \eqref{item:eff_W}}.
By~\cite{Ponce-Spector20}*{Prop.~3.1}, we know that $\Haus{n - \alpha}(\partial^{*} E) = 0$. 
Since $\redb^\alpha_\eff E\subset\de^*E$ by \cref{def:eff}, we thus get that  $\Haus{n - \alpha}(\redb^\alpha_\eff E) = 0$ as well.

\vspace*{1ex}

\textit{Proof of \eqref{item:eff_BV}}.
Let $x \in \redb E$. 
By~\eqref{item:conv_blow_up_tot_var_ball} in \cref{res:convergence} and by \cref{res:halfspace}, we have
\begin{equation*}
\lim_{r \to 0^+} \frac{|D^{\alpha} \chi_{E}|(B_{rR}(x))}{(rR)^{n - \alpha}} 
= 
\lim_{r \to 0^+} 
|D^{\alpha} \chi_{\frac{E - x}{r}}|(B_{R}) 
= 
|D^{\alpha} \chi_{H^{+}_{\nu_{E}(x)}}|(B_{R})>0
\end{equation*}
for all $R > 0$. 
Hence, there exists $r_x >0$ such that
\begin{equation} \label{eq:non_zero_balls}
|D^{\alpha} \chi_{E}|(B_{\rho}(x)) > 0 
\quad 
\text{for any}\ 
\rho \in (0, r_x).
\end{equation}
Moreover, by~\eqref{item:weak_conv_blow_up} and~\eqref{item:weak_conv_blow_up_tot_var} in \cref{res:convergence} and by \cref{res:halfspace}, we have 
\begin{equation} \label{eq:fract_normal_classical_limit}
 \lim_{r \to 0^+} \frac{D^{\alpha} \chi_{E}(B_{r}(x))}{|D^{\alpha} \chi_{E}|(B_{r}(x))} =  \frac{ \displaystyle \int_{B_{1}} \nabla^{\alpha} \chi_{H^{+}_{\nu_{E}(x)}} \di y}{ \displaystyle \int_{B_{1}} |\nabla^{\alpha} \chi_{H^{+}_{\nu_{E}(x)}}| \di y} = \nu_{E}(x).
\end{equation}
The limits in~\eqref{eq:non_zero_balls} and~\eqref{eq:fract_normal_classical_limit} thus imply that $x \in \redb^{\alpha} E$ with $\nu^\alpha_E=\nu_E$ on~$\redb E$.
Since $\redb E \subset \de^{*} E$, we get $\redb E \subset \redb^{\alpha}_\eff E$.
The conclusion thus follows by recalling that $\Haus{n-1}(\de^*E)<+\infty$ and $\Haus{n-1}(\de^*E\setminus\redb E)=0$, see~\cite{Maggi12}*{Th.~16.2} for instance.
\qed

\subsection{Proof of \texorpdfstring{
\cref{res:ball}}{Proposition 1.14}}

Without loss of generality, we can assume $x_0=0$ and $r=1$.
By~\cite{Comi-Stefani19}*{Th.~3.18, Eq.~(3.26)} (applied to $f=\chi_{B_1}$), we have
\begin{equation*} 
\nabla^{\alpha} \chi_{B_{1}}(x) = - \frac{\mu_{n, \alpha}}{n + \alpha - 1} \int_{\partial B_{1}} \frac{y}{|x - y|^{n + \alpha - 1}} \di \Haus{n - 1}(y)
\end{equation*}
for all $x\in\R^n\setminus\de B_1$.
Changing variables, we easily get that 
\begin{align*} 
\int_{\partial B_{1}} \frac{y}{|x - y|^{n + \alpha - 1}} \di \Haus{n - 1}(y) 
= 
\frac{x}{|x|} \int_{\partial B_{1}} \frac{y_{1}}{||x| \e_{1} - y|^{n + \alpha - 1}} \di \Haus{n - 1}(y)
\end{align*}
for all $x\in\R^n$ with $|x|\ne0,1$, 
since
$ \displaystyle
\int_{\partial B_{1}} \frac{y_{i}}{||x| \e_{1} - y|^{n + \alpha - 1}} \di  \Haus{n - 1}(y) = 0 
$
for $i \in \{2, \dots, n\}$ by symmetry.
We also notice that 
\begin{align*} 
& \int_{\partial B_{1}} \frac{y_{1}}{||x| \e_{1} - y|^{n + \alpha - 1}} \di  \Haus{n - 1}(y) \\
& = 
\int_{\partial B_{1} \cap \{ y_{1} > 0 \}} y_{1} \left ( \tfrac{1}{((|x| - y_{1})^{2} + |\hat{y}_{1}|^{2})^{\frac{n + \alpha - 1}{2}}} - \tfrac{1}{((|x| + y_{1})^{2} + |\hat{y}_{1}|^{2})^{\frac{n + \alpha - 1}{2}}} \right ) \di \Haus{n - 1}(y) > 0
\end{align*}
for all $x\in\R^n$ with $|x|\ne0,1$. 
Hence, we can write
\begin{equation} \label{eq:fract_grad_ball} 
\nabla^{\alpha} \chi_{B_{1}}(x) = - \frac{\mu_{n, \alpha}}{n + \alpha - 1}\, g_{n, \alpha}(|x|)\, \frac{x}{|x|}
\end{equation}
for all $x\in\R^n$ with $|x|\ne0,1$,
where
\begin{equation*}
g_{n, \alpha}(t) = \int_{\partial B_{1}} \frac{y_{1}}{|t \e_{1} - y|^{n + \alpha - 1}} \di  \Haus{n - 1}(y) > 0 
\end{equation*}
for all $t\ge 0$.
We now claim that
\begin{equation} \label{eq:fract_normal_ball}
\nu^{\alpha}_{B_{1}}(x) = - \frac{x}{|x|}
\quad
\text{for all}\ x\ne0.
\end{equation}
Indeed, since
\begin{equation*}
\left | \frac{\displaystyle \int_{B_{r}(x)} \left ( \frac{y}{|y|} - \frac{x}{|x|} \right ) g_{n, \alpha}(|y|) \di y}{\displaystyle \int_{B_{r}(x)} g_{n, \alpha}(|y|) \di y} \right | \le \sup_{y \in B_{r}(x)} \left | \frac{y}{|y|} - \frac{x}{|x|} \right |,
\end{equation*}
by~\eqref{eq:fract_grad_ball} we have
\begin{equation*}
\lim_{r \to 0^+} \frac{\displaystyle \int_{B_{r}(x)} \nabla^{\alpha} \chi_{B_{1}}(y) \di y}{\displaystyle \int_{B_{r}(x)} |\nabla^{\alpha} \chi_{B_{1}}(y)| \di y} 
= 
-
\lim_{r \to 0} \frac{\displaystyle \int_{B_{r}(x)} \frac{y}{|y|}\, g_{n, \alpha}(|y|) \di y}{\displaystyle \int_{B_{r}(x)} g_{n, \alpha}(|y|) \di y}
=-\frac x{|x|},
\end{equation*}
proving~\eqref{eq:fract_normal_ball}. 
If $x = 0$, then
$ \displaystyle
\int_{B_{r}} \frac{y}{|y|} \,g_{n, \alpha}(|y|) \di y = 0
$
by symmetry, so that
\begin{equation*}
\lim_{r \to 0^+} \frac{\displaystyle \int_{B_{r}} \nabla^{\alpha} \chi_{B_{1}}(y) \di y}{\displaystyle \int_{B_{r}} |\nabla^{\alpha} \chi_{B_{1}}(y)| \di y} 
= 
\lim_{r \to 0^+} \frac{\displaystyle \int_{B_{r}} \frac{y}{|y|} g_{n, \alpha}(|y|) \di y}{\displaystyle \int_{B_{r}} g_{n, \alpha}(|y|) \di y} = 0.
\end{equation*}
Consequently, $\redb^{\alpha} B_{1} = \R^{n} \setminus \{0\}$, 
$\redb^{\alpha}_\eff B_{1} = \partial B_{1} = \redb B_{1}$ and $\nu^\alpha_{B_1}=\nu_{B_{1}}$ on $\partial B_{1}$.
\qed

%%% BIBLIO %%%

\begin{bibdiv}
\begin{biblist}

\bib{AFP00}{book}{
   author={Ambrosio, Luigi},
   author={Fusco, Nicola},
   author={Pallara, Diego},
   title={Functions of bounded variation and free discontinuity problems},
   series={Oxford Mathematical Monographs},
   publisher={The Clarendon Press, Oxford University Press, New York},
   date={2000},
%   pages={xviii+434},
%   isbn={0-19-850245-1},
   review={\MR{1857292}},
}

\bib{Brue-et-al20}{article}{
   author={Bru\`e, Elia},
   author={Calzi, Mattia},
   author={Comi, Giovanni E.},
   author={Stefani, Giorgio},
   title={A distributional approach to fractional Sobolev spaces and fractional variation: asymptotics II},
   journal={C. R. Math. Acad. Sci. Paris},
   volume={360},
   date={2022},
   pages={589--626},
%   issn={1631-073X},
   review={\MR{4449863}},
   doi={10.5802/crmath.300},
}

\bib{Comi-et-al21}{article}{
   author={Comi, Giovanni E.},
   author={Spector, Daniel},
   author={Stefani, Giorgio},
   title={The fractional variation and the precise representative of
   $BV^{\alpha,p}$ functions},
   journal={Fract. Calc. Appl. Anal.},
   volume={25},
   date={2022},
   number={2},
   pages={520--558},
%   issn={1311-0454},
   review={\MR{4437291}},
   doi={10.1007/s13540-022-00036-0},
}

\bib{Comi-Stefani19}{article}{
   author={Comi, Giovanni E.},
   author={Stefani, Giorgio},
   title={A distributional approach to fractional Sobolev spaces and
   fractional variation: existence of blow-up},
   journal={J. Funct. Anal.},
   volume={277},
   date={2019},
   number={10},
   pages={3373--3435},
%   issn={0022-1236},
   review={\MR{4001075}},
   doi={10.1016/j.jfa.2019.03.011},
}

\bib{Comi-Stefani22-A}{article}{
   author={Comi, Giovanni E.},
   author={Stefani, Giorgio},
   title={A distributional approach to fractional Sobolev spaces and fractional variation: asymptotics I},
   journal={Rev. Mat. Complut.},
   volume={36},
   date={2023},
   number={2},
   pages={491--569},
%   issn={1139-1138},
   review={\MR{4581759}},
   doi={10.1007/s13163-022-00429-y},
}

\bib{Comi-Stefani22-L}{article}{
   author={Comi, Giovanni E.},
   author={Stefani, Giorgio},
   title={Leibniz rules and Gauss-Green formulas in distributional
   fractional spaces},
   journal={J. Math. Anal. Appl.},
   volume={514},
   date={2022},
   number={2},
   pages={Paper No. 126312, 41},
%   issn={0022-247X},
   review={\MR{4422400}},
   doi={10.1016/j.jmaa.2022.126312},
}

\bib{Comi-Stefani23-Fail}{article}{
   author={Comi, Giovanni E.},
   author={Stefani, Giorgio},
   title={Failure of the local chain rule for the fractional variation},
   journal={Port. Math.},
   volume={80},
   date={2023},
   number={1-2},
   pages={1--25},
%   issn={0032-5155},
   review={\MR{4578331}},
   doi={10.4171/pm/2096},
}

\bib{Comi-Stefani23-Frac}{article}{
   author={Comi, Giovanni E.},
   author={Stefani, Giorgio},
   title={Fractional divergence-measure fields, Leibniz rule and Gauss-Green formula},
   journal={Boll. Unione Mat. Ital.},
   date={2023},
   doi={10.1007/s40574-023-00370-y},
}

\bib{DelNin21}{article}{
   author={Del Nin, Giacomo},
   title={Rectifiability of the jump set of locally integrable functions},
   journal={Ann. Sc. Norm. Super. Pisa Cl. Sci. (5)},
   volume={22},
   date={2021},
   number={3},
   pages={1233--1240},
%   issn={0391-173X},
   review={\MR{4334318}},
}

\bib{Leonardi00}{article}{
   author={Leonardi, Gian Paolo},
   title={Blow-up of oriented boundaries},
   journal={Rend. Sem. Mat. Univ. Padova},
   volume={103},
   date={2000},
   pages={211--232},
%   issn={0041-8994},
   review={\MR{1789540}},
}

\bib{Lombardini19}{article}{
   author={Lombardini, Luca},
   title={Fractional perimeters from a fractal perspective},
   journal={Adv. Nonlinear Stud.},
   volume={19},
   date={2019},
   number={1},
   pages={165--196},
%   issn={1536-1365},
   review={\MR{3912427}},
   doi={10.1515/ans-2018-2016},
}

\bib{Maggi12}{book}{
   author={Maggi, Francesco},
   title={Sets of finite perimeter and geometric variational problems},
   series={Cambridge Studies in Advanced Mathematics},
   volume={135},
%   note={An introduction to geometric measure theory},
   publisher={Cambridge University Press, Cambridge},
   date={2012},
%   pages={xx+454},
%   isbn={978-1-107-02103-7},
   review={\MR{2976521}},
   doi={10.1017/CBO9781139108133},
}

\bib{Ponce-Spector20}{article}{
   author={Ponce, Augusto C.},
   author={Spector, Daniel},
   title={A boxing inequality for the fractional perimeter},
   journal={Ann. Sc. Norm. Super. Pisa Cl. Sci. (5)},
   volume={20},
   date={2020},
   number={1},
   pages={107--141},
%   issn={0391-173X},
   review={\MR{4088737}},
}

\bib{Schonberger22}{article}{
   author={Sch{\"o}nberger, Hidde},
   date={2023},
   title={Extending linear growth functionals to functions of bounded fractional variation},
   journal={Proc. Roy. Soc. Edinburgh Sect. A},
   pages={1--24},
   doi={10.1017/prm.2023.14},
}

\bib{Silhavy20}{article}{
   author={\v{S}ilhav\'{y}, M.},
   title={Fractional vector analysis based on invariance requirements
   (critique of coordinate approaches)},
   journal={Contin. Mech. Thermodyn.},
   volume={32},
   date={2020},
   number={1},
   pages={207--228},
%   issn={0935-1175},
   review={\MR{4048032}},
   doi={10.1007/s00161-019-00797-9},
}

\end{biblist}
\end{bibdiv}

\end{document}